\newcommand{\iggr}[1]{{\color{black}{#1}}}
\newcommand{\nb}[1]{{\color{black}{#1}}}
\newtheorem{theorem}{Theorem}[section]
\newtheorem{assumption}[theorem]{Assumption}
\newtheorem{corollary}[theorem]{Corollary}
\newtheorem{definition}[theorem]{Definition}
\newtheorem{notation}[theorem]{Notation}
\newtheorem{example}[theorem]{Example}
\newtheorem{lemma}[theorem]{Lemma}
\newtheorem{proposition}[theorem]{Proposition}
\newcommand{\Cstab}{C_{\mathrm{stab}}}
\newcommand{\cinf}{c^-_{\inf}}
\newcommand{\tmop}[1]{\ensuremath{\operatorname{#1}}}
\newcommand{\tV}{\widetilde{V}}
\numberwithin{equation}{section}
\begin{document}


\title{Overlapping Schwarz methods with GenEO coarse spaces for
  indefinite and non-self-adjoint problems}

\author{Niall Bootland\protect\footnote{Department of Mathematics and
    Statistics, University of Strathclyde, Glasgow, G1 1XH, UK; {\tt
      niall.bootland@strath.ac.uk, work@victoritadolean.com}},
Victorita Dolean$^{\ast,}$\protect\footnote{Laboratoire
  J.A.~Dieudonn\'e, CNRS, University C\^ote d'Azur, Nice, 06108,
  France}, Ivan G. Graham\protect\footnote {Department of Mathematical
  Sciences, University of Bath, Bath, BA2 7AY, UK; {\tt i.g.graham@bath.ac.uk}},
  Chupeng Ma\protect\footnote{Institute for Applied Mathematics \&
    Interdisciplinary Center for Scientific Computing, Heidelberg
    University, 69120 Heidelberg, Germany; {\tt
      chupeng.ma@uni-heidelberg.de, r.scheichl@uni-heidelberg.de}}, Robert Scheichl$^{\ddag,\S}$}

\date{} 


\maketitle

\begin{abstract}
\noindent
GenEO (`Generalised Eigenvalue problems on the Overlap') is a method for computing an operator-dependent spectral coarse space to be combined with local solves on subdomains to form a robust parallel domain decomposition preconditioner for elliptic PDEs.  It has previously been proved, in the self-adjoint and positive-definite case, that this method, when used as a preconditioner for conjugate gradients, yields iteration numbers which are completely independent of the heterogeneity of the coefficient field of the partial differential operator. We extend this theory to the case of convection--diffusion--reaction problems, which may be non-self-adjoint and indefinite, and whose discretisations are solved with preconditioned GMRES. The GenEO coarse space is defined here using a generalised eigenvalue problem based on a self-adjoint and positive-definite subproblem. \iggr{We prove estimates on
GMRES iteration counts which are independent of the variation of the coefficient of the diffusion term in the operator and depend only very mildly on  variations of the other coefficients. These are proved  under the assumption that the subdomain diameter is sufficiently small and the eigenvalue tolerance for building the coarse space is sufficiently large.}  While the iteration number estimates do grow as the non-self-adjointness and indefiniteness of the operator increases, practical tests indicate the deterioration is much milder. Thus we obtain an iterative solver which is efficient in parallel and very effective for a wide range of convection--diffusion--reaction problems.
\end{abstract}

\noindent
{\bf MSC Classes:} {\tt 65N22, 65N55, 65F10}
\\[2ex]
\noindent
{\bf Keywords:} elliptic PDEs, finite element methods, domain decomposition, preconditioning, GMRES, convection--diffusion--reaction, spectral coarse space, robustness

\section{Introduction}\label{sec-1}

This paper is concerned with the theory and implementation of two-level domain decomposition preconditioners for finite element  systems arising from indefinite and/or non-self-adjoint elliptic boundary value problems with rough or highly variable coefficients.  We analyse and test a method which is scalable with respect to the number of subdomains and also robust to  heterogeneity, backed up by supporting theory. The method is developed for the following
scalar second-order (convection--diffusion--reaction) PDE with homogeneous Dirichlet boundary conditions:
\begin{subequations}
\label{eq:2-1}
\begin{align}
-\mathrm{div}( A \nabla u) + \mathbf{b}\cdot\nabla u + cu &= f & & \text{in } \Omega, \\
u &= 0 & & \text{on } \partial\Omega,
\end{align}
\end{subequations}
posed on a bounded  polygonal or Lipschitz  polyhedral domain $\Omega\subset \mathbb{R}^{d}$ ($d=2,3$). As well as a weak regularity assumption on the
coefficients $A, \mathbf{b}$ and $c$ (Assumption \ref{ass:1}), we will assume that
\eqref{eq:2-1}  has a unique weak solution $u\in H^1(\Omega)$ for all $f \in L^2(\Omega)$.

In overlapping domain  decomposition methods, the global domain $\Omega$ is covered by a set of
overlapping subdomains~$\Omega_i$, $i=1,\ldots N$,  and the  classical one-level additive Schwarz preconditioner
is built from partial solutions on each subdomain. Since this preconditioner is not in general scalable as the number of subdomains grows,  an  additional global  coarse solve  is usually  added to enhance scalability, as well as  robustness
with respect to coefficient heterogeneity.

\bigskip

\noindent {\bf Aim of the paper and related literature.}
The construction of suitable coarse spaces has enjoyed intensive recent  interest. Since classical
coarse spaces (piecewise polynomials on a coarse grid, e.g., \cite{cai,toselli}) are not
robust for highly heterogeneous problems,
a number of groups have developed operator-dependent coarse spaces
which are better adapted to the heterogeneity, an early example being \cite{ivanrob}.
Most recently,  attention has focussed on
`spectral' coarse spaces  which are built from selected modes of local generalised eigenvalue problems.
The first proposals of this technique for domain decomposition preconditioning  were \cite{galvis1,galvis2}.  The use of local spectral information is important in   the related but different context of robust approximation techniques
for multiscale PDE problems, e.g., \cite{babuska,efendiev,ma1,ma2}.

This paper focuses on the GenEO coarse space, first proposed in
\cite{spillane:2014} for    general coercive, self-adjoint systems of PDEs (e.g., \eqref{eq:2-1}
with $\mathbf{b} =0$ and $c \geq 0$).   In \cite{spillane:2014},  it was
rigorously proved that (when combined with a one-level method and using  preconditioned conjugate gradients as the iterative solver), the resulting algorithm enjoys not only
scalability with respect to  the number of subdomains, but also robustness with respect to coefficient variation.     Recent contributions in this very active field  include \cite{giraud, klawonn,peter} and \cite{spillane:2021}, while  more  complete lists of contributions can be found in the literature surveys in, e.g., \iggr{in the introductions to  \cite{spillane:2014} or  \cite{galvis:2018:ODD}.}

While all the methods cited so far were developed for SPD problems,  where the behaviour of the  local
eigenspaces  can be more readily  analysed, there has also been considerable (so far mostly empirical)  progress on applying related techniques to non-coercive problems. Most of this has been for
the Helmholtz problem at high frequency, although \cite{nataf:2021} contains \iggr{rigorous} recent work on saddle-point systems.
For Helmholtz, the first publications include \cite{conen:2014,bootland:2021:OTD}, while recent work in
\cite{bootland:2021:ACS} compares the performance of several spectral coarse spaces
for high-frequency Helmholtz problems in a high performance environment. One of the most encouraging coarse spaces for Helmholtz problems
(called `H-GenEO' in \cite{bootland:2021:ACS,bootland:2021:GCS}) has  promising robustness properties at high frequency, but, since it  involves solving for local generalised eigenvalues of the Helmholtz operator,
it is beyond the reach of the present theory. However, the fact that
H-GenEO behaves similarly to the method analysed in this paper at lower frequencies, see \cite{bootland:2021:GCS}, motivated the current work.

Our other primary motivation was  to find out how well the  GenEO technology works (both in theory and in practice) for general  non-self-adjoint and indefinite PDEs of the form \eqref{eq:2-1}.
The GenEO coarse space that we analyse here is the same as the space
previously proposed and analysed in the SPD case. Thus our numerical results here indicate  the results which  can be expected when any standard GenEO code
is used for the more general problem \eqref{eq:2-1}.
The main contribution of the present paper therefore is the analysis of how GenEO
performs as a preconditioner for GMRES when applied to \eqref{eq:2-1}
and, in particular, its  dependence  on heterogeneity, indefiniteness, and lack of self-adjointness.

Since the FE systems that arise from \eqref{eq:2-1} are  in general non-Hermitian,
we use  GMRES as the iterative solver
and our main tool for convergence  analysis is the  `Elman theory'
(\cite{elman} or \cite[Lemma C.11]{toselli}), which requires   an upper bound for the norm of
the preconditioned matrix and a lower  bound on the distance of its field of values from the origin.
The number of GMRES iterates to achieve a given error tolerance is then a function of these two bounds.
Thus, the dependence  of these bounds on the parameters of the problem (e.g., mesh size, subdomain
size, overlap, heterogeneity, and strength of indefiniteness/non-self-adjointness) is of paramount
interest when analysing the robustness and scalability of the preconditioner.
The use of the Elman theory  in domain decomposition goes back to Cai and Widlund  \cite{cai} (see also \cite[Section 11]{toselli}), who used
classical linear piecewise polynomials on a coarse grid
without attention to coefficient heterogeneity.
Analogous  results for Maxwell's equations were obtained in \cite{pasciak}.

\bigskip

\noindent {\bf The main results of this paper.}  Our main theoretical result, Theorem~\ref{thm:2-1}, provides rigorous upper bounds on the coarse mesh
diameter $H$ and on the `eigenvalue tolerance'~$\Theta$ (for the local GenEO eigenproblems) which ensure
that GMRES enjoys robust and mesh-independent convergence  when applied to the preconditioned problem.
Reducing   $H$  and $\Theta$ still  further would  improve  GMRES convergence  but increase  cost:
smaller     $\Theta$ means that more  local eigenfunctions are incorporated into the coarse space;
smaller  $H$ means that more
local eigenvalue problems have to be solved, although the size of the local subdomains becomes smaller and, in practice,  fewer eigenfunctions are needed on each subdomain. Optimal choices of $\Theta$ and $H$  are typically determined empirically.
Since Theorem \ref{thm:2-1}  is quite technical, by way of introduction we present here
the result when it is specialised to the following simpler, yet instructive,   example.

\begin{example}[An indefinite but self-adjoint problem]   \label{ex:1} \em
Consider problem \eqref{eq:2-1} with $\mathbf{b} = \mathbf{0}$, and $c(x) = - \kappa$ where $\kappa$ is a positive constant. That is
\begin{align}
-{\rm div}( A \nabla u) -  \kappa u = f,
\label{DP}
\end{align}
with homogeneous Dirichlet boundary conditions.  Let  $a_{\min}$ and $a_{\max}$ be, respectively, lower and upper spectral bounds on $A$ (see \eqref{eq:Abound}). Without loss of generality,  we assume that $a_{\min} = 1$ and that the diameter $D_\Omega$ of $\Omega$ satisfies $D_\Omega \le 1$. (Otherwise the problem can be rescaled to achieve these requirements.) The GenEO coarse space is based on the $m_i$ (dominant) eigenfunctions corresponding to the smallest
eigenvalues \iggr{$\lambda^i_1 \leq \lambda^i_2 \leq \ldots \leq \lambda_{m_i}^i$} of the generalised eigenvalue problem   on  $\Omega_i $  defined in \eqref{eq:2-18}.  To obtain a robust rate of convergence for GMRES that depends only on $k_0$ (the maximum  number of times any point is overlapped by the subdomains $\Omega_i$), Theorem \ref{thm:2-1} proves that
the following conditions on the `eigenvalue tolerance' $\Theta := 1/\min_{i=1}^N  \lambda^i_{m_i+1} $ and on the  (coarse) mesh diameter $H$ are sufficient:
\begin{align} \label{eq:bdLam0}
H \ \lesssim \ \kappa^{-1} \quad \text{and} \quad  \Theta^{1/2} \lesssim (\Cstab^* + 1)^{-1} \,  \kappa^{-2}.
\end{align}
Here   $\Cstab^*>0$ is the stability constant for the adjoint problem to \eqref{eq:2-1} and is
defined in \eqref{eq:Cstabstar}. It blows up as  the distance of $\kappa$ from an eigenvalue of the Dirichlet problem for $-{\rm div} (A \nabla u)$ decreases.
The hidden constants in \eqref{eq:bdLam0}
\iggr{depend only  on  $k_0$}.
Thus, under  conditions \eqref{eq:bdLam0} the rate of convergence of GMRES depends only on $k_0$.
For constant $A$, the first  condition in \eqref{eq:bdLam0} is exactly the one required to ensure that the piecewise linear finite element discretization of \eqref{eq:2-1} on a coarse mesh of diameter $H$ is uniquely solvable (see, e.g., \cite{cai}).
\end{example}

The full statement of Theorem \ref{thm:2-1}  for a spatially- and sign-varying $c$ and for a nonzero, spatially varying  convection coefficient  $\mathbf{b}$ is structurally  similar. Sufficient conditions for robustness are obtained by replacing   $\kappa^{-1}$ and $\kappa^{-2}$  in the two bounds  \eqref{eq:bdLam0}, respectively, by
$$(1 +  \|\mathbf{b}\|_{\infty} + \|c\|_{\infty}  )^{-1},  $$
and
$$(1+  \|\mathbf{b}\|_{\infty}  + \|c\|_{\infty}  )^{-1}(1+ \|\mathbf{b}\|_{\infty} + \|\nabla \cdot \mathbf{b}\|_{\infty} + \|c\|_{\infty} )^{-1}.$$
By making a small change to the GenEO construction,  the contribution from $c$ in  these bounds can be removed if $c$ is uniformly non-negative. We remark also that our results do not depend on the spatial variation of $A$  or $c$ and depend on the variation of $\mathbf{b}$ only through its divergence.

The proof of Theorem \ref{thm:2-1} is obtained by estimating the norm and the field of values of the preconditioned problem in the energy inner product induced by the second order operator in \eqref{eq:2-1},  and then applying the Elman estimate. Several technical difficulties have to be overcome, particularly:  (i) since the regularity assumptions on the coefficients $A$, $\mathbf{b}$, and $c$ are minimal, there is no a priori regularity for solutions of \eqref{eq:2-1} beyond $H^1$; (ii) because of the indefiniteness and non-self-adjointness of \eqref{eq:2-1}, the existence and stability of solutions of both local and coarse space subproblems
has to be established,  and this is the first source of conditions on $H$ and $\Theta$; (iii) all estimates should be explicit in the coefficients $A$, $\mathbf{b}$, and $c$, and as sharp as possible.

After the proof of  Theorem \ref{thm:2-1}, we remark  that restricting the result to the special case $\mathbf{b} = \mathbf{0}$ and $c = 0$, we obtain essentially the same result as in \cite{spillane:2014}, which is interesting because the latter makes use of `self-adjoint' technology, working with eigenvalue and condition number estimates and does not estimate the field of values as is required in \cite{elman}.

In Section \ref{sec:numerical} we give substantial numerical experiments illustrating the theory. All experiments are on rectangular domains with uniform fine meshes.  Highlights of these results include: (i) for  problem \eqref{eq:2-1},  with $A = I$,  $\mathbf{b} = 0$, and $c = -\kappa$ constant,  the standard GenEO preconditioner is remarkably effective even up to about $\kappa = 1000$; (ii) in the case $A = I$, $c = 0$, and $\mathbf{b}$ variable, the method is effective up to about $\Vert \mathbf{b} \Vert_\infty = 1000$, with the results only degrading moderately as the divergence of
$\mathbf{b}$ increases from zero; (iii) the results above  hardly change when $A = I$ is changed to a highly heterogeneous field; (iv) provided the ratio of  overlap width to subdomain diameter is held fixed as $h\rightarrow 0$, the dimension of the GenEO coarse space is independent of $h$; (v) the method provides a
very effective preconditioner for systems arising from implicit time-stepping applied to convection--diffusion--reaction problems, and the preconditioner can be assembled once and used for all choices of (variable) time-steps.

We remark that, while Theorem \ref{thm:2-1} provides convergence estimates for  `weighted'  GMRES
applied  in the `energy' inner product, our numerical experiments use  standard GMRES and the    Euclidean inner product. In Corollary \ref{cor:Euclidean}
we show that standard GMRES attains the same accuracy as weighted GMRES after
a few extra iterations which grow only logarithmically in $h^{-1}$ and in the contrast of the matrix $A$.
The empirical observation that weighted and standard GMRES yield similar iteration counts in domain decomposition methods has been made before, e.g., \cite{cai_zou,ivan} and a result related but not identical to Corollary \ref{cor:Euclidean} is given in \cite[Corollary 5.8]{shihua}.

In our experiments the rather stringent looking condition on the eigenvalue tolerance in \eqref{eq:bdLam0} appears to be overly pessimistic and the coarse space sizes necessary to achieve full robustness in practice
are moderate. Similar observations were made in \cite{cai}, for homogeneous but indefinite problems and classical coarse spaces.

\bigskip

\noindent {\bf Plan of the paper.} In Section \ref{prob} we give some background to domain decomposition for
\eqref{eq:2-1} and introduce the GenEO preconditioner. In Section \ref{sec-2} we prove some of the
central technical estimates  concerning solutions of the subproblems, which are then used in Section \ref{Proof} to prove the main theorem. Finally, numerical experiments are detailed in Section \ref{sec:numerical}.

\section{Background Material}
\label{prob}

\subsection{Problem formulation and discretization}

The weak formulation of (\ref{eq:2-1}) is to find $u\in H_{0}^{1}(\Omega)$ such that
\begin{equation}\label{eq:2-2}
b(u,v) = (f,v),\quad \forall v\in H_{0}^{1}(\Omega),
\end{equation}
where   $f\in L^2(\Omega)$ and  the bilinear form $b(\cdot,\cdot) \colon H_{0}^{1}(\Omega)\times H_{0}^{1}(\Omega)\rightarrow \mathbb{R}$ is defined by
\begin{equation}\label{eq:2-3}
b(u,v) = \int_{\Omega}\big(A\nabla u\cdot{\nabla v} + \mathbf{b}\cdot \nabla u \, v+ cu v\big)dx.
\end{equation}
Throughout the paper we make the following assumptions on the problem:

\begin{assumption} \label{ass:1}
The coefficients ($A$, $\mathbf{b}$, $c$) and the right-hand side $f$ are assumed to satisfy
\begin{enumerate}
\item[(i)]
$A:\Omega\rightarrow \mathbb{R}^{d\times d}$ is symmetric and
there exists $0< a_{\rm min} < a_{\rm max}$ such that
\begin{equation}
a_{\rm min} |\xi|^{2}\leq A(x)\xi\cdot\xi \leq a_{\rm max} |\xi|^{2}, \quad \text{for all} \ x\in\Omega,\;\;  \xi \in \mathbb{R}^{d};
\label{eq:Abound}
\end{equation}

\item[(ii)] $\mathbf{b}\in (L^{\infty}(\Omega))^{d}$, $\nabla\cdot\mathbf{b}\in L^{\infty}(\Omega)$, $c\in L^{\infty}(\Omega)$, $f\in L^{2}(\Omega)$.
\item[(iii)] Without loss of generality, we assume that $a_{\rm min} = 1$ and that
the diameter $D_{\Omega}$ of the domain $\Omega$ satisfies $D_{\Omega} \le
1$; otherwise the problem can simply be rescaled to achieve this.

\item[(iv)] In order to distinguish between strongly definite and strongly indefinite problems we allow    a splitting of $c$:
\begin{align} \label{eq:splitc} c = c^ + + c^-, \quad \text{with} \quad c^+(x) \geq 0 \quad \text{for all } x \in \Omega,  \quad \text{and} \quad c^- : = c- c^+.
\end{align}
\end{enumerate}
\end{assumption}
A natural choice would be to choose $c^+$ to be the non-negative  part of $c$, i.e.,
\begin{align*}
c^+(x) = \left\{ \begin{array}{ll}
c(x) \ &\text{when} \ c(x) \geq 0,\\
0 \ & \text{otherwise.} \end{array} \right.
\end{align*}
With this choice,  $c^- = 0$ if  $c$ is non-negative.
However other splittings are possible, e.g., $c^+ = 0$, and $c^- = c$.

Our analysis treats \eqref{eq:2-3} as a perturbation of the coercive   bilinear form
\begin{equation}\label{eq:2-4}
a(u,v) = \int_{\Omega}\left(A\nabla u\cdot{\nabla v} + c^+ u v\right) \,dx, \quad u, v \in H^1_0(\Omega).
\end{equation}
Using this we can write
\begin{align} \label{eq:b1}
b(u,v) = a(u,v) + (\mathbf{b}\cdot \nabla u , v) + (c^-u,v), \quad \text{for all} \quad u,v \in H^1_0(\Omega).
\end{align}
Alternatively,  after integration by parts, we can rewrite $b(u,v)$ as
\begin{equation}\label{eq:2-5}
b(u, v) = a(u,v) - (\mathbf{b}\cdot\nabla v,u)+(\tilde{c}u,v), \quad \text{for all} \quad u,v \in H^1_0(\Omega),
\end{equation}
with
\begin{align}
\label{eq:ctilde} \tilde{c} = c^--\nabla\cdot \mathbf{b},
\end{align}
where in \eqref{eq:ctilde} we make use of  Assumption \ref{ass:1}(ii). Both expressions for $b$ will be useful later.

\begin{notation} For any subdomain $\Omega' \subset \Omega$, we write $(\cdot, \cdot)_{\Omega'}$ to denote the $L^2(\Omega')$ inner product,  with induced norm $\Vert \cdot \Vert_{L^2(\Omega')}$. When $\Omega' = \Omega$ we write these more simply as $(\cdot, \cdot)$  and $\Vert \cdot \Vert$ respectively. Further, the $L^\infty$ norm on $\Omega$ is denoted $\Vert \cdot \Vert_\infty$.  For all other norms we indicate the space explicitly as  a subscript, e.g.,
$\Vert \cdot \Vert_{H^1_0(\Omega)}$.
We will make extensive use of the energy norm
\begin{equation}\label{eq:energy}
\Vert u\Vert_{a} :=\sqrt{a(u,u)},\quad {\rm for}\;\,{\rm all}\,\;u\in H_{0}^{1}(\Omega).
\end{equation}
\end{notation}

Furthermore, we assume unique solvability for \eqref{eq:2-2} and its adjoint:
\begin{assumption} \label{ass:2}
\begin{enumerate}
\item[(i)]
For any $f \in L^2(\Omega)$, we assume
the problem \eqref{eq:2-2} has a unique solution $u \in H^1_0(\Omega)$ and there exists a constant $\Cstab > 0 $ such that
$$
\Vert u \Vert_{a}\ \leq \Cstab \Vert f \Vert,  \quad \text{for all} \ f \in L^{2}(\Omega).
$$
\item[(ii)] For the adjoint problem: seek $w \in H^1_0(\Omega)$ such that   $b(v,w) = (f,v), \ v \in H^1_0(\Omega)$, we assume that,  for any $f \in L^2(\Omega)$,  there exists a unique solution $w \in H^1_0(\Omega)$  and a constant $\Cstab^*>0$ such that
\begin{align} \label{eq:Cstabstar}
\Vert w \Vert_{a} \ \leq \Cstab^* \Vert f \Vert,  \quad \text{for all} \ f \in L^{2}(\Omega).
\end{align}
\end{enumerate}
\end{assumption}

\noindent
Note that, in contrast to other works (e.g., \cite{cai}, \cite[Chapter
5]{brenner}), we make no \emph{a priori} regularity assumptions on $u$
beyond those implied by Assumptions \ref{ass:1} and
\ref{ass:2}. Thus, our only explicit assumption on $A$ is its uniform positive-definiteness \eqref{eq:Abound}.

Let $\mathcal{T}_{h}$ be a shape-regular triangulation of the domain
$\Omega$ consisting of triangles (2-d) or tetrahedra (3-d) of maximal diameter
$h$. Let $V^{h}\subset H_0^1(\Omega)$ be {any  conforming finite element (FE)} space. The Galerkin approximation of (\ref{eq:2-2}) seeks $u_{h}\in V^{h}$ such that
\begin{equation}\label{eq:2-7}
b(u_{h},v) = (f,v),\quad \text{for all} \quad  v \in V^{h}.
\end{equation}

Let  $n$ denote the dimension of $V^{h}$ and let $\{\phi_{i}\}^{n}_{i=1}$ be a basis for $V^{h}$. The linear system corresponding to  (\ref{eq:2-7}) is given by
\begin{equation}\label{eq:2-8}
\mathbf{B}\mathbf{u} = \mathbf{f},
\end{equation}
where the matrix $\mathbf{B}$  is defined by  $\mathbf{B}_{ij} = b(\phi_{i},\phi_{j})$  and $\mathbf{f}
= (f_{i}) \in \mathbb{R}^{n}$ with $f_{i} = (f, \phi_{i})$.
The solvability of (\ref{eq:2-7})
is guaranteed by the following result from  \cite[Theorem 2]{schatz-1}.
\begin{lemma}[Schatz \& Wang, 1996]\label{lem:s1}
Suppose Assumptions \ref{ass:1} and \ref{ass:2} hold. Then there exists $h_{0}>0$, such that for each $0<h<h_{0}$, (\ref{eq:2-7}) has a unique solution $u_{h}$. In addition, for any given $\varepsilon>0$, there exists an $h_{1}=h_{1}(\varepsilon)$, such that
\begin{equation}
\Vert u-u_{h}\Vert_{H^{1}(\Omega)}\leq \varepsilon \Vert f\Vert \, ,
\end{equation}
for $0<h<h_{1}$, where $u$ is the solution of (\ref{eq:2-2}).
The same holds true also for the FE solution to the adjoint
problem,  introduced in Assumption \ref{ass:2}(ii).
\end{lemma}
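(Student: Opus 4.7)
The plan is to follow the classical Schatz-type duality argument for Galerkin approximation of non-coercive elliptic problems, treating $b$ as a lower-order perturbation of the coercive form $a$ via \eqref{eq:b1} and invoking the adjoint stability in Assumption~\ref{ass:2}(ii). Since $V^h$ is finite-dimensional, it suffices to establish uniqueness of the discrete solution for all sufficiently small $h$; existence then follows automatically, and the error bound comes from a parallel argument applied to $u-u_h$.

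For uniqueness I would suppose $u_h \in V^h$ solves $b(u_h,v_h)=0$ for every $v_h\in V^h$. Testing with $v_h=u_h$ and using \eqref{eq:b1} gives
\[
\|u_h\|_a^2 \,=\, -(\mathbf{b}\cdot\nabla u_h, u_h) - (c^- u_h, u_h),
\]
so by Cauchy--Schwarz and $a_{\min}=1$ one obtains $\|u_h\|_a \leq C_1\|u_h\|$, with $C_1$ depending only on $\|\mathbf{b}\|_\infty$ and $\|c^-\|_\infty$. To control $\|u_h\|$ I would then use Aubin--Nitsche duality: let $w\in H_0^1(\Omega)$ solve $b(v,w)=(u_h,v)$ for all $v\in H_0^1(\Omega)$, so that $\|w\|_a \leq \Cstab^*\|u_h\|$ by Assumption~\ref{ass:2}(ii). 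Galerkin orthogonality then gives, for any $w_h\in V^h$,
\[
\|u_h\|^2 \,=\, b(u_h, w-w_h) \,\leq\, C_2\,\|u_h\|_a\,\|w-w_h\|_a,
\]
where $C_2$ depends on $\|\mathbf{b}\|_\infty$, $\|c^-\|_\infty$ and the Poincar\'e constant, but not on $h$.

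The critical step, and main obstacle, is to prove that the uniform best-approximation quantity
\[
\eta(h) \,:=\, \sup_{g \in L^2(\Omega),\ \|g\|=1}\ \inf_{w_h \in V^h}\|T^*g - w_h\|_a
\]
tends to zero as $h \to 0$, where $T^*g := w$ denotes the adjoint solution with data $g$. Since Assumption~\ref{ass:1} provides no $H^2$-shift for the adjoint problem, I cannot extract a rate, only qualitative convergence via compactness. Rewriting the adjoint equation using \eqref{eq:2-5} as the coercive problem $a(v,w) = (g + \mathbf{b}\cdot\nabla w + (\nabla\cdot\mathbf{b})w - c^- w,\, v)$, and noting that the embedding $L^2(\Omega)\hookrightarrow H^{-1}(\Omega)$ is compact (by duality with Rellich), I would combine this with the identity $\mathbf{b}\cdot\nabla w_n = \nabla\cdot(\mathbf{b}w_n) - (\nabla\cdot\mathbf{b})w_n$ to bootstrap weak convergence of $g_n$ in $L^2$ into strong convergence of the corresponding right-hand sides in $H^{-1}$, and hence of $T^*g_n$ in $H_0^1(\Omega)$ via coercivity of $a$. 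Thus $T^*$ maps bounded sets of $L^2(\Omega)$ to relatively compact sets of $H_0^1(\Omega)$. Since $\inf_{w_h}\|w-w_h\|_a \to 0$ pointwise for each fixed $w$ by density of $\bigcup_h V^h$ in $H_0^1(\Omega)$, a standard finite-net/covering argument on the compact set then delivers $\eta(h) \to 0$. Combined with the previous displays, $\|u_h\| \leq C_1 C_2\,\eta(h)\,\|u_h\|$, which forces $u_h = 0$ once $h$ is small enough.

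For the error estimate, I would run the same scheme with $u-u_h$ in place of $u_h$, using Galerkin orthogonality $b(u-u_h,v_h)=0$ to derive a C\'ea-type bound $\|u-u_h\|_a \leq C_3\inf_{v_h}\|u-v_h\|_a + C_4\|u-u_h\|$, followed by the duality estimate $\|u-u_h\|\leq C_5\,\eta(h)\,\|u-u_h\|_a$. Absorbing the lower-order term for $h$ small enough gives $\|u-u_h\|_a \lesssim \inf_{v_h}\|u-v_h\|_a$. A second application of the same compactness argument --- now to the forward solution operator $T : L^2(\Omega) \to H_0^1(\Omega)$ and the set $\{Tf:\|f\|\leq 1\}$ --- makes this best-approximation error $\leq \varepsilon\|f\|$ uniformly in $f$ once $h$ is sufficiently small, which is the claimed bound. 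The same reasoning applies verbatim to the adjoint FE problem.
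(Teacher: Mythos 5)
The paper does not prove this lemma at all --- it is quoted directly from Schatz \& Wang \cite[Theorem 2]{schatz-1} --- and your proposal is essentially a correct reconstruction of that cited proof: the Schatz duality argument in which the absence of any elliptic regularity is compensated by compactness of the solution operators $T,\,T^{*}\colon L^{2}(\Omega)\to H^{1}_{0}(\Omega)$, giving uniform smallness of the best-approximation quantity $\eta(h)$ on the relevant compact sets, first to force uniqueness (hence existence) of $u_h$ and then to absorb the lower-order term in the C\'ea/duality error bound. The only point worth stating explicitly is that the finite-net step requires the approximation property of the family $\{V^{h}\}$, i.e.\ $\inf_{v_h\in V^{h}}\Vert v-v_h\Vert_{H^{1}(\Omega)}\to 0$ as $h\to 0$ for each fixed $v\in H^{1}_{0}(\Omega)$, which is implicit in the paper's (and Schatz--Wang's) standing assumptions on the conforming FE spaces over the shape-regular meshes $\mathcal{T}_h$.
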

Note that Lemma \ref{lem:s1} implies that, for any $\varepsilon > 0$,
there exists $h_2 := h_1(\varepsilon/  (\sqrt{a_{\max} + \Vert c^+ \Vert_\infty}))$
such that
\begin{equation}
\Vert u-u_{h}\Vert_{a}\leq \varepsilon \Vert f\Vert\,,
\quad \text{for} \ \ 0<h<h_{2}\,.
\end{equation}

To finish this subsection, we introduce the following Friedrichs inequality with an explicit constant for subsequent use.
\begin{lemma}[Friedrichs inequality {\cite[Theorem 13.19]{Leoni}}]
\label{lem:2-4-0}Let $\Omega' \subset \mathbb{R}^{d}$ be an open set that lies between two parallel hyperplanes. Then, for all $u\in H_{0}^{1}(\Omega')$,
\begin{equation}
\label{eq:Fried} \Vert u \Vert _{L^{2}(\Omega')} \leq \frac{L}{\sqrt{2}} \Vert \nabla u \Vert_{L^{2}(\Omega')},
\end{equation}
where $L$ is the distance between the two hyperplanes.
\end{lemma}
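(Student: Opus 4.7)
The plan is a textbook one-dimensional reduction: reduce to $C^\infty_c$ functions, use the fundamental theorem of calculus in the direction transverse to the two hyperplanes, and then integrate. After an orthogonal change of coordinates I may assume the two bounding hyperplanes are $\{x_1=0\}$ and $\{x_1=L\}$, so that $\Omega'\subset\{0<x_1<L\}$. By density of $C_c^\infty(\Omega')$ in $H^1_0(\Omega')$ it suffices to establish \eqref{eq:Fried} for $u\in C_c^\infty(\Omega')$; extending $u$ by zero gives a smooth compactly supported function on the whole slab $\{0<x_1<L\}$.

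For fixed $x'=(x_2,\ldots,x_d)$, the extended function $t\mapsto u(t,x')$ vanishes for $t\le 0$, so the fundamental theorem of calculus gives
\begin{equation*}
u(x_1,x') \;=\; \int_0^{x_1}\partial_1 u(t,x')\,dt.
\end{equation*}
Applying Cauchy--Schwarz to the right-hand side yields
\begin{equation*}
|u(x_1,x')|^2 \;\le\; x_1 \int_0^{x_1}|\partial_1 u(t,x')|^2\,dt \;\le\; x_1 \int_0^L|\partial_1 u(t,x')|^2\,dt.
\end{equation*}

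Integrating this inequality in $x_1$ over $(0,L)$ and using $\int_0^L x_1\,dx_1 = L^2/2$ gives
\begin{equation*}
\int_0^L |u(x_1,x')|^2\,dx_1 \;\le\; \frac{L^2}{2}\int_0^L|\partial_1 u(t,x')|^2\,dt.
\end{equation*}
A further integration over $x'\in\mathbb{R}^{d-1}$, followed by the pointwise bound $|\partial_1 u|\le|\nabla u|$, yields
\begin{equation*}
\Vert u\Vert_{L^2(\Omega')}^2 \;\le\; \frac{L^2}{2}\,\Vert \partial_1 u\Vert_{L^2(\Omega')}^2 \;\le\; \frac{L^2}{2}\,\Vert\nabla u\Vert_{L^2(\Omega')}^2,
\end{equation*}
which is the claimed inequality. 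There is no real obstacle here; the only mild care needed is to ensure the zero-extension argument is justified for $u\in H^1_0(\Omega')$ (standard) and that the constant tracked through Cauchy--Schwarz and Fubini gives exactly $L/\sqrt{2}$ rather than $L$.
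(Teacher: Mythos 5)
Your proof is correct: the one-dimensional reduction via the fundamental theorem of calculus, Cauchy--Schwarz, and Fubini does yield exactly the constant $L/\sqrt{2}$ in \eqref{eq:Fried}, and the density/zero-extension step for $H_0^1(\Omega')$ is handled appropriately. The paper gives no proof of this lemma, simply citing \cite{Leoni}; your argument is the standard one that underlies that citation, so there is nothing to add.
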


Combining \eqref{eq:Fried} with Assumption \ref{ass:1}(iii), for any subdomain $\Omega' \subset \Omega$  with diameter $H$, we have the estimate \begin{equation}\label{eq:energy_est}
\Vert u\Vert_{L^{2}(\Omega')}\leq \frac{H}{\sqrt{2}}\Vert \nabla u\Vert_{L^{2}(\Omega')} \leq \frac{H}{\sqrt{2}}  \Vert u \Vert_{a, \Omega'},  \quad \text{for all} \quad  u\in H^{1}_{0}(\Omega').
\end{equation}

\subsection{Domain decomposition}
\label{subsec:dd}

In order to construct a one-level Schwarz preconditioner for (\ref{eq:2-8}), we first partition the domain $\Omega$ into a set of non-overlapping subdomains $\{\Omega^{\prime}_{i} \}_{i=1}^{N}$. We assume the boundaries of the $\Omega_i'$ are  resolved by $\mathcal{T}_{h}$. Each subdomain $\Omega^{\prime}_{i}$ is extended to a domain $\Omega_{i}$ by adding one or more layers of adjoining mesh elements, thus   creating  an overlapping decomposition $\{\Omega_{i} \}_{i=1}^{N}$ of $\Omega$. We assume that each $\Omega_{i}$ has a diameter $H_{i}$ and set  $H = \max_i\{ H_i\} $.

For each $i=1,\ldots,N$ we define
\begin{equation}\label{eq:2-9}
\widetilde{V}_{i} = \{v|_{\Omega_{i}}\;:\; v\in V^{h}\} \subset H^1(\Omega_i), \quad \text{and} \quad
V_{i} = \{v \in \widetilde{V}_i \; : \; v\vert_{\partial {\Omega_i}} = 0 \} \subset H_0^1(\Omega_i).
\end{equation}
For any $u$, $v\in \widetilde{V}_{i}$ and $i=1,\ldots,N$ we
define the local bilinear forms
\begin{align}
a_{\Omega_{i}}(u,v) :=&  \int_{\Omega_{i}}\left(A\nabla u\cdot{\nabla v}\, + c^+ u v\right) dx, 
\nonumber \\
b_{\Omega_{i}}(u,v) :=&  \int_{\Omega_{i}}\big(A \nabla u\cdot{\nabla v} + \mathbf{b}\cdot \nabla u \, v+ cu v\big)dx\nonumber   \\
=& \mathop{} a_{\Omega_{i}}(u,v) + (\mathbf{b} \cdot \nabla u, v)_{\Omega_i} + (c^-\, u , v)_{\Omega_i}.
\label{eq:2-10}
\end{align}
Then, in analogy to \eqref{eq:energy}, we write
\begin{equation}\label{eq:2-11}
\Vert u \Vert_{a,\Omega_{i}} = \sqrt{a_{\Omega_{i}}(u,u)},\quad {\rm for}\;\,{\rm all}\,\;u\in V_{i},
\end{equation}
which is a norm on $V_{i}$.

For any $v_{i}\in V_i$, let $R_i^\top v_i$ denote its zero extension to all of $\Omega$. Then, because of \eqref{eq:2-9},
\begin{equation}\label{eq:2-12}
R_{i}^{\top} \colon V_{i} \rightarrow V^{h},\quad i=1,\ldots,N.
\end{equation}
The $L^2(\Omega)$ adjoint of $R_i^\top$ is the
\emph{restriction operator} $R_{i} \colon V^h \rightarrow  V_i$,  defined by $R_iv = v\vert_{\Omega_i}$.
Using the extension operators, the bilinear forms in
\eqref{eq:2-10} can be defined equivalently by
\begin{equation}\label{eq:2-13}
b_{{\Omega}_{i}}(u,v) : = b(R_{i}^{\top}u,R_{i}^{\top}v), \quad
a_{{\Omega}_{i}}(u,v) : = a(R_{i}^{\top}u,R_{i}^{\top}v), \ \ \text{and} \ \ (u,v)_{\Omega_{i}} : = (R_{i}^{\top}u,R_{i}^{\top}v),
\end{equation}
for all $u,v\in V_{i}$.

In matrix form, the classical one-level additive Schwarz preconditioner for \eqref{eq:2-8} can now be defined as
\begin{equation}\label{eq:2-14}
\mathbf{M}_{AS,1}^{-1} = \sum_{i=1}^{N}\mathbf{R}_{i}^{T}\mathbf{B}_{i}^{-1}\mathbf{R}_{i},\quad \text{where} \quad \mathbf{B}_{i} := \mathbf{R}_{i}\mathbf{B}\mathbf{R}_{i}^{T}, \quad i = 1, \ldots, N.
\end{equation}
Here, $\mathbf{R}_{i}$ denotes the matrix representation of $R_{i}$
with respect to the basis $\{\phi_{i}\}^{n}_{i=1}$ of $V^h$.

It is well-known that, since each application of $\mathbf{M}_{AS,1}^{-1}$
exchanges information only between neighbouring subdomains, this preconditioner is generally not scalable
with respect to the number of subdomains $N$. To facilitate a global
exchange of information, a solve in a coarse space is usually added.
Let ${V_0} \subset V^{h}$ be such a coarse space; a classical example is an
$H^1_0(\Omega)$-conforming FE space corresponding to a coarse triangulation $\mathcal{T}_0$
of  $\Omega$, chosen so  that
$\mathcal{T}_h$ is a (regular) refinement of $\mathcal{T}_0$.
Let $R_{0}^{\top} \colon V_{0}\rightarrow V^{h}$ denote
the natural embedding (i.e., the identity operator)  and let  $R_{0}$ be the $L^2$ adjoint
of $R_0^\top$, i.e.,
$$ (v_0, R_0 w) := (R_0^\top v_0 , w)   \quad \text{for all} \quad w \in V^h, v_0 \in V_0. $$


In matrix form, the two-level additive Schwarz preconditioner can now simply be written as
\begin{equation}\label{eq:2-24}
\mathbf{M}_{AS,2}^{-1} = \sum_{i=0}^{N}\mathbf{R}_{i}^{T}\mathbf{B}_{i}^{-1}\mathbf{R}_{i}, \quad \text{where} \quad  \mathbf{B}_{i} := \mathbf{R}_{i}\mathbf{B}\mathbf{R}_{i}^{T}, \quad i = 0, \ldots, N,
\end{equation}
and where $\mathbf{R}_{0}$ is the matrix representation of $R_{0}$ with
respect to a specified basis of the coarse space $V_0$. The preconditioned system \eqref{eq:2-9} then reads
\begin{equation}\label{eq:2-23}
\mathbf{M}_{AS,2}^{-1} \mathbf{B} \mathbf{u} \ =\  \mathbf{M}_{AS,2}^{-1} \mathbf{f} .
\end{equation}

The convergence of GMRES for \eqref{eq:2-23} was analysed in
\cite{cai} for a classical piecewise linear coarse space
(see also \cite[Chapter~11]{toselli}). Under an a priori regularity
assumption on the solution $w$ of the adjoint problem (see the discussion just after  Assumption \ref{ass:2}(ii)),
and for sufficiently small coarse mesh
size~$H$, it was shown in~\cite{cai} that GMRES applied to \eqref{eq:2-23}  converges in a number of iterations
that is independent of~$h$ and depends only  on
$H/\delta$, where $\delta>0$ is the size of the overlap of the
subdomains. With the classical coarse space, the convergence rate of GMRES also depends in general
on the coefficients appearing in the PDE, in particular on $a_{\max}/a_{\min}$.
In order to achieve robustness to strong heterogeneities in the
coefficients and to avoid a priori assumptions on the regularity
of the adjoint solution, we will  introduce in Section~\ref{subsec:GenEO} an alternative coarse
space $V_0$.

Before leaving this subsection we introduce some notation which will be useful in the analysis later.
For each $i = 0, \ldots,N$ we define projections  $T_{i} \colon V^{h}\rightarrow V_{i}$, by
\begin{equation}\label{eq:2-20}
b_{\Omega_{i}}(T_{i}u, v) = b(u,R_{i}^{\top}v), \quad \text{for all} \quad v\in V_{i},
\end{equation}
where $\Omega_0 := \Omega$.
Conditions which ensure that the operators $T_i$ are  well-defined are presented in
Section~\ref{subsec:solvability}. Given the operators $T_i$, we can define
the operator $T \colon V^{h}\rightarrow V^{h}$ as
\begin{equation}\label{eq:2-21}
T:= \sum_{i=0}^{N}R_{i}^{\top}T_{i},
\end{equation}
which represents the preconditioner \eqref{eq:2-24} in the following sense.
\begin{proposition}
For any $u, v \in V^h$ with nodal vectors $\mathbf{u}, \mathbf{v} \in \mathbb{R}^n$,
we have \begin{align}\label{eq:equivalence}
\langle\mathbf{M}_{AS,2}^{-1} \mathbf{B} \mathbf{u}, \mathbf{v} \rangle_{\mathbf{A}} = (Tu,v)_a,
\end{align}
where $\langle \cdot, \cdot\rangle_{\mathbf{A}}$ denotes the inner product on $\mathbb{R}^n$ induced by the matrix
$\mathbf{A}_{i,j} = a(\phi_i, \phi_j).$
\end{proposition}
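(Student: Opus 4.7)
The plan is to verify directly that the vector $\mathbf{M}_{AS,2}^{-1}\mathbf{B}\mathbf{u}$ is precisely the coordinate vector of $Tu$ in the basis $\{\phi_i\}_{i=1}^n$, after which the claim follows immediately from the fact that the stiffness matrix $\mathbf{A}$ induces the $a$-form on $V^h$. Indeed, by definition of $\mathbf{A}$, for any $w \in V^h$ with coordinate vector $\mathbf{w}$, we have $\mathbf{v}^T \mathbf{A}\mathbf{w} = a(w,v) = (w,v)_a$, so once we show $\mathbf{M}_{AS,2}^{-1}\mathbf{B}\mathbf{u} = \mathbf{T}\mathbf{u}$, where $\mathbf{T}\mathbf{u}$ is the coordinate vector of $Tu$, the identity \eqref{eq:equivalence} is immediate.

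The first step is to break the global application into subdomain applications: using \eqref{eq:2-24},
\begin{equation*}
\mathbf{M}_{AS,2}^{-1}\mathbf{B}\mathbf{u} \;=\; \sum_{i=0}^{N} \mathbf{R}_i^T \bigl(\mathbf{B}_i^{-1}\mathbf{R}_i\mathbf{B}\mathbf{u}\bigr),
\end{equation*}
so it suffices to identify each intermediate vector $\mathbf{w}_i := \mathbf{B}_i^{-1}\mathbf{R}_i\mathbf{B}\mathbf{u}$ with the coordinate vector of $T_i u \in V_i$ (with respect to the appropriate local basis), and to recognise that left-multiplication by $\mathbf{R}_i^T$ realises the zero-extension operator $R_i^\top$ acting from $V_i$ into $V^h$ (respectively the identity embedding for $i=0$). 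The latter is just the definition of $\mathbf{R}_i$ as the matrix representation of $R_i$.

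The second step is the identification $\mathbf{w}_i \leftrightarrow T_i u$. Writing $\mathbf{w}_i$ in the form $\mathbf{B}_i \mathbf{w}_i = \mathbf{R}_i \mathbf{B}\mathbf{u}$ and testing against an arbitrary $\mathbf{v}_i \in \mathbb{R}^{\dim V_i}$ corresponding to $v_i \in V_i$, the left-hand side becomes $\mathbf{v}_i^T \mathbf{B}_i \mathbf{w}_i = b_{\Omega_i}(w_i, v_i)$ by the definition $\mathbf{B}_i = \mathbf{R}_i \mathbf{B} \mathbf{R}_i^T$ together with \eqref{eq:2-13}, while the right-hand side is
\begin{equation*}
\mathbf{v}_i^T \mathbf{R}_i \mathbf{B} \mathbf{u} \;=\; (\mathbf{R}_i^T \mathbf{v}_i)^T \mathbf{B} \mathbf{u} \;=\; b(u, R_i^\top v_i),
\end{equation*}
using that $\mathbf{R}_i^T \mathbf{v}_i$ is the coordinate vector of $R_i^\top v_i \in V^h$. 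Thus $w_i \in V_i$ satisfies the very relation \eqref{eq:2-20} that characterises $T_i u$; by unique solvability of the local problem (granted by the discussion to follow in Section~\ref{subsec:solvability}, implicitly assumed here for the proposition to make sense), we conclude $w_i = T_i u$.

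Summing over $i$ and applying $\mathbf{R}_i^T$ then gives $\mathbf{M}_{AS,2}^{-1}\mathbf{B}\mathbf{u} = \sum_{i=0}^{N} \mathbf{R}_i^T (T_i u) = \sum_{i=0}^N R_i^\top T_i u = Tu$ at the level of coordinate vectors, and the identity \eqref{eq:equivalence} follows. There is no real obstacle here: the argument is a pure bookkeeping exercise matching matrix operations with their operator counterparts. The only point demanding care is the consistency of the two uses of $\mathbf{B}_i$: on one hand as $\mathbf{R}_i\mathbf{B}\mathbf{R}_i^T$, on the other as the stiffness matrix of $b_{\Omega_i}$ on $V_i$; this consistency is precisely what \eqref{eq:2-13} encodes.
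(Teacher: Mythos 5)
Your argument is correct and is exactly the standard bookkeeping identification that the paper leaves implicit (the proposition is stated there without a written proof): you identify $\mathbf{B}_i^{-1}\mathbf{R}_i\mathbf{B}\mathbf{u}$ with the coefficient vector of $T_iu$ via \eqref{eq:2-13} and the defining relation \eqref{eq:2-20}, sum over $i$ using \eqref{eq:2-21}, and use the symmetry of $a$ so that $\langle\cdot,\cdot\rangle_{\mathbf{A}}$ reproduces $(\cdot,\cdot)_a$ on coefficient vectors. The only point to keep in mind is the usual convention that $\mathbf{R}_i^T$ (in particular $\mathbf{R}_0^T$, whose columns are the fine-basis coefficient vectors of the coarse basis functions) is the coordinate realisation of $R_i^\top$, which is precisely how the paper, and you, read ``matrix representation of $R_i$''.
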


\subsection{GenEO coarse space}
\label{subsec:GenEO}
In this subsection we introduce the GenEO coarse space first proposed
in \cite{spillane:2014}.

For $1\leq j\leq N$, let $\tilde{n}_{j}$ be the
dimension of $\widetilde{V}_{j}$ and let
$\{\phi^{j}_{1}, 
\ldots,\phi^{j}_{\tilde{n}_{j}}\}$ be a
nodal basis of~$\widetilde{V}_{j}$. We first define the local partition of
unity operator.

\begin{definition}[Partition of unity]\label{def:2-0}
  Let $\,\tmop{dof}(\Omega_j)$ denote the internal degrees of freedom on subdomain $\Omega_j$. For any degree of freedom $i$, let $\mu_{i}$ denote the number of subdomains for which $i$ is an internal degree of freedom, i.e.,
\begin{equation}\label{eq:2-15}
\mu_{i}:=\#\{j\;:\;1\leq j\leq N,\;\;i\in \tmop{dof}(\Omega_j)\}.
\end{equation}
Then, for $1\leq j\leq N$, the local partition of unity operator $\Xi_{j} \colon \widetilde{V}_{j}\rightarrow V_{j}$ is defined by
\begin{equation}\label{eq:2-16}
\displaystyle \Xi_{j}(v) :=\sum_{i\in \tmop{dof}(\Omega_j)}\frac{1}{\mu_i} v_{i}\phi^{j}_{i},\quad {\rm for}\; {\rm all} \;\; v= \sum_{i=1}^{\tilde{n}_{j}}v_{i}\phi^{j}_{i} \in \widetilde{V}_{j}.
\end{equation}
\end{definition}

The operators $\Xi_{j}$ form a partition of unity in the following sense \cite{spillane:2014}:
\begin{equation}\label{eq:2-17}
\displaystyle \sum_{j=1}^{N} R_{j}^{\top} \Xi_{j}(v|_{\Omega_j}) = v,\quad {\rm for}\; {\rm all} \;\; v \in V^{h}.
\end{equation}
The local generalized eigenproblems that form the basis of the
  GenEO coarse space are now introduced.
\begin{definition}\label{def:2-1}
For each $j=1,\ldots,N$ we define the following generalized eigenvalue problem:
\begin{equation}\label{eq:2-18}
\text{Find} \quad (p, \lambda) \in {\tV}_j\backslash \{0\} \times \mathbb{R} \quad \text{such that} \quad      a_{\Omega_{j}}(p,v) = \lambda \,a_{\Omega_{j}}(\Xi_{j}(p), \Xi_{j}(v)),\quad {\rm for}\;\,{\rm all}\;\,v\in \widetilde{V}_{j},
\end{equation}
where $\Xi_{j}$ is the local partition of unity operator from Definition~\ref{def:2-0}.
\end{definition}

Note that there is some flexibility in the choice of the bilinear form
on the right hand side of \eqref{eq:2-18}. The one chosen here is not
the same as the one in the original paper \cite{spillane:2014}. However, it is
identical to the one considered in \cite[Chapter~7]{dolean:2015}. See also
\cite{peter} for an analysis of both choices as well as a further one.

\begin{definition}\label{def:2-2}
(GenEO coarse space). For each $j=1,\ldots,N$, let
$(p_{l}^{j})_{l=1}^{m_{j}}$ be the eigenfunctions corresponding to the
$m_{j}$ smallest eigenvalues in (\ref{eq:2-18}).  We then define the coarse space $V_0$ by
\begin{equation}\label{eq:2-19}
V_0 := {\rm span}\{R_{j}^{\top}\Xi_{j}(p_{l}^{j}) \colon l=1,\ldots,m_{j} \text{ and } j=1,\ldots,N\}.
\end{equation}
\end{definition}

In \cite{spillane:2014}, a two-level overlapping Schwarz preconditioner with
this GenEO coarse space was analysed for self-adjoint and coercive
problems, in particular for \eqref{eq:2-7} with $\mathbf{b} = 0$
and $c(x) \ge 0$. It was
shown there that a conjugate gradient (CG) iteration applied
to the preconditioned system \eqref{eq:2-21} converges in a number of
iterations that is independent of $h$, $H$, and $\delta$, as well as of
the variation of the coefficients $A(x)$ and $c(x)$, in
particular independent of $a_{\max}/a_{\min}$.

The result in the self-adjoint and coercive case holds without any
constraints on $H$ or $\Theta$. This cannot be expected in the
indefinite or non-self-adjoint case. However, by combining the
ideas from \cite{cai} and \cite{spillane:2014}, a coefficient-robust theory for
two-level overlapping Schwarz with GenEO coarse space can still
be developed, as we will see in the following two sections.

\section{Theoretical tools}\label{sec-2}

\subsection{Basic properties of GenEO coarse space}
\label{subsec:basic}
In this subsection, we give some important properties of the GenEO coarse space for subsequent use. The following lemma gives an  error estimate for a local projection operator approximating a function $v\in \widetilde{V}_{j}$ in the space spanned   by the eigenfunctions in \eqref{eq:2-18}.
\begin{lemma}[{\cite[Lemma 2.11]{spillane:2014}}]\label{lem:3-1}
Let $j\in\{1,\ldots,N\}$ and $\{(p_{l}^{j},\lambda_{l}^{j})\}_{l=1}^{\dim(V_{j})}$ be the eigenpairs of the generalized eigenproblem \eqref{eq:2-18}. Suppose that $m_{j}\in\{1,\ldots, \dim(V_{j})-1\}$ is such that $0< \lambda_{m_{j}+1}^{j} < \infty$. Then the local projection operator $\Pi_{m_j}^{j}$,  defined by
\begin{equation}\label{eq:3-1}
\Pi_{m_j}^{j} v: = \sum_{l=1}^{m_j}a_{\Omega_{j}}(\Xi_{j}(v), \Xi_{j}(p_l^j))\,p_l^j ,
\end{equation}
satisfies the  estimates
\begin{align}
\Vert v - \Pi_{m_j}^{j} v \Vert_{a, \Omega_j} \ \leq \ \Vert v \Vert_{a, \Omega_j},
\quad \text{and}  \quad \big\Vert \Xi_{j}(v- \Pi_{m_j}^{j} v) \big\Vert_{a, \Omega_{j}}^{2} \leq \frac{1}{\lambda_{m_{j}+1}^{j}}\big\Vert v- \Pi_{m_j}^{j} v  \big\Vert_{a, \Omega_{j}}^{2},
\label{eq:3-2}
\end{align}
for all $v\in \widetilde{V}_{j}$.
\end{lemma}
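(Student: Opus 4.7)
The plan is to build a doubly-orthogonal spectral basis for the generalized eigenvalue problem \eqref{eq:2-18} and then read off both estimates directly from the expansion coefficients. Since the bilinear forms $a_{\Omega_j}(\cdot,\cdot)$ and $a_{\Omega_j}(\Xi_j(\cdot),\Xi_j(\cdot))$ are both symmetric and positive semi-definite on $\widetilde{V}_j$, standard simultaneous-diagonalisation arguments for the pencil give eigenpairs $\{(p_l^j,\lambda_l^j)\}$ whose eigenfunctions can be chosen to satisfy
\begin{equation*}
a_{\Omega_j}\!\bigl(\Xi_j(p_l^j),\Xi_j(p_k^j)\bigr)=\delta_{lk},\qquad
a_{\Omega_j}(p_l^j,p_k^j)=\lambda_l^j\,\delta_{lk},
\end{equation*}
for the eigenfunctions associated with finite eigenvalues, with infinite eigenvalues (if any) corresponding to vectors in the kernel of $\Xi_j$. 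After listing the finite eigenvalues in non-decreasing order, any $v\in\widetilde{V}_j$ admits an expansion $v=\sum_l c_l\,p_l^j$, and the normalisation gives $c_l=a_{\Omega_j}(\Xi_j(v),\Xi_j(p_l^j))$ for every finite-eigenvalue mode. Consequently $\Pi_{m_j}^j v=\sum_{l\le m_j}c_l\,p_l^j$, i.e. $\Pi_{m_j}^j$ is exactly the truncation of the expansion to the first $m_j$ eigenfunctions.

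For the first bound I would use the $a_{\Omega_j}$-orthogonality of the $p_l^j$: the range of $\Pi_{m_j}^j$ and the residual $v-\Pi_{m_j}^j v=\sum_{l>m_j}c_l\,p_l^j$ are $a_{\Omega_j}$-orthogonal, so a Pythagorean identity gives $\|v\|_{a,\Omega_j}^2=\|\Pi_{m_j}^j v\|_{a,\Omega_j}^2+\|v-\Pi_{m_j}^j v\|_{a,\Omega_j}^2$, from which $\|v-\Pi_{m_j}^j v\|_{a,\Omega_j}\le\|v\|_{a,\Omega_j}$.

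For the second bound I would evaluate both norms using the expansion. The two orthogonality relations give
\begin{equation*}
\bigl\|\Xi_j(v-\Pi_{m_j}^j v)\bigr\|_{a,\Omega_j}^2 \;=\;\sum_{l>m_j} c_l^2,\qquad
\bigl\|v-\Pi_{m_j}^j v\bigr\|_{a,\Omega_j}^2 \;=\;\sum_{l>m_j} \lambda_l^j\,c_l^2.
\end{equation*}
Since the finite eigenvalues are arranged in non-decreasing order, $\lambda_l^j\ge\lambda_{m_j+1}^j$ for every $l>m_j$, so factoring out $\lambda_{m_j+1}^j$ and comparing the two sums delivers the stated bound
$\|\Xi_j(v-\Pi_{m_j}^j v)\|_{a,\Omega_j}^2\le(\lambda_{m_j+1}^j)^{-1}\|v-\Pi_{m_j}^j v\|_{a,\Omega_j}^2$.

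The main obstacle is the bookkeeping around the null spaces of the two forms, since $a_{\Omega_j}$ is in general only a seminorm on $\widetilde{V}_j$ (when $c^+\equiv 0$, constants lie in its kernel on an interior subdomain) and $a_{\Omega_j}(\Xi_j(\cdot),\Xi_j(\cdot))$ may itself be degenerate and give rise to infinite eigenvalues. One has to verify that these kernels do not obstruct the simultaneous diagonalisation used above, that eigenfunctions attached to infinite eigenvalues contribute nothing to $\Xi_j(v)$ and so may be absorbed into the residual without breaking either identity, and that the assumption $\lambda_{m_j+1}^j<\infty$ guarantees the separation used in the Rayleigh-quotient estimate. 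Once these technicalities are handled in the standard GenEO way (cf.\ \cite{spillane:2014}), the two inequalities in \eqref{eq:3-2} follow as above.
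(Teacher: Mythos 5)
Your proposal is essentially the paper's own route: the paper does not prove Lemma \ref{lem:3-1} at all but quotes it from \cite[Lemma 2.11]{spillane:2014}, and the proof there is precisely the simultaneous-diagonalisation argument you give (eigenvectors $a_{\Omega_j}(\Xi_j\cdot,\Xi_j\cdot)$-orthonormal and $a_{\Omega_j}$-orthogonal, $\Pi_{m_j}^j$ as truncation, Pythagoras for the first bound and the eigenvalue ordering for the second). Your reconstruction is correct, and the only delicate points --- the degeneracy of $a_{\Omega_j}$ on $\widetilde{V}_j$ and the $\ker\Xi_j$ (infinite-eigenvalue) modes, whose presence turns your second displayed identity for $\Vert v-\Pi_{m_j}^j v\Vert_{a,\Omega_j}^2$ into an inequality of the favourable sign --- are exactly the technicalities you flag and defer to the same reference the paper cites.
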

Building on these local error estimates, we can now establish
a global approximation property for the GenEO coarse space. This property is
a crucial element of the  robustness proof for  the two-level Schwarz
method for the indefinite problem.
Before proceeding, we formally define the integer $k_0$, which measures
the number of subdomains any element can belong to, as
\begin{align}
k_{0} = \max_{\tau\in \mathcal{T}_{h}}\big(\#\{\Omega_{j}: 1\leq j\leq
N, \;\tau \subset \Omega_{j}\}\big).  \label{eq:defk0}
\end{align}
\begin{lemma}\label{lem:3-3}
Under the same conditions as Lemma \ref{lem:3-1},  let $v\in V^h$. Then
\begin{equation}\label{eq:3-5}
\inf_{z\in V_{0}}\Vert v-z\Vert^{2}_{a} \leq k_{0}^{2}\Theta \Vert v\Vert^{2}_{a} , \quad \text{where} \quad
\Theta = \left( \min_{1\leq j\leq N} \lambda_{m_{j}+1}^{j}\right)^{-1}.
\end{equation}
\end{lemma}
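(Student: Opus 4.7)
The plan is to construct an explicit candidate $z \in V_0$ and bound $\|v - z\|_a^2$ using the local estimates of Lemma \ref{lem:3-1} together with the finite overlap parameter $k_0$. The natural choice is
\[
z \;:=\; \sum_{j=1}^N R_j^\top \,\Xi_j\!\left( \Pi_{m_j}^{j}(v|_{\Omega_j}) \right),
\]
which lies in $V_0$ by Definition \ref{def:2-2}, since each $\Pi_{m_j}^{j}(v|_{\Omega_j})$ is a linear combination of the eigenfunctions $p_l^j$ and $\Xi_j$ and $R_j^\top$ act as prescribed.

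The second step is to invoke the partition-of-unity identity \eqref{eq:2-17} to write $v = \sum_j R_j^\top \Xi_j(v|_{\Omega_j})$, so that
\[
v - z \;=\; \sum_{j=1}^N R_j^\top \,\Xi_j\!\left( (I - \Pi_{m_j}^{j})(v|_{\Omega_j}) \right).
\]
Restricting to any element $\tau \in \mathcal{T}_h$, only those indices $j$ with $\tau \subset \Omega_j$ contribute, and by \eqref{eq:defk0} there are at most $k_0$ such indices. A Cauchy--Schwarz inequality in this (at most $k_0$-term) sum therefore gives
\[
\|v - z\|_{a,\tau}^2 \;\leq\; k_0 \sum_{j:\,\tau \subset \Omega_j} \big\| \Xi_j\!\left( (I - \Pi_{m_j}^{j})(v|_{\Omega_j}) \right) \big\|_{a,\tau}^{2}.
\]

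Summing over $\tau \in \mathcal{T}_h$ converts the inner sum into a sum over all subdomains (each term restricted to $\Omega_j$), yielding
\[
\|v - z\|_a^2 \;\leq\; k_0 \sum_{j=1}^N \big\| \Xi_j\!\left( (I - \Pi_{m_j}^{j})(v|_{\Omega_j}) \right) \big\|_{a,\Omega_j}^{2}.
\]
Now I would apply the two bounds of Lemma \ref{lem:3-1} successively: the second inequality of \eqref{eq:3-2} contributes the factor $1/\lambda_{m_j+1}^j \leq \Theta$, and the first inequality of \eqref{eq:3-2} replaces $\|(I - \Pi_{m_j}^{j})(v|_{\Omega_j})\|_{a,\Omega_j}^2$ by $\|v\|_{a,\Omega_j}^2$. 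This gives
\[
\|v - z\|_a^2 \;\leq\; k_0\, \Theta \sum_{j=1}^N \|v\|_{a,\Omega_j}^2.
\]

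The final step is to re-use the finite overlap property once more: since each element $\tau$ is contained in at most $k_0$ of the $\Omega_j$, the sum $\sum_j \|v\|_{a,\Omega_j}^2$ counts each elementwise contribution at most $k_0$ times, giving $\sum_j \|v\|_{a,\Omega_j}^2 \leq k_0 \|v\|_a^2$. Combining yields the desired bound $\|v-z\|_a^2 \leq k_0^2 \Theta \|v\|_a^2$, and taking the infimum over $z \in V_0$ finishes the proof. There is no serious analytic obstacle here; the main thing to be careful about is accounting for the two separate factors of $k_0$ (one from Cauchy--Schwarz on each element, one from the finite overlap in the global sum), which together produce the $k_0^2$ in the statement.
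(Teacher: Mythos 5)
Your proposal is correct and follows essentially the same route as the paper: the same candidate $z_0=\sum_j R_j^\top\Xi_j(\Pi_{m_j}^j v|_{\Omega_j})$, the partition-of-unity identity \eqref{eq:2-17}, the finite-overlap Cauchy--Schwarz bound giving one factor of $k_0$ (which the paper simply cites as a standard estimate and you derive elementwise), then the two inequalities of Lemma \ref{lem:3-1} and a second finite-overlap count producing $k_0^2\Theta$. No gaps.
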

\begin{proof}
Given $v \in V^h$, we define
\begin{align}\label{eq:z0}    z_{0}:= \sum_{j=1}^{N} \Xi_{j}(\Pi_{m_j}^{j} v|_{\Omega_j}).
\end{align}
Then, making use of \eqref{eq:2-17}, we have
\begin{equation}\label{eq:3-6}
\Vert v-z_{0}\Vert^{2}_{a}= \left\Vert \sum_{j=1}^{N} \Xi_{j}(v|_{\Omega_j}- \Pi_{m_j}^{j} v|_{\Omega_j}) \right\Vert^{2}_{a} \leq k_{0} \sum_{j=1}^{N}\Vert\Xi_{j}(v\vert_{\Omega_j}- \Pi_{m_j}^{j} v_{\Omega_j}) \Vert^{2}_{a,\Omega_j},
\end{equation}
where in the last step we used a standard estimate, e.g., \cite[Equation (2.11)]{spillane:2014}. Then,  using Lemma~\ref{lem:3-1}, we have
\begin{equation}\label{eq:3-7}
\begin{split}
\Vert v-z_{0}\Vert^{2}_{a} &\leq k_{0} \sum_{j=1}^{N}\Vert\Xi_{j}(v- \Pi_{m_j}^{j} v) \Vert^{2}_{a,\Omega_j} \leq  k_{0} \sum_{j=1}^{N}\frac{1}{\lambda_{m_{j}+1}^{j}}\big\Vert v- \Pi_{m_j}^{j} v  \big\Vert_{a, \Omega_{j}}^{2} \\
&\leq k_{0} \sum_{j=1}^{N}\frac{1}{\lambda_{m_{j}+1}^{j}}\Vert v\Vert_{a, \Omega_{j}}^{2} \leq k_{0}^{2}\max_{1\leq j \leq N}\frac{1}{\lambda_{m_{j}+1}^{j}} \Vert v\Vert^{2}_{a},
\end{split}
\end{equation}
which implies \eqref{eq:3-5} as desired.
\end{proof}

The  following lemma can be proved by combining Lemma 2.9 of \cite{spillane:2014}
with Lemma \ref{lem:3-1} above.
It shows that  the
GenEO coarse space combined with the local finite element subspaces admit a
stable decomposition. Such a property is the usual tool for bounding the condition
number of the two-level Schwarz preconditioner in the classical positive
definite case. This property is also a key ingredient in this
work, since the indefinite problem is treated
as a perturbation of the associated positive definite problem in the convergence analysis.
\begin{lemma}[Stable decomposition]\label{lem:3-2}
Let $v\in V^h$, then the decomposition
\begin{equation}\label{eq:3-3}
z_{0}:= \sum_{j=1}^{N} \Xi_{j}(\Pi_{m_j}^{j} v|_{\Omega_j}), \quad z_{j} := \Xi_j(v|_{\Omega_j} - \Pi_{m_j}^{j} v|_{\Omega_j}),\quad {\rm for}\;\, j=1,\ldots,N,
\end{equation}
satisfies $\displaystyle v = \sum_{j=0}^{N}z_j$ and
\begin{equation}\label{eq:3-4}
\Vert z_{0}\Vert^{2}_{a} + \sum_{j=1}^{N}\Vert z_{j}\Vert^{2}_{a,\Omega_j}\leq \beta_{0}^{2}(k_0\Theta^{1/2})\Vert v\Vert_{a}^{2}, \quad \text{where} \quad   \beta_{0}(z)  = 2 \sqrt{1+z^2}, \quad \text{for} \ z \in \mathbb{R}^+ .
\end{equation}
\end{lemma}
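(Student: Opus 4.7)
My strategy is to verify the identity $v = \sum_{j=0}^N z_j$ directly from the partition-of-unity property \eqref{eq:2-17}, then bound the fine-scale contributions $z_j$ ($j\ge 1$) by a direct application of Lemma~\ref{lem:3-1}, and finally bound the coarse contribution $z_0$ via the triangle inequality applied to $z_0 = v - \sum_{j\ge 1} R_j^\top z_j$. The only nontrivial ingredient is the finite-overlap constant $k_0$, which enters twice and is responsible for the $k_0^2$ in $\beta_0$.

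The identity is immediate: the $\Pi_{m_j}^j$ pieces in $z_0$ cancel with the corresponding pieces in the $z_j$ for $j\ge 1$, leaving $\sum_{j=1}^N R_j^\top \Xi_j(v|_{\Omega_j})$, which equals $v$ by \eqref{eq:2-17}. For the fine-scale part, I would apply the second bound of Lemma~\ref{lem:3-1} followed by the first one to get
\begin{equation*}
\|z_j\|_{a,\Omega_j}^2 \le \frac{1}{\lambda_{m_j+1}^j}\,\|v|_{\Omega_j} - \Pi_{m_j}^j v|_{\Omega_j}\|_{a,\Omega_j}^2 \le \frac{1}{\lambda_{m_j+1}^j}\,\|v\|_{a,\Omega_j}^2.
\end{equation*}
Summing over $j$ and using the pointwise inequality $\sum_{j=1}^N \mathbf{1}_{\Omega_j} \le k_0$, from \eqref{eq:defk0}, to obtain $\sum_{j=1}^N \|v\|_{a,\Omega_j}^2 \le k_0\|v\|_a^2$, I arrive at $\sum_{j=1}^N \|z_j\|_{a,\Omega_j}^2 \le k_0\Theta\|v\|_a^2$.

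For $z_0$, from the triangle inequality and the Cauchy--Schwarz-type finite-overlap estimate already invoked in \eqref{eq:3-6} I get $\|\sum_{j=1}^N R_j^\top z_j\|_a^2 \le k_0 \sum_{j=1}^N \|z_j\|_{a,\Omega_j}^2 \le k_0^2\Theta\|v\|_a^2$, so that $\|z_0\|_a \le (1+k_0\Theta^{1/2})\|v\|_a$. Squaring and invoking $(1+t)^2 \le 2(1+t^2)$ gives $\|z_0\|_a^2 \le 2(1+k_0^2\Theta)\|v\|_a^2$, and since $k_0\Theta \le 2(1+k_0^2\Theta)$ for $k_0\ge 1$, adding the two estimates yields
\begin{equation*}
\|z_0\|_a^2 + \sum_{j=1}^N \|z_j\|_{a,\Omega_j}^2 \le 4(1+k_0^2\Theta)\,\|v\|_a^2 = \beta_0^2(k_0\Theta^{1/2})\,\|v\|_a^2,
\end{equation*}
as required.

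There is no real obstacle here; the argument is bookkeeping built on top of Lemma~\ref{lem:3-1}. The only point requiring some attention is keeping straight the two conceptually different uses of $k_0$—once on the restricted energy norms of $v$, and once on a sum of zero-extended local functions—since it is their combination that gives the $k_0^2$ appearing inside $\beta_0$.
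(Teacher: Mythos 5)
Your argument is correct. Note, however, that the paper does not actually write out a proof of Lemma~\ref{lem:3-2}: it simply states that the result follows by combining Lemma~2.9 of \cite{spillane:2014} with Lemma~\ref{lem:3-1}, and the remark after the lemma records that the cited argument yields \eqref{eq:3-4} with $\beta_0^2(k_0\Theta^{1/2})$ replaced by $2+k_0(2k_0+1)\Theta$. Your self-contained derivation follows exactly the route that citation encodes: the telescoping identity via the partition of unity \eqref{eq:2-17}, the local bound $\Vert z_j\Vert_{a,\Omega_j}^2\le\Theta\Vert v\Vert_{a,\Omega_j}^2$ from the two estimates in \eqref{eq:3-2}, the finite-overlap inequalities $\sum_j\Vert v\Vert_{a,\Omega_j}^2\le k_0\Vert v\Vert_a^2$ and $\Vert\sum_j R_j^\top z_j\Vert_a^2\le k_0\sum_j\Vert z_j\Vert_{a,\Omega_j}^2$ (the latter being the same estimate invoked in \eqref{eq:3-6}), and the triangle inequality for $z_0=v-\sum_{j\ge1}R_j^\top z_j$. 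Indeed, before you apply $(1+t)^2\le2(1+t^2)$, your intermediate constant is precisely $2+2k_0^2\Theta+k_0\Theta=2+k_0(2k_0+1)\Theta$, matching the sharper constant quoted in the paper's remark, and your final relaxation to $4(1+k_0^2\Theta)=\beta_0^2(k_0\Theta^{1/2})$ (using $k_0\ge1$) is exactly the simplification the authors adopt. So what your write-up buys is an explicit, checkable proof in place of the external reference, at no loss in the constant; there is no gap.
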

\noindent {\bf Remark.} \   The result from \cite{spillane:2014} actually yields \eqref{eq:3-4} with $\beta_0^2(k_0\Theta^{1/2})$ replaced by $2 + k_0(2k_0+1)\Theta$. We give the estimate in the form  \eqref{eq:3-4} for simplicity,  showing that the key parameter is $k_0\Theta^{1/2}$.

As is well-known, the stable decomposition of Lemma \ref{lem:3-2} provides the theoretical basis for the analysis of preconditioners for systems arising from \eqref{eq:2-1} in the special case when $\mathbf{b}$ and $c$ vanish. This can be expressed by introducing the projection operators $P_{i} \colon V^h\rightarrow V^{i}$ such that
\begin{equation}\label{eq:4-4}
a_{\Omega_{i}}(P_{i}u, v) = a(u,R_{i}^{\top}v),\quad \forall v\in V^{i},\quad i=0,1,\ldots,N,
\end{equation}
These operators are well-defined, as shown in \cite[Section 2.2]{toselli}. Moreover, defining $P \colon V^h\rightarrow V^h$ as
\begin{equation}\label{eq:4-5}
P:=\sum_{i=0}^{N}R_{i}^{\top} P_{i},
\end{equation}
then using the stable decomposition property \eqref{eq:3-4}, it was proved in \cite[Section 2.3]{toselli} that any $u\in V^h$ satisfies
\begin{equation}\label{eq:4-6}
\beta_{0}^{-2}(k_0\Theta^{1/2})a(u,u)\leq a(Pu,u),
\end{equation}
and
\begin{equation}\label{eq:4-7}
\sum_{i=0}^{N}\Vert P_{i}u\Vert_{a,\Omega_{i}}^{2}\leq (k_{0}+1)\Vert u\Vert_{a}^{2}.
\end{equation}


%

\subsection{Solvability of subproblems}
\label{subsec:solvability}
In this subsection, we prove that the operators $T_i$ are well-defined by \eqref{eq:2-20}.
Stability properties of these operators are then analysed in Section \ref{sec:stability}.
The analysis builds again on the result of \cite{schatz-1}, given here in Lemma \ref{lem:s1}.
Since the underlying PDE is not coercive, this requires some discussion and
the imposition of some resolution conditions. Recalling items (ii) and (iv) from Assumption \ref{ass:1}, we can define
$$\cinf  = \mathrm{essinf}\{ c^-(x): \ x \, \in \Omega\}.$$
Note that, if $\cinf \geq 0$, then $c = c^- + c^+$ is almost everywhere non-negative.

For the well-posedness and stability of the local problems on each subdomain $\Omega_i$, we will need a coefficient-dependent resolution condition on $H$. We specify this via the function $C_0$, depending on $\mathbf{b}$ and $c^-$, given by
\begin{align} \label{eq:defC0new}
C_0(\mathbf{b}, c^-): = \left(\frac{1}{2} \Vert \mathbf{b} \Vert_\infty^2 \ +\ \max\{ - \cinf,0\}\right)^{1/2},
\end{align}
and note that $C_0$ vanishes when $\cinf = 0$ and $\mathbf{b} = \mathbf{0}$.

At various points in the following analysis we will use the standard inequality
\begin{align} \label{eq:standard}
pq \ \leq\  \frac{1}{2 \varepsilon} p^2 + \frac{\varepsilon}{2} q^2, \quad \text{valid for all} \quad  p,q \in \mathbb{R} \quad \text{and} \quad  \varepsilon > 0.
\end{align}

\begin{lemma}[$T_i$ is well-defined for each $i = 1,\ldots,N$] \label{lem:s2}
Suppose $H \, C_0(\mathbf{b}, c^-) < 1/2$. Then the
operators $T_{i}$, $i=1,\ldots,N$, are well-defined.
In particular, each $T_i$ is well-defined without a condition on $H$ when   $\cinf \geq  0$ and  $\mathbf{b}  = \mathbf{0}$.
\end{lemma}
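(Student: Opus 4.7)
Since $V_i$ is finite-dimensional and $T_i$ is defined by a square linear system, showing that $T_i$ is well-defined is equivalent to showing that the bilinear form $b_{\Omega_i}(\cdot,\cdot)$ is non-degenerate on $V_i \times V_i$. It suffices, in fact, to establish coercivity: $b_{\Omega_i}(w,w) \gtrsim \|w\|_{a,\Omega_i}^2$ for all $w \in V_i$. I will prove this directly under the hypothesis $H\, C_0(\mathbf{b}, c^-) < 1/2$.

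First, expand using \eqref{eq:2-10}:
\begin{equation*}
b_{\Omega_i}(w,w) = \|w\|_{a,\Omega_i}^2 + (\mathbf{b}\cdot\nabla w, w)_{\Omega_i} + (c^- w, w)_{\Omega_i}.
\end{equation*}
Applying \eqref{eq:standard} with $\varepsilon = 1$ to the convection term gives
\begin{equation*}
|(\mathbf{b}\cdot\nabla w, w)_{\Omega_i}| \leq \tfrac{1}{2}\|\nabla w\|_{L^2(\Omega_i)}^2 + \tfrac{1}{2}\|\mathbf{b}\|_\infty^2 \|w\|_{L^2(\Omega_i)}^2,
\end{equation*}
and since $a_{\min} = 1$ and $c^+ \geq 0$, we have $\|\nabla w\|_{L^2(\Omega_i)}^2 \leq \|w\|_{a,\Omega_i}^2$. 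For the reaction term, $(c^- w, w)_{\Omega_i} \geq \cinf \|w\|_{L^2(\Omega_i)}^2$, which is only useful when $\cinf < 0$; in any case it is bounded below by $-\max\{-\cinf, 0\}\|w\|_{L^2(\Omega_i)}^2$. Combining these with the definition \eqref{eq:defC0new} of $C_0$ yields
\begin{equation*}
b_{\Omega_i}(w,w) \geq \tfrac{1}{2}\|w\|_{a,\Omega_i}^2 - C_0(\mathbf{b},c^-)^2 \|w\|_{L^2(\Omega_i)}^2.
\end{equation*}

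Since $w \in V_i$ vanishes on $\partial\Omega_i$, the Friedrichs estimate \eqref{eq:energy_est} applied on $\Omega_i$ (of diameter $H_i \leq H$) gives $\|w\|_{L^2(\Omega_i)}^2 \leq \tfrac{H^2}{2}\|w\|_{a,\Omega_i}^2$, whence
\begin{equation*}
b_{\Omega_i}(w,w) \geq \tfrac{1}{2}\bigl(1 - H^2 C_0(\mathbf{b},c^-)^2\bigr)\|w\|_{a,\Omega_i}^2.
\end{equation*}
Under the hypothesis $H\, C_0(\mathbf{b},c^-) < 1/2$, the prefactor is at least $3/8 > 0$, so $b_{\Omega_i}$ is coercive on $V_i$ and $T_i$ is well-defined. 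For the final claim, when $\cinf \geq 0$ and $\mathbf{b} = \mathbf{0}$ one has $C_0 = 0$, so the term involving $\|w\|_{L^2(\Omega_i)}^2$ drops out entirely and no restriction on $H$ is needed.

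The argument is essentially routine; the only mild subtlety is recognizing that the factor $\tfrac{1}{2}$ in the definition of $C_0$ is precisely what arises from choosing $\varepsilon = 1$ in \eqref{eq:standard}, and that the combination of Friedrichs with $a_{\min} = 1$ converts the $L^2$ bound into an $\|\cdot\|_{a,\Omega_i}$ bound with constant $H/\sqrt{2}$, explaining the factor $H^2 C_0^2$ in the final coercivity constant.
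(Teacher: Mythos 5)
Your proposal is correct and follows essentially the same route as the paper: reduce well-definedness of the square linear system to showing $b_{\Omega_i}$ cannot vanish on a nonzero element of $V_i$, and establish this by an energy estimate combining the Friedrichs inequality \eqref{eq:energy_est} with the smallness hypothesis $H\,C_0(\mathbf{b},c^-)<1/2$. The only (harmless) difference is cosmetic: you prove full coercivity, handling the convection term with \eqref{eq:standard} at $\varepsilon=1$ so that $C_0^2$ appears directly (as the paper itself does in the proof of Lemma \ref{lem:s2b}, cf.\ \eqref{eq:zerogeq}), whereas the paper's proof of this lemma applies Friedrichs to the convection term directly, obtaining the bound $\bigl(\tfrac{H}{\sqrt{2}}\Vert\mathbf{b}\Vert_\infty+\tfrac{H^2}{2}\max\{-\cinf,0\}\bigr)\Vert\phi_i\Vert_{a,\Omega_i}^2$; both uses of the hypothesis yield the same conclusion.
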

\begin{proof}
We need to show that, for each $u\in V^h$ and $i=1,\ldots,N$, the discrete problem
\begin{equation}\label{eq:s1-11}
\text{Find} \quad   \phi_i \in V_i \quad \text{such that} \quad  b_{\Omega_{i}}(\phi_i, v) = b(u,R_{i}^{\top}v),\quad \forall v\in V_{i}
\end{equation}
has a unique solution $\phi_i$. Since \eqref{eq:s1-11} is equivalent to a square linear system, it is sufficient to prove that, when   $u=0$,  we only have the trivial solution. To do so, let $\phi_i\in V_i$ solve the problem
\begin{equation}\label{eq:s1-12}
b_{\Omega_{i}}(\phi_i, v) = 0,\quad \text{for all} \quad  v\in V_{i}.
\end{equation}
Then, taking $v = \phi_i$ in \eqref{eq:s1-12} and using the fact that
$-c^-(x) \leq \max \{-\cinf, 0\}$  for almost all $x \in \Omega$
and then \eqref{eq:energy}, we find
\begin{align}
\Vert {\phi_i}\Vert_{a,\Omega_{i}}^{2} &= -(\mathbf{b}\cdot\nabla {\phi_i}, {\phi_i})_{\Omega_{i}} - ({c^-}{\phi_i}, {\phi_i})_{\Omega_{i}} \nonumber \\
&\leq \Vert \mathbf{b} \Vert_{L^{\infty}(\Omega_{i})} \Vert \nabla {\phi_i} \Vert_{L^{2}(\Omega_i)} \Vert {\phi_i} \Vert_{L^{2}(\Omega_i)} + \max \{-\cinf, 0\} \Vert {\phi_i} \Vert^{2}_{L^{2}(\Omega_{i})} \nonumber \\
&\leq \frac{H}{\sqrt{2}}\Vert \mathbf{b} \Vert_\infty \Vert \nabla {\phi_i} \Vert^{2}_{L^{2}(\Omega_i)} + \frac{H^{2}}{2} \max \{-\cinf, 0\} \Vert \nabla {\phi_i} \Vert^{2}_{L^{2}(\Omega_{i})} \nonumber \\
&\leq \left(\frac{H}{\sqrt{2}}\Vert \mathbf{b}\Vert_\infty + \frac{H^{2}}{2}  \max \{-\cinf, 0\}\right)\Vert {\phi_i}\Vert_{a,\Omega_{i}}^{2}. \label{eq:phizero}
\end{align}
Now it is easy to see that the condition $H \, C_0(\mathbf{b}, c^-) < 1/2$ ensures that
$$ \frac{H}{\sqrt{2}} \Vert \mathbf{b} \Vert_\infty < \frac{1}{2} \quad \text{and} \quad \frac{H^2}{2} \max\{-\cinf, 0\} < 1/8.$$
Thus \eqref{eq:phizero} implies
$$\Vert \phi_i \Vert_{a,\Omega_{i}}^{2} \leq \frac{5}{8}  \Vert \phi_i\Vert_{a,\Omega_{i}}^{2}, $$
and it follows that $\phi_i = 0$.
\end{proof}
Now, to show that $T_0$ is also well-defined, we need to impose a resolution condition on $\Theta$, specified using the function $C_0(\mathbf{b}, c^-)$, together with another coefficient-dependent function
\begin{align}
C_{1}(\mathbf{b}, c^-) := 1 + \Vert \mathbf{b}\Vert_{\infty} +\Vert c^- \Vert_{\infty}. \label{eq:s1-8a}
\end{align}
Recall also the quantities $k_0$, defined in \eqref{eq:defk0}, and $\Cstab^{\ast}$, defined in Assumption \ref{ass:2}(ii).

\begin{lemma}[$T_0$ is well-defined] \label{lem:s2b}
Suppose that $$[(1+\Cstab^*) \, C_0(\mathbf{b},c^-)\,C_1(\mathbf{b},c^-) k_0 ] \Theta^{1/2} < 1/\sqrt{2}.$$ Then there exists $h_* >0$ such that, for all
$h\leq h_*$, the operator $T_{0}$ is well-defined. In particular, $T_0$ is well-defined, without
condition on $\Theta$, when $\cinf \geq  0$ and  $\mathbf{b}  = \mathbf{0}$.
\end{lemma}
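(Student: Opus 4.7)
The plan is to mimic the strategy of Lemma~\ref{lem:s2}: since $V_0$ is finite-dimensional, problem \eqref{eq:2-20} with $i=0$ reduces to a square linear system, so it suffices to show that any $\phi_0 \in V_0$ with $b(\phi_0, v) = 0$ for all $v \in V_0$ must be zero. Testing against $\phi_0$ itself gives $\|\phi_0\|_a^2 = -(\mathbf{b}\cdot\nabla\phi_0,\phi_0) - (c^-\phi_0,\phi_0)$; applying Cauchy--Schwarz, the bound $-c^-(x) \le \max\{-\cinf,0\}$, and the inequality \eqref{eq:standard} with $\varepsilon = 1$ (in parallel with \eqref{eq:phizero}, but \emph{without} invoking the local Friedrichs inequality), yields the direct estimate
\begin{equation*}
\|\phi_0\|_a \ \le\ \sqrt{2}\,C_0(\mathbf{b}, c^-)\,\|\phi_0\|.
\end{equation*}
In the special case $\cinf \ge 0$, $\mathbf{b}=\mathbf{0}$ we have $C_0=0$ and both terms on the right of the identity above vanish, so $\phi_0=0$ follows directly without any constraint on $\Theta$.

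For the general case I would close the estimate by duality. Let $w\in H^1_0(\Omega)$ solve the adjoint problem $b(v,w) = (\phi_0, v)$ for all $v\in H^1_0(\Omega)$, so that $\|w\|_a \le \Cstab^*\|\phi_0\|$ by Assumption~\ref{ass:2}(ii) and $\|\phi_0\|^2 = b(\phi_0,w)$. Let $w_h \in V^h$ be the Galerkin approximation of $w$: by Lemma~\ref{lem:s1} and the remark converting to the $a$-norm thereafter, for any prescribed $\varepsilon > 0$ there is $h_*>0$ such that $\|w - w_h\|_a \le \varepsilon\|\phi_0\|$ when $h\le h_*$. By Lemma~\ref{lem:3-3} applied to $w_h$ there exists $z\in V_0$ with $\|w_h - z\|_a \le k_0\Theta^{1/2}\|w_h\|_a$, and because $b(\phi_0,z)=0$ we have
\begin{equation*}
\|\phi_0\|^2 \;=\; b(\phi_0,\,w - w_h)\; +\; b(\phi_0,\,w_h - z).
\end{equation*}
A short continuity estimate---using $\|\nabla u\|\le \|u\|_a$ (since $a_{\min}=1$) together with the global Friedrichs bound \eqref{eq:energy_est} (valid as $D_\Omega\le 1$)---delivers $|b(u,v)| \le C_1(\mathbf{b},c^-)\|u\|_a\|v\|_a$ for $u,v\in H^1_0(\Omega)$. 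Combining this with $\|w_h\|_a \le (\Cstab^*+\varepsilon)\|\phi_0\|$ and the earlier bound on $\|\phi_0\|_a$ produces
\begin{equation*}
\|\phi_0\|^2 \ \le\ \sqrt{2}\,C_0 C_1\,\bigl(\varepsilon(1 + k_0\Theta^{1/2}) + k_0\Theta^{1/2}\,\Cstab^*\bigr)\,\|\phi_0\|^2.
\end{equation*}

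The hypothesis $\sqrt{2}(1+\Cstab^*)C_0C_1k_0\Theta^{1/2} < 1$ in particular forces $\sqrt{2}\,C_0C_1k_0\Theta^{1/2}\Cstab^* < 1$ with strictly positive reserve, so choosing $\varepsilon$ (and hence $h_*$) sufficiently small shrinks the bracketed quantity below $1/(\sqrt{2}\,C_0C_1)$ and forces $\phi_0=0$. The main obstacle is balancing the FE truncation error against the coarse-space approximation error: this is exactly why the statement features the factor $(1+\Cstab^*)$ rather than the bare $\Cstab^*$ one might naively expect---the extra ``$1$'' is the slack needed to absorb the $\varepsilon$-contribution once $\varepsilon$ is taken comparable to a fraction of $k_0\Theta^{1/2}$.
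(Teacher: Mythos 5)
Your proposal is correct and takes essentially the same route as the paper: reduction to uniqueness of the square system, the G\aa rding-type bound $\Vert \phi_0\Vert_a \le \sqrt{2}\,C_0(\mathbf{b},c^-)\Vert \phi_0\Vert$ obtained by testing with $\phi_0$, and a duality argument combining the Schatz--Wang estimate of Lemma~\ref{lem:s1}, the adjoint stability of Assumption~\ref{ass:2}(ii), the continuity bound with $C_1(\mathbf{b},c^-)$, and the coarse-space approximation of Lemma~\ref{lem:3-3}. The only difference is bookkeeping: the paper tests the \emph{discrete} adjoint equation directly, so $\Vert\phi_0\Vert^2 = b(\phi_0, w_h - z)$ with no separate term $b(\phi_0, w-w_h)$, and fixes the finite element tolerance to $1$, which yields $\Vert w_h\Vert_a \le (1+\Cstab^*)\Vert\phi_0\Vert$ --- this is where the ``$+1$'' actually originates (rather than being slack for the $\varepsilon$-term), and it has the minor advantage that $h_*$ comes from Lemma~\ref{lem:s1} alone and hence does not depend on $\Theta$ or $k_0$, whereas your choice of $\varepsilon$ as a fraction of $k_0\Theta^{1/2}$ makes $h_*$ decomposition-dependent (though a $\Theta$-independent choice of $\varepsilon$ would also close your argument).
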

\begin{proof}
Analogously to Lemma \ref{lem:s2},
it is sufficient to consider any solution $\phi$ to the problem
\begin{equation}\label{eq:s1-14}
\text{Find} \quad  \phi_0 \in V_0\quad  \text{such that} \quad  b({\phi_0}, v) = 0,\quad \text{for all} \quad  v\in V_{0},
\end{equation}
and then to  show that  ${\phi_0} = 0$.
To do so, given $\phi_0$, we first note that
\begin{align}
0 &= b(\phi_0, \phi_0) \ = \ a(\phi_0 , \phi_0) +  (c^-\phi_0, \phi_0) + (\mathbf{b} \cdot \nabla \phi_0, \phi_0) \nonumber \\
&\geq \Vert \phi_0 \Vert_a^2 + \cinf \Vert \phi_0 \Vert^2 + (\mathbf{b} \cdot \nabla \phi_0 , \phi_0) \nonumber \\
&\geq \Vert \phi_0 \Vert_a^2 + \cinf \Vert \phi_0 \Vert^2 -  \Vert \mathbf{b} \Vert_\infty \Vert  \nabla \phi_0 \Vert \, \Vert \phi_0 \Vert \nonumber \\
&\geq \Vert \phi_0 \Vert_a^2 + \cinf \Vert \phi_0 \Vert^2 - \frac{1}{2} \left( \Vert \phi_0\Vert_a^2 + \Vert \mathbf{b} \Vert_\infty ^2 \Vert \phi_0 \Vert^2 \right) \nonumber \\
&= \frac{1}{2} \Vert \phi_0 \Vert_a^2  - \left( \frac{1}{2} \Vert \mathbf{b} \Vert_\infty^2 - \cinf\right) \Vert \phi_0 \Vert^2 \nonumber \\
&\geq \frac{1}{2} \Vert \phi_0\Vert_a^2 - C_0(\mathbf{b},c^-)^2 \Vert \phi_0\Vert^2, \label{eq:zerogeq}
\end{align}
where in the third step we used \eqref{eq:standard} and \eqref{eq:energy}, and in the final step we used the definition of $C_0(\mathbf{b},c^-)$ in \eqref{eq:defC0new}.

To finish the proof we use a duality argument to estimate
$\Vert \phi_0 \Vert$ from above, in terms of $\Vert \phi_0 \Vert_a$. As such, let $w$ solve the auxiliary problem:
\begin{equation}\label{eq:s1-15}
\text{Find} \quad  w \in H_0^{1}(\Omega) \quad
\text{such that} \quad  b(v,{w}) = ({\phi_0}, v),\quad \text{for all} \ v\in H_{0}^{1}(\Omega).
\end{equation}
Further, let $w_h \in V^h$ denote its finite element approximation, i.e.,
\begin{equation}
\label{eq:FEM}
b(v,{w_{h}}) = ({\phi_0}, v),\quad \text{for all} \quad  v\in V^h.
\end{equation}

Applying Lemma~\ref{lem:s1} to the adjoint problem \eqref{eq:s1-15},
we see that there exists an $h_\ast >0$ such that, for all $0<h<h_\ast$,
\begin{equation}\label{eq:s1-16}
\Vert  w-w_{h} \Vert_{a} \leq \Vert {\phi_0} \Vert\,.
\end{equation}
(See also the discussion following Lemma~\ref{lem:s1}.) Now, combining
\eqref{eq:s1-16} and  Assumption \ref{ass:2}(ii), it follows that
\begin{equation}\label{eq:s1-16-0}
\Vert w_{h}\Vert_{a}\leq (\Cstab^{\ast}+1)\Vert {\phi_0} \Vert.
\end{equation}
Since $V_0 \subset V^h$, we can substitute  $v = {\phi_0}$ in \eqref{eq:FEM} and then use
\eqref{eq:s1-14} and \eqref{eq:energy} to obtain, for any ${z} \in
V_{0}$,
\begin{align}
\Vert {\phi_0}\Vert^{2} &= b({\phi_0},{w}_{h}) = b({\phi_0}, {w_h - z}) \nonumber \\
&\leq\Vert {\phi_0} \Vert_{a} \Vert {w_h - z}\Vert_{a} + \Vert \mathbf{b}\Vert_\infty \Vert \nabla \phi_0\Vert \Vert w_h - z\Vert + \Vert c^- \Vert_\infty \Vert \phi_0\Vert \Vert w_h - z\Vert \nonumber \\
&\leq \left(1+\Vert \mathbf{b}\Vert_\infty D_{\Omega} + \Vert c^- \Vert_\infty D^{2}_{\Omega}\right) \Vert {\phi_0} \Vert_{a} \Vert {w_h - z}\Vert_{a}. \label{eq:s1-17}
\end{align}
Thus, using Assumption \ref{ass:1}(iii), we have
\begin{equation}\label{eq:s1-17-0}
\Vert {\phi_0}\Vert^{2} \leq C_{1}(\mathbf{b}, c^-)\Vert {\phi_0} \Vert_{a} \Vert {w_h - z}\Vert_{a}, \quad\text{for all} \quad z \in V_0.
\end{equation}

Now, from \eqref{eq:s1-17-0} using Lemma~\ref{lem:3-3} and then \eqref{eq:s1-16-0}, we further have
\begin{align}
\Vert {\phi_0} \Vert^{2} &\leq {C_{1}(\mathbf{b}, c^-)} \Vert {\phi_0}\Vert_{a} \inf_{{z} \in V_{0}}\Vert {w_{h}- z} \Vert_{a} \leq {C_{1}(\mathbf{b}, c^-) k_{0}} \Theta^{\frac{1}{2}} \Vert {\phi_0}\Vert_{a} \Vert {w}_{h}\Vert_{a} \nonumber \\
&\leq (1+\Cstab^*) C_{1}(\mathbf{b},c^-) k_0 \Theta^{1/2} \Vert {\phi_0}\Vert_{a} \Vert {\phi_0}\Vert, \label{eq:s1-18}
\end{align}
and thus
\begin{equation}\label{eq:s1-19}
\Vert {\phi_0} \Vert \leq (1+ \Cstab^*) C_{1}(\mathbf{b},c^-)k_0 \Theta^{1/2} \Vert {\phi_0}\Vert_{a}.
\end{equation}
Finally, inserting \eqref{eq:s1-19} into \eqref{eq:zerogeq}, we obtain
\begin{align*}
0 &\geq \left(1/2 - K^2 \right) \Vert \phi_0 \Vert_a^2,
\end{align*}
where
\begin{align} \label{eq:defbeta}
K := (1+\Cstab^*) \, C_0(\mathbf{b},c^-)\,C_1(\mathbf{b},c^-) k_0 \Theta^{1/2},
\end{align}
and $\phi_0 = 0$ follows necessarily when $K < 1/\sqrt{2}$.
\end{proof}

\subsection{Stability estimates for $T_{i}$}\label{sec:stability}
In this subsection, we prove the stability estimates for the
operators $T_{i}$ that will be required in the following section to
establish the robustness of the two-level additive Schwarz method.

\begin{lemma}[Stability for $T_i$, $i = 1,\ldots,N$] \label{lem:4-0}
Suppose $H \, C_0(\mathbf{b}, c^-) < 1/4$. Then for all $u \in V^h$ and all $i \in \{ 1,\ldots,N\}$
\begin{equation}\label{eq:4-9}
\Vert T_{i}u \Vert_{a, \Omega_{i}} \le 2 \sqrt{2} \Vert u\Vert_{a,\Omega_{i}} +  2 \sqrt{\frac{2}{3}} H  \Vert c^- \Vert_{L^\infty(\Omega_i)}\, \Vert u\Vert_{L^2(\Omega_{i})}.
\end{equation}
\end{lemma}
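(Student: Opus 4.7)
Set $\phi := T_i u \in V_i$. By the definition \eqref{eq:2-20} of $T_i$, applied with test function $v = \phi$,
\begin{equation*}
b_{\Omega_i}(\phi,\phi) \ = \ b(u, R_i^\top \phi) \ = \ b_{\Omega_i}(u,\phi),
\end{equation*}
since $R_i^\top\phi$ is the zero extension of $\phi$ and so has support in $\overline{\Omega_i}$. The strategy is to bound the left-hand side from below by a multiple of $\Vert\phi\Vert_{a,\Omega_i}^2$ and the right-hand side from above in the two ingredients appearing on the right-hand side of \eqref{eq:4-9}, then divide.

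\textbf{Lower bound for $b_{\Omega_i}(\phi,\phi)$.} Using the expansion \eqref{eq:2-10}, the bound $-c^-(x)\le \max\{-\cinf,0\}$, the Cauchy--Schwarz and Friedrichs estimates exactly as in \eqref{eq:phizero}, and the energy estimate \eqref{eq:energy_est} applied on $\Omega_i$, I get
\begin{equation*}
b_{\Omega_i}(\phi,\phi) \ \geq\ \Vert\phi\Vert_{a,\Omega_i}^2 \ -\ \Big(\tfrac{H}{\sqrt 2}\Vert\mathbf b\Vert_\infty \,+\, \tfrac{H^2}{2}\max\{-\cinf,0\}\Big)\Vert\phi\Vert_{a,\Omega_i}^2.
\end{equation*}
The assumption $HC_0(\mathbf b,c^-)<1/4$ implies separately $\tfrac{H}{\sqrt 2}\Vert\mathbf b\Vert_\infty<1/4$ and $\tfrac{H^2}{2}\max\{-\cinf,0\}<1/32$ (by expanding $H^2 C_0^2<1/16$ and distributing). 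Hence the parenthesised factor is at most $9/32<1/2$, giving coercivity of $b_{\Omega_i}$ on $V_i$ with a constant bounded below by a fixed fraction (say $\geq 1/2$, or more sharply $23/32$).

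\textbf{Upper bound for $b_{\Omega_i}(u,\phi)$.} Split it via \eqref{eq:2-10} into three terms and apply Cauchy--Schwarz. For the $\mathbf b$-term, bound $\Vert\phi\Vert_{L^2(\Omega_i)}\le \tfrac{H}{\sqrt 2}\Vert\phi\Vert_{a,\Omega_i}$ by \eqref{eq:energy_est}, producing a $\tfrac{H}{\sqrt 2}\Vert\mathbf b\Vert_\infty$ prefactor that is absorbed into a constant multiple of $\Vert u\Vert_{a,\Omega_i}\Vert\phi\Vert_{a,\Omega_i}$. For the $c^-$-term, only apply the Friedrichs bound to the factor containing $\phi$, keeping $\Vert u\Vert_{L^2(\Omega_i)}$ intact, which produces the $H\Vert c^-\Vert_{L^\infty(\Omega_i)}\Vert u\Vert_{L^2(\Omega_i)}$ contribution. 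This gives
\begin{equation*}
b_{\Omega_i}(u,\phi) \ \leq\ \Big[\big(1+\tfrac{H}{\sqrt 2}\Vert\mathbf b\Vert_\infty\big)\Vert u\Vert_{a,\Omega_i} \,+\, \tfrac{H}{\sqrt 2}\Vert c^-\Vert_{L^\infty(\Omega_i)}\Vert u\Vert_{L^2(\Omega_i)}\Big]\,\Vert\phi\Vert_{a,\Omega_i}.
\end{equation*}

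\textbf{Combining.} Dividing the upper bound by $\Vert\phi\Vert_{a,\Omega_i}$ and by the coercivity constant from the lower bound yields $\Vert\phi\Vert_{a,\Omega_i}\le C\Vert u\Vert_{a,\Omega_i}+C'H\Vert c^-\Vert_{L^\infty(\Omega_i)}\Vert u\Vert_{L^2(\Omega_i)}$, with $C,C'$ explicit functions of the ratios $\tfrac{H}{\sqrt 2}\Vert\mathbf b\Vert_\infty$ and $\tfrac{H^2}{2}\max\{-\cinf,0\}$. Under $HC_0<1/4$, a routine numerical check shows these constants are dominated by $2\sqrt 2$ and $2\sqrt{2/3}$, respectively, giving \eqref{eq:4-9}.

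\textbf{Main obstacle.} The proof itself is a direct perturbation argument; the only real work is the constant-chasing to reach the clean bounds $2\sqrt 2$ and $2\sqrt{2/3}$. The hypothesis $HC_0<1/4$ (tighter than the $HC_0<1/2$ needed merely for solvability in Lemma~\ref{lem:s2}) is exactly the slack that makes these explicit constants come out finite and moderate. No further ingredients beyond Assumption \ref{ass:1}, the Friedrichs estimate \eqref{eq:energy_est}, and the splitting \eqref{eq:b1} are required.
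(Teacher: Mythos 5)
Your proposal is correct, and it rests on the same essential ingredients as the paper's proof: the identity $b_{\Omega_i}(T_iu,T_iu)=b_{\Omega_i}(u,T_iu)$, the splitting \eqref{eq:2-10}, the Friedrichs estimate \eqref{eq:energy_est} applied only to the factor lying in $V_i\subset H^1_0(\Omega_i)$ (keeping $\Vert u\Vert_{L^2(\Omega_i)}$ intact, exactly as the paper does), and the bound \eqref{eq:phizero} for the convection/reaction perturbation acting on $T_iu$. Where you differ is in the organisation of the constant-chasing. The paper estimates each term of $b_{\Omega_i}(u,T_iu)$ with Young's inequality so that everything becomes quadratic, absorbs the $\Vert T_iu\Vert_{a,\Omega_i}^2$ contributions (the hypothesis $H\,C_0<1/4$ makes the absorbed factor at most $5/16$), and ends with $\tfrac{3}{16}\Vert T_iu\Vert_{a,\Omega_i}^2\le \tfrac32\Vert u\Vert_{a,\Omega_i}^2+\tfrac{H^2}{2}\Vert c^-\Vert_{L^\infty(\Omega_i)}^2\Vert u\Vert_{L^2(\Omega_i)}^2$; taking square roots via $\sqrt{x+y}\le\sqrt{x}+\sqrt{y}$ is precisely what produces $2\sqrt2=\sqrt8$ and $2\sqrt{2/3}=\sqrt{8/3}$ in \eqref{eq:4-9}. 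You instead keep the estimate linear in $\Vert T_iu\Vert_{a,\Omega_i}$: coercivity of $b_{\Omega_i}$ on $V_i$ with constant at least $1-\tfrac{9}{32}=\tfrac{23}{32}$ (your deductions $\tfrac{H}{\sqrt2}\Vert\mathbf b\Vert_\infty<\tfrac14$ and $\tfrac{H^2}{2}\max\{-\cinf,0\}<\tfrac1{32}$ from $H\,C_0<1/4$ are right), boundedness, and division. Carrying out the ``routine numerical check'' you defer: this gives $\Vert T_iu\Vert_{a,\Omega_i}\le \tfrac{32}{23}\bigl(\tfrac54\Vert u\Vert_{a,\Omega_i}+\tfrac{1}{\sqrt2}H\Vert c^-\Vert_{L^\infty(\Omega_i)}\Vert u\Vert_{L^2(\Omega_i)}\bigr)$, i.e.\ constants $\tfrac{40}{23}\approx 1.74$ and $\tfrac{16\sqrt2}{23}\approx 0.98$, both below $2\sqrt2$ and $2\sqrt{2/3}$, so \eqref{eq:4-9} follows and your route in fact yields a slightly sharper bound. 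The only cosmetic points are to dispose of the trivial case $T_iu=0$ before dividing, and to note that the paper's quadratic intermediate inequality \eqref{eq:stabTi} is what is quoted in squared form later (e.g.\ in \eqref{eq:4-30}), so if one adopted your linear argument in the paper those squared bounds would simply be obtained by squaring your estimate.
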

\begin{proof}
Let $i\in\{1,\ldots,N\}$. Using the fact that
$b_{\Omega_{i}}(T_{i}u,T_{i}u)=b_{\Omega_{i}}(u, T_{i}u)$, we find
\begin{align}
\Vert T_{i}u \Vert_{a, \Omega_{i}}^{2} &=  a(T_iu, T_iu) \nonumber \\
&= b_{\Omega_{i}}(T_iu, T_{i}u) -\left[ (\mathbf{b}\cdot\nabla T_{i}u, T_{i}u) + (c^-T_{i}u, T_{i}u)_{\Omega_{i}} \right] \nonumber \\
&= b_{\Omega_{i}}(u, T_{i}u) -\left[(\mathbf{b}\cdot\nabla T_{i}u, T_{i}u) + (c^-T_{i}u, T_{i}u)_{\Omega_{i}}\right] \nonumber \\
\label{eq:Tiu}
\begin{split}
& =a_{\Omega_i}(u,T_i u) + (\mathbf{b}\cdot \nabla u, T_i u)_{\Omega_{i}} + (c^-u, T_i u)_{\Omega_{i}} \\
& \qquad - \left[ (\mathbf{b}\cdot\nabla T_{i}u, T_{i}u)_{\Omega_{i}} + (c^-T_{i}u, T_{i}u)_{\Omega_{i}}\right].
\end{split}
\end{align}
We estimate each of the terms in \eqref{eq:Tiu} as follows, using \eqref{eq:energy} and \eqref{eq:standard}, to obtain
\begin{subequations}
\label{eq:t}
\begin{align} \label{eq:t1}
a_{\Omega_i} (u,T_i u) &\leq \frac{1}{4} \Vert T_i u \Vert_{a, \Omega_i} ^2 + \Vert u \Vert_{a, \Omega_i}^2, \\
(\mathbf{b} \cdot \nabla u , T_i u) & \leq \Vert \mathbf{b} \Vert_{L^\infty(\Omega_i)} \Vert u \Vert_{a,\Omega_i} \Vert T_i u \Vert_{L^2(\Omega_i)} \leq \frac{H}{\sqrt{2}} \Vert \mathbf{b} \Vert_\infty  \Vert T_i u \Vert_{a, \Omega_i}\Vert u \Vert_{a,\Omega_i} \nonumber \\
&\leq \frac{H^2}{4} \Vert \mathbf{b} \Vert_{L^\infty(\Omega_i)}^2 \Vert T_i u \Vert_{a, \Omega_i}^2 + \frac{1}{2} \Vert u \Vert_{a, \Omega_i}^2, \label{eq:t2} \\
\text{and} \quad (c^-u, T_iu)_{\Omega_i} & \leq \frac{H}{\sqrt{2}} \Vert T_i u \Vert_{a, \Omega_i} \, \Vert c^- \Vert_{L^\infty(\Omega_i)} \Vert u \Vert_{L^2(\Omega_i)} \nonumber \\
& \leq \frac{1}{4} \Vert T_i u \Vert_{a, \Omega_i}^2 + \frac{H^2}{2} \Vert c^-\Vert_{L^\infty(\Omega_i)}^2 \Vert u \Vert _{L^2(\Omega_i)}^2. \label{eq:t3}
\end{align}
\end{subequations}
Combining \eqref{eq:t} with \eqref{eq:Tiu} and estimating the final term in \eqref{eq:Tiu} using \eqref{eq:phizero}, we obtain
\begin{align*}
\Vert T_i u \Vert_{a, \Omega_i}^2 &\leq \left(\frac{1}{2} + \frac{H^2}{4} \Vert \mathbf{b} \Vert_{L^\infty(\Omega_i)}^2  \right) \Vert T_i u \Vert_{a, \Omega_i}^2 \\
& \qquad + \left( \frac{3}{2} \Vert u \Vert_{a, \Omega_i} ^2 + \frac{H^2}{2} \Vert c^- \Vert_{L^\infty(\Omega_i)}^2 \Vert u \Vert_{L^2(\Omega_i)}^2 \right) \\
& \qquad + \left(\frac{H}{\sqrt{2}} \Vert \mathbf{b} \Vert_{L^\infty(\Omega_i)} + \frac{H^2}{2} \max \{ -\cinf, 0\} \right) \Vert T_i u \Vert_{a, \Omega_i}^2.
\end{align*}
Hence, we deduce that
\begin{align}
\label{eq:stabTi}
\begin{split}
\frac{1}{2}  \Vert T_i u \Vert_{a, \Omega_i}^2 & \leq \left[\frac{H}{\sqrt{2}}\Vert \mathbf{b} \Vert_\infty  + \frac{H^2}{4} \Vert \mathbf{b} \Vert_\infty^2 + \frac{H^2}{2} \max\{ -\cinf, 0\} \right]  \Vert T_i u \Vert_{a, \Omega_i}^2 \\
& \qquad + \left(\frac{3}{2} \Vert u \Vert_{a, \Omega_i} ^2 + \frac{H^2}{2} \Vert c^- \Vert_{L^\infty(\Omega_i)}^2   \Vert u \Vert_{L^2(\Omega_i)}^2 \right).
\end{split}
\end{align}
Arguing as in the proof of Lemma \ref{lem:s2}, it is now easy to see that the assumption $H \, C_0(\mathbf{b}, c^-) < 1/4$ ensures that the term in the square brackets in \eqref{eq:stabTi} is bounded above by $5/16$. Thus we have
\begin{align*}
\frac{3}{16}  \Vert T_i u \Vert_{a, \Omega_i}^2 \leq   \left(\frac{3}{2} \Vert u \Vert_{a, \Omega_i} ^2 + \frac{H^2}{2} \Vert c^- \Vert_{L^\infty(\Omega_i)}^2   \Vert u \Vert_{L^2(\Omega_i)}^2 \right),
\end{align*}
from which the result \eqref{eq:4-9} follows.
\end{proof}

We now examine the stability of $T_0$ in the following lemma, for which we need another constant, namely
\begin{align} \label{eq:defC2}
C_2(\mathbf{b}, c^-) := 1 + \Vert \mathbf{b} \Vert_{\infty} + \Vert \nabla \cdot \mathbf{b} \Vert_{\infty} + \Vert {c^-} \Vert_{\infty}.
\end{align}

\begin{lemma}[Stability for $T_0$]\label{lem:4-2}
Suppose that
\begin{align}
\label{eq:4-0}
(1+ \Cstab^*) \, C_0(\mathbf{b},c^-) \, C_1(\mathbf{b},c^-) \, k_0 \Theta^{1/2} < \frac{1}{\sqrt{2}}.
\end{align}
Then there exists $h_*>0$ such that, for $h \leq h_*$,
\begin{equation}\label{eq:4-1}
\Vert T_{0}u - u\Vert \leq (1+\Cstab^*) \, C_{1}(\mathbf{b}, c^-) \, k_0 \Theta^{1/2} \, \Vert T_{0}u - u\Vert_{a}, \quad \text{for all} \quad u\in V^h.
\end{equation}
Suppose, in addition, that
\begin{align} \label{eq:addhyp}
(1+\Cstab^*) \, C_1(\mathbf{b}, c^-) \, C_2(\mathbf{b},c^-) \, k_0 \Theta^{1/2} < \frac{1}{2}.
\end{align}
Then
\begin{equation}\label{eq:4-11}
\Vert u - T_{0}u \Vert_{a} \le \sqrt{2} \Vert u \Vert_{a}, \quad \text{for all} \quad u\in V^h.
\end{equation}
\end{lemma}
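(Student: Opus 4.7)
For the first estimate \eqref{eq:4-1}, I would set $\phi := u-T_0u \in V^h$ and observe that the defining relation of $T_0$ yields the Galerkin orthogonality $b(\phi,v)=0$ for all $v\in V_0$. The plan is then to run an Aubin--Nitsche-style duality argument very close to \eqref{eq:s1-15}--\eqref{eq:s1-19} from the proof of Lemma~\ref{lem:s2b}: introduce the adjoint solution $w\in H_0^1(\Omega)$ to $b(v,w)=(\phi,v)$ and its Galerkin approximation $w_h\in V^h$. Applying Lemma~\ref{lem:s1} to the adjoint problem (for $h$ small enough) together with Assumption~\ref{ass:2}(ii) gives $\Vert w_h\Vert_a \le (1+\Cstab^*)\Vert\phi\Vert$. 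Since $\phi \in V^h$, I would write $\Vert\phi\Vert^2=b(\phi,w_h)=b(\phi,w_h-z)$ for any $z\in V_0$, bound $|b(\phi,w_h-z)|\le C_1(\mathbf{b},c^-)\Vert\phi\Vert_a\Vert w_h-z\Vert_a$ exactly as in \eqref{eq:s1-17}--\eqref{eq:s1-17-0} (using \eqref{eq:energy_est} and $D_\Omega\le1$), and finally minimise over $z\in V_0$ via Lemma~\ref{lem:3-3} to deliver \eqref{eq:4-1}.

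For \eqref{eq:4-11} my plan is a Garding-type identity, combined with orthogonality and a bootstrap based on \eqref{eq:4-1}. Starting from \eqref{eq:b1}, I would write
\[
\Vert\phi\Vert_a^2 \;=\; b(\phi,\phi) \;-\; (\mathbf{b}\cdot\nabla\phi,\phi) \;-\; (c^-\phi,\phi).
\]
Integration by parts on $\phi\in H_0^1(\Omega)$ gives $(\mathbf{b}\cdot\nabla\phi,\phi)=-\tfrac12(\nabla\!\cdot\!\mathbf{b}\,\phi,\phi)$, so the two lower-order terms collapse into a single $L^2$-quadratic form controlled by $\bigl(\tfrac12\Vert\nabla\!\cdot\!\mathbf{b}\Vert_\infty+\Vert c^-\Vert_\infty\bigr)\Vert\phi\Vert^2\le C_2(\mathbf{b},c^-)\Vert\phi\Vert^2$. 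For the $b(\phi,\phi)$ term I would exploit orthogonality: $b(\phi,\phi)=b(\phi,u)=b(\phi,u-w)$ for any $w\in V_0$. The key move is to estimate $|b(\phi,u-w)|$ via the integrated-by-parts form \eqref{eq:2-5} in the second argument, which together with \eqref{eq:energy_est} and $D_\Omega\le1$ produces the clean bound $|b(\phi,u-w)|\le C_2(\mathbf{b},c^-)\Vert\phi\Vert_a\Vert u-w\Vert_a$. Minimising via Lemma~\ref{lem:3-3} and substituting \eqref{eq:4-1} in the form $\Vert\phi\Vert^2\le K_1^2\Vert\phi\Vert_a^2$ with $K_1:=(1+\Cstab^*)C_1k_0\Theta^{1/2}$, will then yield
\[
\bigl(1-C_2K_1^2\bigr)\Vert\phi\Vert_a^2 \;\le\; C_2 k_0\Theta^{1/2}\,\Vert\phi\Vert_a\Vert u\Vert_a.
\]

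The last step is the bootstrap. The additional hypothesis \eqref{eq:addhyp} supplies $K_1C_2<1/2$, hence $K_1<1/2$ (since $C_2\ge 1$) and consequently $C_2K_1^2<1/4$ and $C_2k_0\Theta^{1/2}<1/2$. Plugging these in gives $\Vert\phi\Vert_a\le(1/2)/(3/4)\,\Vert u\Vert_a=\tfrac23\Vert u\Vert_a<\sqrt2\,\Vert u\Vert_a$, so $\sqrt2$ is simply the loosest clean constant that suffices for later use. The main subtlety I anticipate is the bookkeeping around the two equivalent forms \eqref{eq:b1} and \eqref{eq:2-5} of $b(\cdot,\cdot)$: the non-IBP form is required to reduce $\Vert\phi\Vert_a^2$ to $b(\phi,\phi)$ while isolating lower-order $L^2$-terms that can be absorbed by the first part \eqref{eq:4-1}, whereas the IBP form is essential when invoking orthogonality against $u-w$, since otherwise the right-hand side would involve an $H^2$-seminorm of $u$ rather than only $\Vert u\Vert_a$.
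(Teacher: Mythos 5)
Your proof of \eqref{eq:4-1} is essentially the paper's: the paper likewise sets up the auxiliary adjoint problem $b(v,w_h)=(T_0u-u,v)$ and observes that, since $b(T_0u-u,z)=0$ for all $z\in V_0$, the duality chain \eqref{eq:s1-15}--\eqref{eq:s1-19} from Lemma~\ref{lem:s2b} applies verbatim. For \eqref{eq:4-11}, however, you take a genuinely different and valid route. The paper works with the $a$-orthogonal projection $P_0$: it uses $a(T_0u,u-P_0u)=0$ and $b(u-T_0u,P_0u-T_0u)=0$, applies the integrated-by-parts form \eqref{eq:2-5} twice to reach $\Vert u-T_0u\Vert_a^2\le \Vert u\Vert_a^2+(1+\Cstab^*)C_1C_2k_0\Theta^{1/2}\Vert u-T_0u\Vert_a^2$, and absorbs the last term via \eqref{eq:addhyp}, which is exactly where the constant $\sqrt2$ comes from. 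You instead avoid $P_0$ altogether: the skew-symmetrisation $(\mathbf{b}\cdot\nabla\phi,\phi)=-\tfrac12((\nabla\cdot\mathbf{b})\phi,\phi)$ (legitimate under Assumption~\ref{ass:1}(ii), and the same manipulation underlying \eqref{eq:2-5}), Galerkin orthogonality $b(\phi,\phi)=b(\phi,u-w)$ for arbitrary $w\in V_0$, the bound $|b(\phi,u-w)|\le C_2\Vert\phi\Vert_a\Vert u-w\Vert_a$, Lemma~\ref{lem:3-3} applied to $u$, and the part-one estimate to absorb the $L^2$ term; the bookkeeping with \eqref{eq:addhyp} ($C_2K_1<1/2$, hence $K_1<1/2$, $C_2K_1^2<1/4$, $C_2k_0\Theta^{1/2}<1/2$) is correct and in fact delivers $\Vert u-T_0u\Vert_a\le\tfrac23\Vert u\Vert_a$, sharper than the paper's $\sqrt2$, because you exploit the coarse approximation property in the leading term where the paper only uses $\Vert u-P_0u\Vert_a\le\Vert u\Vert_a$. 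One small misconception, which does not affect correctness: the non-integrated-by-parts form \eqref{eq:b1} would not produce any $H^2$-seminorm of $u$ in $b(\phi,u-w)$ --- the gradient falls on $\phi$, not on $u-w$ --- and would in fact give the slightly better constant $C_1$ in place of $C_2$; your choice of \eqref{eq:2-5} is merely a harmless overestimate. You should also state explicitly (as the paper does) that \eqref{eq:4-0} and $h\le h_*$ guarantee via Lemma~\ref{lem:s2b} that $T_0$ is well-defined before the orthogonality is invoked.
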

\begin{proof}
Under condition \eqref{eq:4-0},
Lemma \ref{lem:s2b} ensures the existence of $h_*$ such that, when $h \leq h_*$,
$T_0 \colon V^h \rightarrow V_0$ is well-defined. We then consider the auxiliary problem:
\begin{equation}\label{eq:4-2}
\text{Find} \quad  w_h \in V^h \quad \text{such that} \quad b(v,w_{h}) = (T_{0}u-u, v),\quad \text{for all} \quad  v\in V^h.
\end{equation}
Then, since $b(T_0u - u,z) = 0 $ for all $z \in V_0$, \eqref{eq:4-1} follows by the same argument as used in the proof of \eqref{eq:s1-19}.

By the definition of $P_0$, we have $a(T_{0}u, u-P_{0}u)= 0$. Also, since $P_{0}u-T_{0}u \in V^{0}$, we have
$b(u-T_{0}u, P_{0}u-T_{0}u)= 0$. Thus, using these relations and \eqref{eq:2-5} (twice), we can write
\begin{align*}
\Vert u - T_{0}u\Vert_{a}^{2} &= b(u-T_{0}u, u-P_{0}u) + \big(\mathbf{b}\cdot \nabla(u-T_{0}u),\,u-T_{0}u\big)-(\tilde{c}(u-T_{0}u), u-T_{0}u) \\
&= a(u-T_{0}u, u-P_{0}u) + \big(\mathbf{b}\cdot\nabla(P_{0}u-T_{0}u),\,u-T_{0}u\big)+(\tilde{c}(u-T_{0}u), T_{0}u-P_{0}u) \\
&= a(u, u-P_{0}u) + \big(\mathbf{b}\cdot\nabla(P_{0}u-T_{0}u),\,u-T_{0}u\big)+(\tilde{c}(u-T_{0}u), T_{0}u-P_{0}u).
\end{align*}
Hence, using \eqref{eq:energy}, it follows that
\begin{align}
\label{eq:4-12}
\begin{split}
\Vert u - T_{0}u\Vert_{a}^{2} &\leq \Vert u\Vert_{a} \Vert u - P_{0}u \Vert_{a} + \Vert \mathbf{b}\Vert_{\infty}\Vert u - T_{0}u \Vert  \Vert P_{0}u - T_{0}u \Vert_{a} \\
& \qquad + \Vert \tilde{c}\Vert_{\infty}\Vert u - T_{0}u \Vert \Vert P_{0}u - T_{0}u \Vert.
\end{split}
\end{align}

Now, by the definition of the projection $P_{0}$, we have
\begin{equation}\label{eq:4-13}
\Vert u - P_{0}u \Vert_{a} \leq \Vert u\Vert_{a},\quad \text{and} \quad \Vert P_0 u - T_0 u \Vert \leq \Vert P_{0}u - T_{0}u \Vert_{a} = \Vert P_0(u - T_0u)\Vert_a \leq \Vert u - T_{0}u \Vert_{a}.
\end{equation}
Hence, using \eqref{eq:4-1} and \eqref{eq:4-13} in \eqref{eq:4-12}, we obtain
\begin{align*}
\Vert u - T_{0}u\Vert_{a}^{2} &\leq \Vert u \Vert_{a}^{2} + \left( \Vert \mathbf{b}\Vert_{\infty} + \Vert \tilde{c} \Vert_{\infty} \right) \Vert u - T_0u\Vert_a \, \Vert u - T_0 u\Vert \\
&\leq \Vert u \Vert_{a}^{2} + (1+\Cstab^*) \, C_1(\mathbf{b},c^-) \, C_2(\mathbf{b}, c^-) \, k_0 \Theta^{1/2} \, \Vert u - T_0u\Vert_a^2,
\end{align*}
and the result follows.
\end{proof}

\section{Main results}
\label{Proof}

In this section we now state and prove the main theoretical result of this
paper, as well as a corollary, on the robust GMRES convergence on the
preconditioned system when using the GenEO coarse space, under
certain conditions on the size of $\mathbf{b}$ and $c$. We first recall some important quantities defined in the preceding section.
\begin{align}
\label{eq:constants}
\begin{split}
k_{0} &= \max_{\tau\in\mathcal{T}_{h}} \left(\#\{\Omega_{j} : 1\leq j\leq N, \; \tau \subset \Omega_{j}\}\right), \\
\Theta &= \left( \min_{1\leq j\leq N} \lambda_{m_{j}+1}^{j} \right)^{-1}, \\
C_{0}(\mathbf{b}, c^-) &= \left( \frac{1}{2} \Vert \mathbf{b} \Vert_\infty^2 + \max\{ -\cinf,0 \} \right)^{1/2}, \\
C_{1}(\mathbf{b}, c^-) &= 1 + \Vert \mathbf{b} \Vert_{\infty} + \Vert c^- \Vert_{\infty}, \\
C_{2}(\mathbf{b}, c^-) &= 1 + \Vert \mathbf{b} \Vert_{\infty} + \Vert \nabla \cdot \mathbf{b} \Vert_{\infty} + \Vert c^- \Vert_{\infty}, \\
\beta_{0}(z) &= 2 \sqrt{1+z^2}.
\end{split}
\end{align}
\begin{theorem}\label{thm:2-1}
Assume that $h\leq h_*$, where $h_*$ is as defined in Lemma \ref{lem:s2b}. Suppose $H$ and $\Theta$ are chosen so that
\begin{equation}\label{eq:2-28}
s = s(\Theta) := 2 \sqrt{2} (1+\Cstab^*) \, C_1(\mathbf{b}, c^-) \, C_2(\mathbf{b}, c^-) \, k_0^{3/2} \, \beta^{2}_{0}(k_0\Theta^{1/2}) \, \Theta^{1/2} < 1,
\end{equation}
and
\begin{equation}\label{eq:2-29}
t = t(H,\Theta) := 16 H \, C_1(\mathbf{b}, c^-) \, k_{0} \, \beta^{2}_{0}(k_0\Theta^{1/2}) < 1.
\end{equation}
Then, for all $u\in V_{h}$,
\begin{equation}\label{eq:2-26}
c_{1}(H,\Theta) \, a(u,u) \leq a(Tu,u),
\end{equation}
and
\begin{equation}\label{eq:2-27}
a(Tu,Tu) \leq c_{2}(H,\Theta) \, a(u,u),
\end{equation}
where $c_{1}(H,\Theta)$ and $c_{2}(H,\Theta)$ are given by
\begin{align}\label{eq:2-30}
\begin{split}
c_{1}(H,\Theta) &= \beta_{0}^{-2}(k_0\Theta^{1/2})(1-\max\{t,\,s\}), \\
c_{2}(H,\Theta) &= 12 + 32 k_{0}^{2}.
\end{split}
\end{align}
\end{theorem}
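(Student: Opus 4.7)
My plan is to treat $T$ as a perturbation of the classical SPD additive Schwarz operator $P = \sum_{i=0}^{N} R_i^\top P_i$ from \eqref{eq:4-4}--\eqref{eq:4-5}, for which the spectral bounds \eqref{eq:4-6}--\eqref{eq:4-7} are already known to hold through the GenEO stable decomposition (Lemma~\ref{lem:3-2}). The differences $P_i u - T_i u$ are driven only by the non-self-adjoint and indefinite perturbations $\mathbf{b}$ and $c^-$, and the resolution hypotheses \eqref{eq:2-28}--\eqref{eq:2-29} will force them to be small enough to be absorbed into a lower bound for $a(Tu,u)$.

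\textbf{Lower bound.} Starting from \eqref{eq:4-6}, I write $\beta_0^{-2}(k_0\Theta^{1/2})\, a(u,u) \leq a(Pu,u) = a(Tu,u) + a((P-T)u,u)$ and split $a((P-T)u,u) = \sum_{i=0}^{N} a_{\Omega_i}((P_i-T_i)u,u|_{\Omega_i})$. For each $i \geq 1$, subtracting the defining equations for $P_i$ and $T_i$ yields
\begin{equation*}
a_{\Omega_i}(P_i u - T_i u, v) \;=\; -(\mathbf{b}\cdot\nabla(u - T_i u), v)_{\Omega_i} - (c^-(u - T_i u), v)_{\Omega_i}, \qquad v \in V_i.
\end{equation*}
Testing with $v = P_i u - T_i u$ and applying Lemma~\ref{lem:2-4-0} on $\Omega_i$, the stability bound of Lemma~\ref{lem:4-0}, and the finite overlap $\sum_i \|u\|_{a,\Omega_i}^2 \leq k_0 \|u\|_a^2$ (together with global Friedrichs via Assumption~\ref{ass:1}(iii) to dispose of lower-order $L^2$ terms) produces the contribution controlled by $t$. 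For the coarse block $i=0$, we have $b(u-T_0 u, v) = 0$ for $v \in V_0$ by definition of $T_0$; substituting the alternative expression \eqref{eq:2-5} for $b$, which integrates the convection term by parts and brings in $\tilde c = c^- - \nabla\cdot\mathbf{b}$, yields
\begin{equation*}
a(P_0 u - T_0 u, v) \;=\; (\mathbf{b}\cdot\nabla v, u - T_0 u) - (\tilde c\,(u - T_0 u), v), \qquad v \in V_0,
\end{equation*}
and testing with $v = P_0 u - T_0 u$, then applying the duality bound \eqref{eq:4-1} for $\|u-T_0 u\|$, global Friedrichs for $\|P_0 u - T_0 u\|$, and the stability bound \eqref{eq:4-11} for $\|u-T_0u\|_a$ produces the contribution controlled by $s$. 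Summing both parts and multiplying through by $\beta_0^{2}(k_0\Theta^{1/2})$ gives \eqref{eq:2-26} with $c_1$ as claimed.

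\textbf{Upper bound.} I split $Tu = T_0 u + \sum_{i\geq 1} R_i^\top T_i u$ and use $\|x+y\|^2 \leq 2\|x\|^2 + 2\|y\|^2$. For the coarse part, \eqref{eq:4-11} yields $\|T_0 u\|_a \leq (1+\sqrt{2})\|u\|_a$, contributing $2(1+\sqrt{2})^2 \leq 12$ to $c_2$. For the local sum, the standard coloring inequality $\|\sum_{i\geq 1} R_i^\top w_i\|_a^2 \leq k_0 \sum_i \|w_i\|_{a,\Omega_i}^2$ combined with Lemma~\ref{lem:4-0} applied termwise, global Friedrichs to bound $\|u\|_{L^2(\Omega_i)}^2$ by $\tfrac{1}{2}\|u\|_a^2$, and $\sum_i \|u\|_{a,\Omega_i}^2 \leq k_0 \|u\|_a^2$ give the $32 k_0^2 \|u\|_a^2$ piece. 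Together these deliver $c_2 = 12 + 32 k_0^2$.

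\textbf{Main obstacle.} The delicate step is the coarse correction $P_0 u - T_0 u$ in the lower bound. Being global, it cannot be controlled by a localised Friedrichs factor $H$ as the subdomain corrections are; instead, it requires reusing the Aubin--Nitsche-style duality machinery from the proof of Lemma~\ref{lem:4-2}, which brings in the adjoint stability constant $\Cstab^*$ and a factor $\Theta^{1/2}$ inherited from the coarse approximation property of Lemma~\ref{lem:3-3}. Integrating the convection term by parts is essential here: without it one would be left with $\|\mathbf{b}\|_\infty \|u-T_0 u\|_a$ unpaired with $\Theta^{1/2}$ and unable to be made small, and this is precisely why the divergence of $\mathbf{b}$ (through $\tilde c$, hence $C_2$ rather than $C_1$) appears in the definition of $s$. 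Balancing the two distinct error sources, local (governed by $H$) and coarse (governed by $\Theta^{1/2}$), is what produces the twin hypotheses $t<1$ and $s<1$.
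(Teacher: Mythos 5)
Your proposal is correct and follows essentially the same route as the paper's proof: both treat $T$ as a perturbation of the SPD Schwarz operator $P$ starting from \eqref{eq:4-6}, control the local corrections via Lemma~\ref{lem:4-0} together with the subdomain Friedrichs inequality (yielding the $t$-term), and control the coarse correction by integrating the convection term by parts and invoking \eqref{eq:4-1} and \eqref{eq:4-11} (yielding the $s$-term with $C_2$), with the same splitting, Lemma~\ref{lem:4-0}, and \eqref{eq:4-11} for the upper bound $c_2 = 12 + 32k_0^2$. The only difference is cosmetic: you bound $\Vert P_iu - T_iu\Vert_{a,\Omega_i}$ through explicit error equations and pair with $u$, whereas the paper bounds the perturbation terms paired with $P_iu$ and closes with \eqref{eq:4-7}; the stated constants are attainable either way.
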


\begin{proof}
We begin by showing that the assumptions \eqref{eq:2-28} and \eqref{eq:2-29} ensure
that the hypotheses of Lemmas \ref{lem:4-0} and \ref{lem:4-2} hold.
First, since $k_0 \geq 1$ and $\beta_0^2(k_0\Theta^{1/2}) \geq 2$, \eqref{eq:2-28} implies
\begin{align} \label{eq:4a}
(1+ \Cstab^*) \,  C_1(\mathbf{b},c^- ) \, C_2(\mathbf{b},c^-) \, k_0 \Theta^{1/2} < 1/(4\sqrt{2}).
\end{align}
Further, by definition \eqref{eq:defC0new} and using \eqref{eq:standard} we have
\begin{align*}
C_0^2(\mathbf{b},c^-) &\leq \frac{1}{2} \Vert \mathbf{b} \Vert_{\infty}^2 + \Vert c^- \Vert_{\infty} \leq \frac{1}{2} \left( \Vert \mathbf{b} \Vert_{\infty}^2 + \Vert c^- \Vert_{\infty}^2 + 1 \right) \\
&\leq \frac{1}{2} \left( \Vert \mathbf{b} \Vert_{\infty} + \Vert c^- \Vert_{\infty} + 1 \right)^2 \leq \frac{1}{2} C_2^2(\mathbf{b}, c^-),
\end{align*}
and so \eqref{eq:4a} implies $(1+ \Cstab^*) \, C_1(\mathbf{b},c^-) \, C_0(\mathbf{b},c^-) \, k_0 \Theta^{1/2} < 1/8$, and
both hypotheses \eqref{eq:4-0} and \eqref{eq:addhyp} of Lemma \ref{lem:4-2} are satisfied.

Likewise, hypothesis \eqref{eq:2-29} implies $H \, C_1(\mathbf{b},c^-) < 1/32$ and so we have
\begin{align} \label{eq:max}
\max\{ H \Vert \mathbf{b}\Vert_{\infty}, H \Vert c^-\Vert_{\infty} \} < 1/32.
\end{align}
Combining \eqref{eq:max} with a straightforward calculation and using $H \leq \mathrm{diam}(\Omega) \leq 1$, shows that the hypothesis of Lemma \ref{lem:4-0} holds. In the remainder of the proof we use the results of Lemmas \ref{lem:4-0} and \ref{lem:4-2} without further justification.

Let $u\in V_{h}$, we first prove \eqref{eq:2-26}.
Using \eqref{eq:4-5},\eqref{eq:4-6}, \eqref{eq:b1} and the definition of the projections $T_{i}$, we obtain
\begin{align*}
& \beta_{0}^{-2}(k_0\Theta^{1/2}) a(u,u) \leq \sum_{i=0}^{N}a(u, R_{i}^{ \top}P_{i}u) \\
& \mbox{\hspace{1cm}} = \sum_{i=0}^{N}b(u, R^{ \top}_{i}P_{i}u) - \sum_{i=0}^{N}\big[\big(\mathbf{b}\cdot\nabla u, R^{\top}_{i}P_{i}u\big) + \big(c^-u, R^{\top}_{i}P_{i}u\big)\big]  \\
& \mbox{\hspace{1cm}} = \sum_{i=0}^{N}b(R_{i}^{\top}T_{i}u, R^{\top}_{i}P_{i}u) -\sum_{i=0}^{N}\big[\big(\mathbf{b}\cdot\nabla u, R^{\top}_{i}P_{i}u\big) + \big(c^-u, R^{\top}_{i}P_{i}u\big)\big]  \\[3mm]
& \mbox{\hspace{1cm}} = \sum_{i=0}^{N}a(R_{i}^{\top}T_{i}u, R^{\top}_{i}P_{i}u) + \sum_{i=0}^{N}\big[\big(\mathbf{b}\cdot\nabla(R_{i}^{\top}T_{i}u-u), \,R^{\top}_{i}P_{i}u\big) + \big(c^-(R_{i}^{\top}T_{i}u-u), R^{\top}_{i}P_{i}u\big)\big].
\end{align*}
Now, since $$ a(R_i^\top T_i u, R_i^\top P_i u) = a_{\Omega_i} ( T_i u, P_i u) =  a_{\Omega_i} ( T_i u, u) =  a ( R_i^\top T_i u, u), $$
we have, by definition \eqref{eq:2-21} of $T$,
\begin{align}
\label{eq:4-17}
\begin{split}
a(u,u) &\leq \beta_0^{2}(k_0\Theta^{1/2}) \, a(Tu, u) \\
& \quad + \beta_0^{2}(k_0\Theta^{1/2})\sum_{i=0}^{N}\big[\big(\mathbf{b}\cdot\nabla(R_{i}^{\top}T_{i}u-u), \,R^{\top}_{i}P_{i}u\big) + \big(c^-(R_{i}^{\top}T_{i}u-u), R^{\top}_{i}P_{i}u\big)\big].
\end{split}
\end{align}
We proceed by bounding the sum in \eqref{eq:4-17} in terms of $a(u,u)$.

First, we consider the summand in \eqref{eq:4-17} corresponding to $i=0$. Integrating by parts as in \eqref{eq:2-5}, recalling that $R_0^\top$ is the identity operator, and using \eqref{eq:energy_est}, we obtain
\begin{align*}
\big(\mathbf{b}\cdot\nabla &(R_{0}^{\top}T_{0}u-u), R^{\top}_{0}P_{0}u\big) + \big(c^-(R_{0}^{\top}T_{0}u-u), R^{\top}_{0}P_{0}u\big) \\
&= -\big(\mathbf{b}\cdot\nabla (R^{\top}_{0}P_{0}u), R_{0}^{\top}T_{0}u-u\big) + \big(\tilde{c}(R_{0}^{\top}T_{0}u-u), R^{\top}_{0}P_{0}u\big) \\
&\leq \Vert \mathbf{b}\Vert_{\infty}\Vert u-T_{0}u \Vert \Vert P_{0}u \Vert_{a} + \Vert \tilde{c}\Vert_{\infty}\Vert u-T_{0}u \Vert \Vert P_{0}u \Vert \\
&\leq C_{2}(\mathbf{b}, c^-) \Vert u-T_{0}u\Vert \Vert P_{0}u \Vert_{a} \\
&\leq (1+ \Cstab^*) \, C_{2}(\mathbf{b}, c^-) \, C_1(\mathbf{b},c^-) \, k_0 \Theta^{1/2} \Vert u-T_{0}u\Vert_a \Vert P_{0}u \Vert_{a} \\
&\leq \sqrt{2} (1 + \Cstab^*) \, C_{2}(\mathbf{b}, c^-) \, C_1(\mathbf{b},c^-) \, k_0 \Theta^{1/2} \Vert u\Vert_a \Vert P_{0}u \Vert_{a},
\end{align*}
where the last three estimates are obtained using \eqref{eq:defC2}, \eqref{eq:4-1}, and \eqref{eq:4-11}. Using this, together with the definition of $s$ in assumption \eqref{eq:2-28} and recalling $k_0 \geq 1$, we obtain
\begin{align}
\beta^{2}_{0}(k_0\Theta^{1/2})&\big[\big(\mathbf{b}\cdot\nabla (R_{0}^{\top}T_{0}u-u), R^{\top}_{0}P_{0}u\big) + \big(c^-(R_{0}^{\top}T_{0}u-u), R^{\top}_{0}P_{0}u\big)\big] \nonumber \\
&\leq \sqrt{2} \, C_{2}(\mathbf{b}, c^-) \, C_1(\mathbf{b},c^-) \, \beta^{2}_{0}(k_0\Theta^{1/2}) \, (1+ \Cstab^*) \, k_0 \Theta^{1/2} \Vert u \Vert_{a}\Vert P_{0}u \Vert_{a} \nonumber \\
&= \frac{s}{2k_{0}^{1/2}}\Vert u \Vert_{a} \Vert P_{0}u \Vert_{a}
\leq \frac{s}{\sqrt{2}(k_{0}+1)^{1/2}}\Vert u \Vert_{a} \Vert P_{0}u \Vert_{a}. \label{eq:4-20}
\end{align}

Next, we consider the summands in \eqref{eq:4-17} corresponding to $i=1,\ldots,N$. Using \eqref{eq:energy_est} and the fact that $R_i^\top$ corresponds to the zero extension from $\Omega_i$, we obtain
\begin{align}
\big(\mathbf{b}\cdot\nabla &(R_{i}^{ \top}T_{i}u-u), R^{\top}_{i}P_{i}u\big) + \big(c^-(R_{i}^{\top}T_{i}u-u), R^{\top}_{i}P_{i}u\big) \nonumber \\
&\leq \left( \Vert \mathbf{b}\Vert_{L^{\infty}(\Omega_{i})}\Vert u-T_{i}u \Vert_{a,\Omega_{i}} + \Vert c^-\Vert_{L^{\infty}(\Omega_{i})} \Vert u - T_{i}u \Vert_{L^{2}(\Omega_{i})}\right) \Vert P_{i}u \Vert_{L^{2}(\Omega_{i})}.
\label{eq:4-21}
\end{align}
Furthermore, by \eqref{eq:4-9} and \eqref{eq:energy_est} again and noting that \eqref{eq:max} implies $H \Vert c^{-} \Vert_{L^\infty(\Omega_i)} \leq 1$, we have
\begin{align*}
\Vert u - T_i u \Vert_{a, \Omega_i} \leq 4 \Vert u \Vert_{a, \Omega_i} + 2\Vert u \Vert_{L^2(\Omega_i)} \quad \text{and} \quad \Vert u - T_i u \Vert_{L^2(\Omega_i)} \leq 2\Vert u\Vert_{L^2(\Omega_i)} + \sqrt{3} \Vert u \Vert_{a, \Omega_i}.
\end{align*}
Inserting these estimates into \eqref{eq:4-21}, we obtain
\begin{align}
\big(\mathbf{b}\cdot\nabla &(R_{i}^{\top}T_{i}u-u), R^{\top}_{i}P_{i}u\big) + \big(c^-(R_{i}^{\top}T_{i}u-u), R^{\top}_{i}P_{i}u\big) \nonumber \\
&\leq \left( \Vert \mathbf{b}\Vert_{L^{\infty}(\Omega_{i})} + \Vert c^-\Vert_{L^{\infty}(\Omega_{i})} \right) \left( 4 \Vert u \Vert_{a, \Omega_i} +  2 \Vert u \Vert_{L^{2}(\Omega_{i})}\right) \Vert P_i u \Vert_{L^2(\Omega_i)}.
\end{align}

Now, using Lemma~\ref{lem:2-4-0} again, together with the Cauchy--Schwarz inequality, we obtain
\begin{align}
\sum_{i=1}^{N}\big[\big(\mathbf{b}\cdot\nabla &(R_{i}^{\top}T_{i}u-u), R^{\top}_{i}P_{i}u\big) + \left(c^-(R_{i}^{\top}T_{i}u-u), R^{\top}_{i}P_{i}u\right)\big] \nonumber \\
&\leq 4H \, C_1(\mathbf{b}, c^-) \, \left(\sum_{i=1}^{N}\left(\Vert u \Vert_{L^{2}(\Omega_{i})} + \Vert u \Vert_{a,\Omega_{i}} \right)^{2}\right)^{1/2} \left(\sum_{i=1}^{N}\Vert P_{i}u \Vert^{2}_{a,\Omega_{i}} \right)^{1/2} \nonumber\\
&\leq 8H \, C_1(\mathbf{b}, c^-) \, \left(\sum_{i=1}^{N} \Vert u \Vert_{a,\Omega_{i}} ^{2}\right)^{1/2} \left(\sum_{i=1}^{N}\Vert P_{i}u \Vert^{2}_{a,\Omega_{i}} \right)^{1/2} \nonumber\\
&\leq 8H k_{0}^{1/2} \, C_1(\mathbf{b}, c^-) \, \Vert u\Vert_{a} \left(\sum_{i=1}^{N}\Vert P_{i}u \Vert^{2}_{a,\Omega_{i}} \right)^{1/2}.\label{eq:4-22}
\end{align}
Hence, using \eqref{eq:2-29}, we obtain
\begin{align}
\label{eq:4-24}
\begin{split}
\beta^{2}_{0}(k_0\Theta^{1/2})&\sum_{i=1}^{N}\big[\big(\mathbf{b}\cdot\nabla (R_{i}^{\top}T_{i}u-u), \,R^{\top}_{i}P_{i}u\big) + \big(c^-(R_{i}^{\top}T_{i}u-u), R^{\top}_{i}P_{i}u\big)\big] \\
&\leq \frac{t}{\sqrt{2}(k_{0}+1)^{1/2}} \Vert u\Vert_{a} \left(\sum_{i=1}^{N}\Vert P_{i}u \Vert^{2}_{a,\Omega_{i}} \right)^{1/2}.
\end{split}
\end{align}
It then follows from \eqref{eq:4-20}, \eqref{eq:4-24}, that
\begin{align}
\beta^{2}_{0}(k_0\Theta^{1/2})&\sum_{i=0}^{N}\big[\big(\mathbf{b}\cdot\nabla(R_{i}^{\top}T_{i}u-u), \,R^{\top}_{i}P_{i}u\big) + \big(c^-(R_{i}^{\top}T_{i}u-u), R^{\top}_{i}P_{i}u\big)\big] \nonumber \\
&\leq \frac{\max\{s,t\}}{\sqrt{2}(k_{0}+1)^{1/2}} \left(\Vert u \Vert_{a} \Vert P_{0}u \Vert_{a}+ \Vert u\Vert_{a} \left(\sum_{i=1}^{N}\Vert P_{i}u \Vert^{2}_{a,\Omega_{i}} \right)^{\frac{1}{2}}\right) \nonumber \\
&\leq \frac{\max\{s,t\}}{(k_{0}+1)^{1/2}} \Vert u\Vert_{a} \left(\sum_{i=0}^{N}\Vert P_{i}u \Vert^{2}_{a,\Omega_{i}}  \right)^{\frac{1}{2}} \leq \max\{s,t\} \Vert u \Vert_{a}^{2}.
\label{eq:4-25}
\end{align}
Now, inserting \eqref{eq:4-25} into \eqref{eq:4-17}, we see that
\begin{equation}\label{eq:4-26}
a(u,u) \leq \beta_{0}^{2}(k_0\Theta^{1/2})a(Tu, u) + \max\{s,t\} \Vert u \Vert_{a}^{2},
\end{equation}
which implies that
\begin{equation}\label{eq:4-27}
\beta_{0}^{-2}(k_0\Theta^{1/2})\left(1-\max\{s,t\} \right)a(u,u) \leq a(Tu, u),
\end{equation}
thus proving \eqref{eq:2-26}.

Now, to prove \eqref{eq:2-27}, we observe that
\begin{equation}\label{eq:4-28}
a(Tu, Tu) = \left\Vert \sum_{i=0}^{N}R_{i}^{\top}T_{i}u \right\Vert_{a}^{2}\leq 2\Vert T_{0}u \Vert_{a}^{2} + 2\left\Vert \sum_{i=1}^{N}R_{i}^{\top}T_{i}u\right\Vert_{a}^{2},
\end{equation}
Further, by \eqref{eq:4-11}, we have
\begin{equation}\label{eq:4-29}
\Vert T_{0}u \Vert_{a}^{2} \leq 6\Vert u\Vert_{a}^{2},
\end{equation}
and in addition, by Lemma \ref{lem:4-0}, we obtain
\begin{align}
\left \Vert \sum_{i=1}^{N}R_{i}^{\top}T_{i}u\right\Vert_{a}^{2} \leq k_{0} \sum_{i=1}^{N}\left\Vert T_{i}u\right\Vert_{a,\Omega_{i}}^{2} &\leq 2 k_{0}\sum_{i=1}^{N}\left(6\Vert u \Vert^{2}_{a,\Omega_{i}} + 2\Vert u \Vert^{2}_{L^{2}(\Omega_{i})}\right) \nonumber \\
&\leq 2 k_{0}^{2}\left(6\Vert u \Vert^{2}_{a} + 2\Vert u \Vert^{2}_{L^{2}(\Omega)}\right) \leq 16 k_{0}^{2}\Vert u \Vert^{2}_{a}.
\label{eq:4-30}
\end{align}
Combining \eqref{eq:4-28}--\eqref{eq:4-30}, we finally determine that
\begin{equation}\label{eq:4-31}
a(Tu, Tu) \leq \left(12 + 32 k_{0}^{2}\right) \Vert u\Vert_{a}^{2},
\end{equation}
thus completing the proof of Theorem~\ref{thm:2-1}.
\end{proof}

Using Theorem~\ref{thm:2-1} and the  error estimates for GMRES in \cite{elman}, we obtain the following key result as a corollary.
\begin{corollary}
Under the assumptions of Theorem~\ref{thm:2-1}, if GMRES in the $\langle \cdot,\cdot\rangle_\mathbf{A}$-inner product is applied to solve the preconditioned system \eqref{eq:2-23} then, after $m$ steps, the norm of the residual is bounded as
\begin{equation}\label{eq:2-31}
\Vert \mathbf{r}^{m} \Vert^{2}_{\mathbf{A}} \leq \left(1-\frac{c_{1}^{2}}{c_{2}^{2}}\right)^{m}\Vert \mathbf{r}^{0} \Vert^{2}_{\mathbf{A}},
\end{equation}
where $\mathbf{r}^{0}$ is the initial residual.
\end{corollary}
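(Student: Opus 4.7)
The plan is to derive the corollary by combining the two bounds from Theorem~\ref{thm:2-1} with the classical Elman GMRES convergence estimate (as cited in \cite{elman} or \cite[Lemma C.11]{toselli}). Recall that the Elman estimate states that if $\mathcal{M}$ is a linear operator on a finite-dimensional inner product space $(X, \langle\cdot,\cdot\rangle)$ satisfying
\begin{equation*}
c_1\,\langle x,x\rangle \leq \langle \mathcal{M}x, x\rangle \quad \text{and} \quad \langle \mathcal{M}x, \mathcal{M}x\rangle \leq c_2^2\,\langle x,x\rangle \quad \text{for all } x \in X,
\end{equation*}
then GMRES applied in the inner product $\langle\cdot,\cdot\rangle$ to the system $\mathcal{M}x = b$ satisfies $\Vert r^m\Vert^2 \leq (1 - c_1^2/c_2^2)^m \Vert r^0\Vert^2$. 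So the task reduces to transferring the operator-level bounds \eqref{eq:2-26} and \eqref{eq:2-27} into matrix-level bounds in the $\langle\cdot,\cdot\rangle_{\mathbf{A}}$-inner product.

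First I would invoke Proposition concerning \eqref{eq:equivalence}, which gives exactly the required identification: for all $u,v \in V^h$ with nodal vectors $\mathbf{u},\mathbf{v}$,
\begin{equation*}
\langle \mathbf{M}_{AS,2}^{-1}\mathbf{B}\mathbf{u}, \mathbf{v}\rangle_{\mathbf{A}} = a(Tu,v).
\end{equation*}
Since $a(\cdot,\cdot)$ and $\langle\cdot,\cdot\rangle_{\mathbf{A}}$ are the same inner product on $V^h \cong \mathbb{R}^n$ (by definition of $\mathbf{A}$), the inequality \eqref{eq:2-26} immediately translates into the coercivity bound $\langle \mathbf{M}_{AS,2}^{-1}\mathbf{B}\mathbf{u}, \mathbf{u}\rangle_{\mathbf{A}} \geq c_1(H,\Theta)\,\langle \mathbf{u},\mathbf{u}\rangle_{\mathbf{A}}$.

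Next, I would derive the companion upper bound on the $\mathbf{A}$-norm of the preconditioned operator. Using \eqref{eq:equivalence} twice together with the Cauchy--Schwarz inequality in the $a$-inner product,
\begin{equation*}
\Vert \mathbf{M}_{AS,2}^{-1}\mathbf{B}\mathbf{u}\Vert_{\mathbf{A}} = \sup_{\mathbf{v} \neq 0}\frac{\langle \mathbf{M}_{AS,2}^{-1}\mathbf{B}\mathbf{u}, \mathbf{v}\rangle_{\mathbf{A}}}{\Vert\mathbf{v}\Vert_{\mathbf{A}}} = \sup_{v \neq 0} \frac{a(Tu,v)}{\Vert v\Vert_a} = \Vert Tu\Vert_a,
\end{equation*}
so that \eqref{eq:2-27} yields $\Vert \mathbf{M}_{AS,2}^{-1}\mathbf{B}\mathbf{u}\Vert_{\mathbf{A}}^2 \leq c_2(H,\Theta)\,\Vert\mathbf{u}\Vert_{\mathbf{A}}^2$. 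Feeding these two bounds into the Elman estimate with $\mathcal{M} = \mathbf{M}_{AS,2}^{-1}\mathbf{B}$ yields \eqref{eq:2-31}.

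There is essentially no obstacle here --- the work has been done in Theorem~\ref{thm:2-1}; the only mild subtlety is keeping track of the fact that $c_2$ in \eqref{eq:2-30} has already been defined as the square of what appears in Elman's $c_2$ (i.e.\ as a bound on $a(Tu,Tu)$ rather than on $\Vert Tu\Vert_a$), which is why $c_2^2$ rather than $c_2$ appears in the denominator of \eqref{eq:2-31}. The matching of constants and inner products is the only point that requires a moment of care.
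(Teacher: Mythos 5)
Your proposal is correct and follows essentially the same route as the paper: transfer \eqref{eq:2-26} and \eqref{eq:2-27} to the matrix level via \eqref{eq:equivalence} (your explicit duality step identifying $\Vert \mathbf{M}_{AS,2}^{-1}\mathbf{B}\mathbf{u}\Vert_{\mathbf{A}} = \Vert Tu\Vert_a$ is a detail the paper leaves implicit) and then invoke the Elman estimate. One caveat: your closing remark about the constants is backwards --- since \eqref{eq:2-27} bounds $a(Tu,Tu)$, the paper's $c_2$ is already the square of the norm bound entering Elman's estimate, so your argument actually delivers the sharper factor $1-c_1^2/c_2$; the stated bound \eqref{eq:2-31} with $c_2^2$ in the denominator then holds as a weaker consequence because $c_2 = 12+32k_0^2 \geq 1$, not because of the constant matching you describe.
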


\begin{proof}
The estimates \eqref{eq:2-26} and \eqref{eq:2-27}, together with \eqref{eq:equivalence} imply
\begin{align*}
\left\vert \langle\mathbf{M}_{AS,2}^{-1} \mathbf{B} \mathbf{u}, \mathbf{u} \rangle_{\mathbf{A}}\right\vert \geq c_1(H_0,\Lambda_0) \, \Vert \mathbf{u} \Vert _{\mathbf{A}}^2 \quad \text{and} \quad \Vert \mathbf{M}_{AS,2}^{-1} \mathbf{B} \Vert_{\mathbf{A}}^2 \leq c_2.
\end{align*}
The result follows directly from the Elman theory \cite{elman}.
\end{proof}
Restricting to the special case $\mathbf{b} = \mathbf{0}$ and $c = 0$, we obtain essentially the same result as in \cite{spillane:2014}, which is interesting because the latter makes use of `self-adjoint' technology, working with eigenvalue and condition number estimates and does not estimate the field of values as in \cite{elman}.

The fact that domain decomposition theory usually provides theoretical estimates in the energy inner product, while GMRES is normally applied with respect to the Euclidean inner product makes the estimate in the previous corollary slightly impractical. However this can be converted to a statement about standard GMRES using the following  norm equivalence argument (see also  \cite[Corollary 5.8]{ivan}).
\begin{lemma} \label{lem:norm_GMRES}
Suppose we are solving the linear system
\begin{align} \label{eq:Csystem}
\mathbf{C} \mathbf{x} = \mathbf{c}
\end{align}
on $\mathbb{R}^n$ and $\langle \cdot , \cdot \rangle_1$ and
$\langle \cdot , \cdot \rangle_2$  are two inner products on $\mathbb{R}^n$ with
associated norms $\Vert \cdot \Vert_1$ and $\Vert \cdot
\Vert_2$. The corresponding equivalence constants are denoted by $\underline{\gamma}, \overline{\gamma}$, so that
\begin{align} \label{eq:normeq}
\underline{\gamma} \Vert \mathbf{x} \Vert_1 \leq \Vert \mathbf{x}
\Vert_2\leq \overline{\gamma} \Vert \mathbf{x} \Vert_1, \quad
\text{for all} \quad \mathbf{x} \in \mathbb{R}^n.
\end{align}
Given a fixed initial guess $\mathbf{x}^0 \in \mathbb{R}^n$ with
residual $\mathbf{r}^0$, consider the  sequences of
iterates $\mathbf{x}_1^m$ and $\mathbf{x}_2^m$, $m = 0,1,2,\ldots$,
with residuals $\mathbf{r}_1^m$ and $\mathbf{r}_2^m$ that
minimise the residual of \eqref{eq:Csystem} over the Krylov subspace spanned by
$C$ in the norms $\Vert \cdot \Vert_1$ and
$\Vert \cdot \Vert_2$ respectively.
Suppose also that the sequence $\mathbf{r}_1^m$ enjoys the relative residual estimate
\begin{align*}
\Vert \mathbf{r}_1^m\Vert_1 \leq \sigma^m  \Vert \mathbf{r}^0\Vert_1, \quad \text{for some} \quad \sigma < 1.
\end{align*}
Then we have
\begin{equation}\label{eq:rmp}
\Vert \mathbf{r}_2^{m + \Delta m}\Vert_2 \leq \sigma^m \Vert
\mathbf{r}^0\Vert_2, \quad \text{for all} \quad \Delta m \geq
\frac{\log(\overline{\gamma}/\underline{\gamma})}{\log(\sigma^{-1})}.
\end{equation}
\end{lemma}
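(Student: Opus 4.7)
The plan is to exploit the fact that both sequences of GMRES iterates draw from the same Krylov subspace $K_m(\mathbf{C}, \mathbf{r}^0)$ at step $m$, so that the norm-2 minimizer automatically does at least as well as the norm-1 minimizer when measured in $\Vert \cdot \Vert_2$. First I would record the minimum residual characterization: for any $m \geq 0$ and any $\mathbf{y} \in K_m(\mathbf{C}, \mathbf{r}^0)$, the residual $\mathbf{r} := \mathbf{r}^0 - \mathbf{C}\mathbf{y}$ satisfies $\Vert \mathbf{r}_2^m \Vert_2 \leq \Vert \mathbf{r} \Vert_2$. Applying this with $\mathbf{y}$ being the norm-1 GMRES correction at step $m$ gives the key comparison $\Vert \mathbf{r}_2^m \Vert_2 \leq \Vert \mathbf{r}_1^m \Vert_2$.

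Next I would use the norm equivalence \eqref{eq:normeq} on both sides of the hypothesized residual bound: from $\Vert \mathbf{r}_1^m \Vert_2 \leq \overline{\gamma}\,\Vert \mathbf{r}_1^m \Vert_1$ and $\Vert \mathbf{r}^0 \Vert_1 \leq \underline{\gamma}^{-1}\,\Vert \mathbf{r}^0 \Vert_2$, combined with $\Vert \mathbf{r}_1^m \Vert_1 \leq \sigma^m \Vert \mathbf{r}^0 \Vert_1$, the comparison in the previous step promotes to
\begin{equation*}
\Vert \mathbf{r}_2^{m+\Delta m} \Vert_2 \ \leq\ \Vert \mathbf{r}_1^{m+\Delta m} \Vert_2 \ \leq\ \overline{\gamma}\,\sigma^{m+\Delta m}\,\Vert \mathbf{r}^0 \Vert_1 \ \leq\ \frac{\overline{\gamma}}{\underline{\gamma}}\,\sigma^{m+\Delta m}\,\Vert \mathbf{r}^0 \Vert_2,
\end{equation*}
valid for any $\Delta m \geq 0$. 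Finally, to obtain \eqref{eq:rmp} one simply requires $(\overline{\gamma}/\underline{\gamma})\sigma^{\Delta m} \leq 1$, which, after taking logarithms and using $\log\sigma < 0$, is equivalent to the stated lower bound on $\Delta m$.

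The only subtle point is the first step: one must verify that the residual produced by the norm-1 GMRES iteration at step $m+\Delta m$ is an admissible competitor in the minimization problem that defines the norm-2 iterate at the same step, which is immediate because both iterates are constrained to the same Krylov subspace $K_{m+\Delta m}(\mathbf{C}, \mathbf{r}^0)$. Everything else is routine manipulation of the norm-equivalence constants, so no genuine obstacle is expected.
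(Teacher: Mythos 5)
Your proposal is correct and follows essentially the same route as the paper: both use that the norm-1 GMRES iterate at step $m+\Delta m$ is an admissible competitor for the norm-2 minimization over the same Krylov subspace, giving $\Vert \mathbf{r}_2^{m+\Delta m}\Vert_2 \leq \Vert \mathbf{r}_1^{m+\Delta m}\Vert_2$, and then apply the norm equivalence on each side plus the condition $(\overline{\gamma}/\underline{\gamma})\sigma^{\Delta m}\leq 1$. No gaps.
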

\begin{proof}
Since $\mathbf{x}_2$ minimises the residual in the norm $\Vert
\cdot \Vert_2$ and $\mathbf{r}_1^0 = \mathbf{r}_2^0 = \mathbf{c} - C
\mathbf{x}^0$, it follows from \eqref{eq:normeq} that
\begin{align*}
\frac{\Vert \mathbf{r}^{m + \Delta m}_2\Vert_2}{\Vert \mathbf{r}_2^{0}\Vert_2} \le \frac{\Vert \mathbf{r}_1^{m + \Delta m}\Vert_2}{\Vert \mathbf{r}_1^{0}\Vert_2}\le \frac{\overline\gamma}{\underline\gamma} \frac{\Vert \mathbf{r}^{m + \Delta m}_1\Vert_1}{\Vert \mathbf{r}^{0}_1\Vert_1} \le \frac{\overline\gamma}{\underline\gamma} \sigma^{m + \Delta m}.
\end{align*}
Thus, we can deduce the bound in \eqref{eq:rmp} if $\Delta m \log(\sigma^{-1}) \ge \log(\overline{\gamma}/\underline\gamma)$.
\end{proof}
\begin{corollary} \label{cor:Euclidean}
Assuming the mesh sequence $\mathcal{T}_h$ is quasiuniform, then
with an additional number $\Delta m$ of iterations,
which grows at most proportionally to $\log(a_{\max})+ \log(h^{-1})$, GMRES applied in the Euclidean inner product
(i.e., standard GMRES) for the preconditioned system \eqref{eq:2-23} satisfies the bound
\begin{equation*}
\Vert \mathbf{r}^{m+\Delta m} \Vert^{2} \leq \left(1-\frac{c_{1}^{2}}{c_{2}^{2}}\right)^{m}\Vert \mathbf{r}^{0} \Vert^{2}.
\end{equation*}
\end{corollary}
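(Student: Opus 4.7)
The plan is to apply Lemma \ref{lem:norm_GMRES} with the two inner products being the $\mathbf{A}$-inner product $\langle\cdot,\cdot\rangle_{\mathbf{A}}$ and the standard Euclidean inner product on $\mathbb{R}^n$, identifying $\mathbf{C} = \mathbf{M}_{AS,2}^{-1}\mathbf{B}$. The previous corollary gives, in the $\mathbf{A}$-norm, the geometric rate $\sigma = \sqrt{1 - c_1^2/c_2^2} < 1$. What remains is to quantify the norm-equivalence ratio $\overline{\gamma}/\underline{\gamma}$ between $\|\cdot\|$ and $\|\cdot\|_{\mathbf{A}}$, since Lemma \ref{lem:norm_GMRES} then immediately yields the required $\Delta m$ as $\log(\overline{\gamma}/\underline{\gamma})/\log(\sigma^{-1})$.

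For a vector $\mathbf{x}\in\mathbb{R}^n$ the two norms are related by the extremal eigenvalues of $\mathbf{A}$, namely $\sqrt{\lambda_{\min}(\mathbf{A})}\,\|\mathbf{x}\| \leq \|\mathbf{x}\|_{\mathbf{A}} \leq \sqrt{\lambda_{\max}(\mathbf{A})}\,\|\mathbf{x}\|$, so one may take $\underline{\gamma} = \sqrt{\lambda_{\min}(\mathbf{A})}$ and $\overline{\gamma} = \sqrt{\lambda_{\max}(\mathbf{A})}$. I next want to bound these eigenvalues using quasiuniformity. Writing $u_h = \sum x_i \phi_i$, standard properties of a quasiuniform conforming nodal FE basis give $c\,h^{d}\|\mathbf{x}\|^2 \leq \|u_h\|_{L^2(\Omega)}^2 \leq C\,h^{d}\|\mathbf{x}\|^2$. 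Combining the lower mass bound with the Poincaré--Friedrichs inequality \eqref{eq:Fried} (on $\Omega$, using $a_{\min}=1$ and $D_\Omega\leq 1$) yields $a(u_h,u_h) \geq \|\nabla u_h\|^2 \geq c' \|u_h\|_{L^2}^2 \geq c'' h^{d}\|\mathbf{x}\|^2$, hence $\lambda_{\min}(\mathbf{A}) \gtrsim h^{d}$. Conversely, the inverse inequality $\|\nabla u_h\|_{L^2}^2 \leq C h^{-2}\|u_h\|_{L^2}^2$ together with the upper mass bound and $\|c^+\|_\infty \lesssim a_{\max}$ (absorbed into the bound, or handled separately) gives $a(u_h,u_h) \leq (a_{\max} + \|c^+\|_\infty) h^{-2}\|u_h\|_{L^2}^2 \lesssim a_{\max} h^{d-2}\|\mathbf{x}\|^2$, so $\lambda_{\max}(\mathbf{A}) \lesssim a_{\max} h^{d-2}$.

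Therefore $\overline{\gamma}/\underline{\gamma} \lesssim \sqrt{a_{\max}}\,h^{-1}$, and taking logarithms gives $\log(\overline{\gamma}/\underline{\gamma}) \lesssim \log(a_{\max}) + \log(h^{-1})$ (up to an additive constant). Since $\sigma$, and hence $\log(\sigma^{-1})$, is a fixed positive quantity determined only by $c_1,c_2$ in Theorem \ref{thm:2-1}, the definition of $\Delta m$ in \eqref{eq:rmp} grows at most proportionally to $\log(a_{\max}) + \log(h^{-1})$, and the claimed residual bound on standard GMRES follows directly from Lemma \ref{lem:norm_GMRES}.

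The only delicate step is the spectral estimate for $\mathbf{A}$: one must invoke quasiuniformity to control both the mass matrix and the inverse inequality cleanly, and take some care that the $c^+$ contribution to $a(\cdot,\cdot)$ does not spoil the $a_{\max}$-scaling (which is fine since $\|c^+\|_\infty$ is part of the fixed PDE data and, if desired, can be absorbed by enlarging $a_{\max}$ or treated as an additional logarithmic factor). Everything else is a direct application of the already-established Corollary and Lemma \ref{lem:norm_GMRES}.
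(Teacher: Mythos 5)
Your proposal is correct and follows essentially the same route as the paper: apply Lemma \ref{lem:norm_GMRES} with $\Vert\cdot\Vert_{\mathbf{A}}$ and the Euclidean norm, and obtain the equivalence ratio $\overline{\gamma}/\underline{\gamma}\lesssim\sqrt{a_{\max}}\,h^{-1}$ from quasiuniform mass-matrix bounds, the Friedrichs inequality and an inverse estimate (the paper states exactly these constants, $\underline{C}h^{d/2}$ and $\overline{C}\sqrt{a_{\max}}h^{d/2-1}$, without spelling out the derivation). Your explicit spectral bounds on $\mathbf{A}$ and the remark about absorbing the $c^{+}$ term simply fill in details the paper leaves implicit; only the ratio of the equivalence constants matters, so the labelling of $\underline{\gamma},\overline{\gamma}$ is immaterial.
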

\begin{proof}
This uses the previous lemma with $\Vert \cdot \Vert_1 = \Vert \cdot \Vert_A$
and $\Vert \cdot \Vert_{2} = \Vert \cdot \Vert$ (the Euclidean norm), together with inverse and norm equivalence estimates  which imply \eqref{eq:normeq} with
$\underline{\gamma} = \underline{C} h^{d/2} $ and
$\overline{\gamma} = \overline{C} \sqrt{a_{\max}} h^{d/2-1} $,
where $\underline{C}$ and $\overline{C}$ are constants that are independent of all parameters.
\end{proof}

\section{Numerical experiments}
\label{sec:numerical}

In this section we provide numerical results primarily for the two model problems: Firstly, an indefinite elliptic problem (where $\kappa > 0$)
\begin{subequations}
\label{eq:helmholtz}
\begin{align}
-\mathrm{div}( A \nabla u) - \kappa u &= f & & \text{in } \Omega, \\
u &= 0 & & \text{on } \partial\Omega,
\end{align}
where the splitting in \eqref{eq:splitc} used is given by
\begin{align}
\label{eq:helmholtz-splitting}
c^+ =0 \quad \text{and} \quad  c^- = - \kappa.
\end{align}
\end{subequations}
Secondly, a convection--diffusion problem
\begin{subequations}
\label{eq:convdiff}
\begin{align}
-\mathrm{div}( A \nabla u) + \mathbf{b}\cdot\nabla u &= f & & \text{in } \Omega, \\
u &= 0 & & \text{on } \partial\Omega,
\end{align}
where the splitting in \eqref{eq:splitc} used is given by
\begin{align}
\label{eq:convdiff-splitting}
c^+ = 0 = c^-.
\end{align}
\end{subequations}
Unless stated otherwise, in all our computations $\Omega = (0,1)^2$.

We first detail the common numerical setup. Our computations are performed using FreeFem \cite{hecht:2012} (\url{http://freefem.org/}), in particular within the \texttt{ffddm} framework. We perform experiments in 2D and discretise on uniform square grids with mesh spacing $h$, triangulating along one set of diagonals to form P1 elements. As the right-hand side $f$ we impose a point source at the centre of the domain $\Omega$.

To precondition the large sparse linear systems that arise from
discretisation, unless otherwise stated, we utilise a two-level additive Schwarz preconditioner
where the GenEO coarse space is incorporated additively, as in
\eqref{eq:2-24}. This is used within a right-preconditioned GMRES
iteration, which is terminated once a relative reduction of the
residual of at least $10^{-6}$ has been achieved, or otherwise after a
maximum of 1000 iterations (denoted in the tables below by `$1000+$').
For the local subdomains
we use a uniform decomposition into $N$ square subdomains which,
unless stated otherwise, are then extended by adding only the
fine-mesh elements which touch them (yielding a `minimal overlap'
scenario). Within the GenEO coarse space we take all eigenvectors
corresponding to eigenvalues satisfying the eigenvalue threshold
$\lambda < \lambda_\text{max}$. Unless otherwise stated, we use the
threshold parameter $\lambda_\text{max} = 0.5$ which, in light of
Definition \ref{def:2-2}, ensures that $\Theta \le 2$ in our
numerical experiments. Note that this means we do not make $\Theta$
particularly small in practice, although it is necessary in
theory. To solve the local generalised eigenvalue problems we use
ARPACK \cite{arpack:1998}, as interfaced through FreeFem.

\subsection{The case of homogeneous diffusion coefficient}
\label{subsec:numerics-homogeneous}

To illustrate some of the basic properties of the GenEO method, we first consider some simple homogeneous
test cases (i.e., with $A = I$), exploring how  indefiniteness/non-self-adjointness  affects performance.
In Table~\ref{Table:Indefinite_homogeneous_nglob600} we display results for problem \eqref{eq:helmholtz},
varying the strength of the indefiniteness through increasing $\kappa$
\nb{while fixing $h$ (left) or varying
  $h \propto 1/\sqrt{\kappa}$ (right).
  As $\kappa$ grows,  the solution  will be rich in the Laplace  Dirichlet eigenmodes corresponding to eigenvalues near  ${\kappa}$, and so this scaling 
  is analogous to choosing    a fixed number of points
per wavelength in a Helmholtz problem.  (In fact, here, approximately 100 points per wavelength).} We see that the
GenEO coarse space, built only from the positive
(Laplacian) part of the problem (recall \eqref{eq:helmholtz-splitting}), works surprisingly well and
provides a scalable solver for $\kappa \leq 1000$.
However, once $\kappa$ is too large the performance degrades and for
$\kappa = 10000$ the method lacks robustness and scalability, the
underlying problem now being highly indefinite.
\nb{In comparing the left and right parts of Table 1, we see that choosing   $h \propto (\sqrt{\kappa})^{-1}$ only mildly affects iteration counts  for the range of $\kappa$ in  which the method works well. 
}

\begin{table}[t]
	\centering
	\small
	\tabulinesep=1.2mm
	\begin{tabu}{cc|ccccc}
		& & \multicolumn{5}{c}{$N$} \\
		$\kappa$ & $h$ & 4  & 16  & 36  & 64  & 100 \\
		\hline
		1 & $1/600$ & 16  & 17 & 17 & 18 & 18 \\
		10 & $1/600$ & 17  & 18 & 18 & 18 & 18 \\
		100 & $1/600$ & 24  & 27 & 26 & 23 & 23 \\
		1000 & $1/600$ & 40  & 98 & 102 & 113 & 89 \\
		10000 & $1/600$ & 144  & 431 & 660 & $\!\!1000+\!\!$ & $\!\!1000+\!\!$
	\end{tabu}
	\hfill
	\begin{tabu}{cc|ccccc}
	& & \multicolumn{5}{c}{$N$} \\
	$\kappa$ & $h$ & 4  & 16  & 36  & 64  & 100 \\
	\hline
	1 & $1/16$ & 14 & 15 & $-$ & $-$ & $-$ \\
	10 & $1/51$ & 16 & 18 & 19 & 22 & 24 \\
	100 & $1/160$ & 25 & 25 & 23 & 25 & 24 \\
	1000 & $1/506$ & 59 & 118 & 147 & 130 & 106 \\
	10000 & $1/1600$ & 161 & 446 & 821 & $\!\!1000+\!\!$ & $\!\!1000+\!\!$
	\end{tabu}
	\caption{Iteration counts for the homogeneous, elliptic, indefinite problem \eqref{eq:helmholtz}, varying the parameter $\kappa$ and the number of subdomains $N$ for fixed $h = 1/600$ (left) and $h \propto 1/\sqrt{\kappa}$ (right).}
	\label{Table:Indefinite_homogeneous_nglob600}
\end{table}

Turning now to non-self-adjointness, Table~\ref{Table:Convection_zero_div_homogeneous_nglob600} (left) gives results for \eqref{eq:convdiff} with $A = I$ and
\begin{align}
\label{conv:zero-div}
\mathbf{b} = b \, (1+\sin(2\pi(2y-x))) (2,1)^{T},
\end{align}
\nb{for a fixed $h$.}
The non-self-adjointness can be increased by varying $b$.
In this case, $\nabla \cdot \mathbf{b} = 0$ and so in
\eqref{eq:constants} $C_2(\mathbf{b},
c^-) = C_1(\mathbf{b}, c^-)$, thus improving
the estimate in Theorem \ref{thm:2-1}. We observe that, also in this case,
the GenEO coarse space works well and provides
a scalable method for $b$ up to at least $1000$. For $b = 10000$
the convection term appears too strong and the iteration counts
deteriorate.

\begin{table}[t]
	\centering
	\tabulinesep=1.2mm
	\begin{tabu}{c|ccccc}
		& \multicolumn{5}{c}{$N$} \\
		$b$ & 4 & 16 & 36 & 64 & 100 \\
		\hline
		1 & 18 & 18 & 18 & 18 & 18 \\
		10 & 28 & 26 & 23 & 22 & 21 \\
		100 & 35 & 34 & 30 & 28 & 27 \\
		1000 & 57 & 59 & 63 & 62 & 59 \\
		10000 & 172 & 276 & 355 & 472 & 512
	\end{tabu} \qquad\qquad
	\begin{tabu}{c|ccccc}
		& \multicolumn{5}{c}{$N$} \\
		$b$ & 4 & 16 & 36 & 64 & 100 \\
		\hline
		1 & 18 & 18 & 18 & 18 & 18 \\
		10 & 28 & 26 & 23 & 22 & 21 \\
		100 & 39 & 43 & 35 & 29 & 25 \\
		1000 & 72 & 107 & 71 & 74 & 63 \\
		10000 & 201 & 450 & 708 & 835 & 837
	\end{tabu}
\caption{Iteration counts for the convection--diffusion
          problem \eqref{eq:convdiff}, with homogeneous diffusion $A = I$ and
          convection coefficient \eqref{conv:zero-div} with zero
          divergence (left) and \eqref{conv:non-zero-div} with
          nonzero divergence (right), varying the constant $b$ and
        the number of subdomains $N$ for fixed $h = 1/600$.}
	\label{Table:Convection_zero_div_homogeneous_nglob600}
\end{table}

We also consider problem \eqref{eq:convdiff} with a convection
field with non-vanishing divergence, namely
\begin{align}
\label{conv:non-zero-div}
\mathbf{b} = b \, (1+\sin(2\pi(2x+y))) (2,1)^{T}.
\end{align}
These results are given in Table~\ref{Table:Convection_zero_div_homogeneous_nglob600} (right) and
are similar to those in the left table,
displaying robustness except in the most convection-dominated case.
However, for higher $b$ the iteration counts become larger compared to the zero-divergence case,
illustrating that the strength of  $\nabla \cdot \mathbf{b}$ does play a role, as
predicted by the theory through the constant $C_2$ (given in
\eqref{eq:constants}). We consider the impact of the convection field
further in Section~\ref{subsec:numerics-convfield}.

\nb{It is well-known that  specialist discretisations of `upwind' type, are required in order to obtain accurate solutions for practical meshes  when the convection becomes large. 
  However such approaches are typically not of the standard Galerkin form \eqref{eq:2-7}, and so an
  extension of the theory given here would be needed to treat these.  Nevertheless
  since the  standard Galerkin method treated here is effective if $h$ is small enough relative to $b$, some sample results with   $h \propto b^{-1}$ are given in Table \ref{Table:Convection_zero_div_homogeneous_nglob_varying}.  Here,  for  illustration,  we compute only up to   $b=100$.
  Comparing the results from this refinement strategy to the fixed mesh results in Table \ref{Table:Convection_zero_div_homogeneous_nglob600}, we see that the iteration counts have grown   only mildly, and the GenEO approach remains very useful for moderate $b$.}

\nb{We explore the dependence on mesh refinement further in Section~\ref{subsec:numerics-meshrefinement}; otherwise, in light of these results, we will typically consider $h$ as fixed.}

\begin{table}[t]
	\centering
	\tabulinesep=1.2mm
	\begin{tabu}{cc|ccccc}
		& & \multicolumn{5}{c}{$N$} \\
		$b$ & $h$ & 4 & 16 & 36 & 64 & 100 \\
		\hline
		1 & $1/16$ & 14 & 15 & $-$ & $-$ & $-$ \\
		10 & $1/160$ & 23 & 21 & 21 & 20 & 20 \\
		100 & $1/1600$ & 58 & 54 & 48 & 46 & 42
	\end{tabu} \qquad\qquad
	\begin{tabu}{cc|ccccc}
		& & \multicolumn{5}{c}{$N$} \\
		$b$ & $h$ & 4 & 16 & 36 & 64 & 100 \\
		\hline
		1 & $1/16$ & 14 & 15 & $-$ & $-$ & $-$ \\
		10 & $1/160$ & 23 & 22 & 20 & 20 & 19 \\
		100 & $1/1600$ & 59 & 69 & 56 & 46 & 40
	\end{tabu}
	\caption{Iteration counts for the convection--diffusion
		problem \eqref{eq:convdiff}, with homogeneous diffusion $A = I$ and
		convection coefficient \eqref{conv:zero-div} with zero
		divergence (left) and \eqref{conv:non-zero-div} with
		nonzero divergence (right), varying the constant $b$ and
		the number of subdomains $N$ for $h \propto 1/b$.}
	\label{Table:Convection_zero_div_homogeneous_nglob_varying}
\end{table}


\subsection{The case of a heterogeneous diffusion coefficient}
\label{subsec:numerics-heterogeneous}

\begin{figure}[t]
	\centering
	\begin{tikzpicture}[scale=0.6]
		\foreach \x in {0,2,4,6,8}
		\foreach \y in {0,2,4,6,8}
		\fill[gray,opacity=((\y+1)*0.1)] (\x,\y) -- (\x+1,\y) -- (\x+1,\y+1) -- (\x,\y+1) -- cycle;
		\fill[gray,opacity=1] (9*0.16,9*0.63) -- (9*0.16,9*0.68) -- (9*0.72,9*0.47) -- (9*0.72,9*0.42) -- cycle;
		\fill[white,opacity=1] (9*0.22,9*0.03) -- (9*0.22,9*0.11) -- (9*0.94,9*0.23) -- (9*0.94,9*0.15) -- cycle;
		\fill[gray,opacity=0.5] (9*0.22,9*0.03) -- (9*0.22,9*0.11) -- (9*0.94,9*0.23) -- (9*0.94,9*0.15) -- cycle;
		\draw (0,0) -- (9,0) -- (9,9) -- (0,9) -- cycle;
	\end{tikzpicture}
	\caption{The heterogeneous function $a(\mathbf{x})$ within the inclusions and channels test problem. The shading gives the value of $a(\mathbf{x})$ with the darkest shade being $a(\mathbf{x}) = a_\text{max}$, where $a_\text{max}>1$ is a parameter, and the background white taking the value $a(\mathbf{x}) = 1$.}
	\label{Fig:InclusionsAndChannels}
\end{figure}
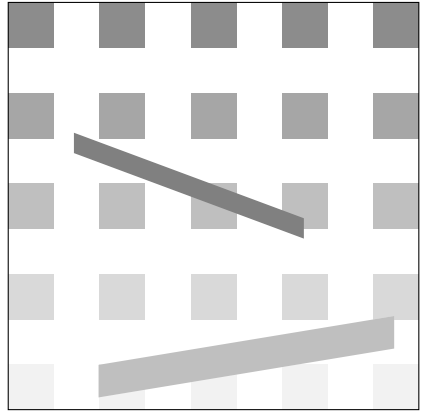
We now consider the same experiments as in the previous subsection but with the diffusion coefficient $A$ being heterogeneous. In particular, we will consider an ``inclusions and channels'' problem, akin to that in \cite{nataf:2011}. Here, the heterogeneous coefficient is given by
\begin{align}
A(\mathbf{x}) = a(\mathbf{x}) I, \quad \text{where }
a(\mathbf{x}) \text{ is given in Figure~\ref{Fig:InclusionsAndChannels}}. \label{eq:hetcoeff}
\end{align}
The function $a(\mathbf{x})$ represents a number of inclusions, whose contrast
with the background value of $a_\text{min} = 1$ varies depending
on the $y$-coordinate, along with two channels which run across the
inclusions and link through a larger part of the domain, one with
$a(\mathbf{x}) = a_\text{max}$ and one with $a(\mathbf{x}) =
\frac{1}{2} a_\text{max}$. This represents a challenging heterogeneous
problem for iterative solvers even in the self-adjoint and
positive-definite case (i.e., $\mathbf{b} = 0$ or $\kappa = 0$).
The strength of the heterogeneity is determined by the contrast parameter $a_\text{max}$.

Results for problem \eqref{eq:helmholtz} are given in Table~\ref{Table:Heterogeneous_nglob600} (left)
while results for problem \eqref{eq:convdiff} with zero-divergence
$\mathbf{b}$ are in Table~\ref{Table:Heterogeneous_nglob600}
(right). Results for the case when $\nabla \cdot \mathbf{b} \not=0$ are
similar to those in the zero-divergence case,  and thus
omitted. We see that in all cases where the GenEO coarse space led
to a robust method for the homogeneous problem, it does also for the
heterogeneous problem and, moreover, is robust to the strength of
the heterogeneity (here determined by $a_\text{max}$). In the highly
indefinite or highly non-self-adjoint cases, where the method
struggles already for homogeneous problems, the lower iteration
counts for some of the heterogeneous problems may stem from the fact
that in some parts of the domain the heterogeneity in fact reduces
the strength of the indefiniteness or of the convection term
locally.  For instance, consider the part $\Omega'$ of the domain
where $a$ is constant and has the largest value $a_\text{max}$
(indicated by the darkest shade in Figure
\ref{Fig:InclusionsAndChannels}). Then, restricting
\eqref{eq:helmholtz} to $\Omega'$, we see  that effectively \eqref{eq:helmholtz} reduces to
\begin{align*}
-\Delta u - \left(a_\text{max}^{-1}\kappa\right) u = a_\text{max}^{-1} f,
\end{align*}
hence reducing the effective indefiniteness in this region when $a_\text{max} > 1$ is large.
\begin{table}[t]
	\centering
	\tabulinesep=1.2mm
	\begin{tabu}{cc|ccccc}
		\multicolumn{2}{c|}{} & \multicolumn{5}{c}{$N$} \\
		$\kappa$ & $a_\text{max}$ & 4 & 16 & 36 & 64 & 100 \\
		\hline
		1 & 5  & 18 & 18 & 18 & 18 & 18   \\
		1 & 10 & 18 & 18 & 18 & 18 & 19   \\
		1 & 50 & 18 & 18 & 19 & 19 & 19   \\
		\hline
		10 & 5  & 18 & 18 & 18 & 19 & 19   \\
		10 & 10 & 18 & 18 & 19 & 19 & 19   \\
		10 & 50 & 18 & 18 & 20 & 19 & 19   \\
		\hline
		100 & 5  & 27 & 30 & 26 & 23 & 22   \\
		100 & 10 & 26 & 26 & 24 & 23 & 22   \\
		100 & 50 & 23 & 22 & 22 & 20 & 20   \\
		\hline
		1000 & 5  & 70 & 140 & 177 & 167 & 138   \\
		1000 & 10 & 58 & 118 & 146 & 131 & 108   \\
		1000 & 50 & 48 & 94  & 114 & 106 & 99    \\
		\hline
		10000 & 5  & 193 & 524 & 906 & $\!\!1000+\!\!$ & $\!\!1000+\!\!$  \\
		10000 & 10 & 162 & 457 & 815 & $\!\!1000+\!\!$& $\!\!1000+\!\!$ \\
		10000 & 50 & 114 & 318 & 704 & 888      & $\!\!1000+\!\!$
	\end{tabu}\quad
	\begin{tabu}{cc|ccccc}
		\multicolumn{2}{c|}{} & \multicolumn{5}{c}{$N$} \\
		 $b$ & $a_\text{max}$ & 4 & 16 & 36 & 64 & 100 \\
		\hline
		 1 & 5  & 18 & 18 & 18 & 18 & 18 \\
		 1 & 10 & 18 & 18 & 19 & 19 & 19 \\
		 1 & 50 & 18 & 18 & 19 & 19 & 19 \\
		\hline
		10 & 5  & 25 & 25 & 23 & 22 & 21 \\
		10 & 10 & 24 & 24 & 23 & 21 & 20 \\
		10 & 50 & 23 & 23 & 23 & 21 & 21 \\
		\hline
		100 & 5  & 39 & 36 & 31 & 28 & 27 \\
		100 & 10 & 38 & 36 & 31 & 27 & 26 \\
		100 & 50 & 32 & 34 & 29 & 27 & 26 \\
		\hline
		1000 & 5  & 51 & 59 & 57 & 57 & 54 \\
		1000 & 10 & 49 & 53 & 54 & 56 & 53 \\
		1000 & 50 & 53 & 57 & 55 & 54 & 51 \\
		\hline
		10000 & 5  & 142 & 232 & 298 & 370 & 408 \\
		10000 & 10 & 121 & 180 & 261 & 320 & 328 \\
		10000 & 50 & 87  & 148 & 224 & 261 & 278
	\end{tabu}
	\caption{Iteration counts for the heterogeneous, elliptic,
          indefinite problem (left) and for the convection--diffusion
          problem with zero-divergence convection coefficient
          \eqref{conv:zero-div} (right), varying the parameter
          $\kappa$ (left) and the constant $b$ (right), as well as
          $a_\text{max} > 1$ and the number of subdomains $N$ for fixed $h = 1/600$.}
	\label{Table:Heterogeneous_nglob600}
\end{table}

In Table~\ref{Table:Coarse_space_size_nglob600} we display the size of
the GenEO coarse space employed, along with comparison figures from
the homogeneous case (labelled $a_\text{max} = 1$). We note that, at
most, only a small number of additional eigenvectors are required in
the coarse space in order to treat the heterogeneity for any
$a_\text{max}$ used, thus showing robustness to contrasts in the
diffusion coefficient. We also see that the size of the coarse space
grows with $N$, however, the average number of eigenvectors per
subdomain decreases with $N$ so that we need to solve only small
eigenvalue problems for a small number of eigenvectors.
\nb{The size of the coarse space is also very small compared to the
global problem size, here ranging from $0.06\%$ to $0.5\%$ as
$N$ increases from 4 to 100.}

In conclusion, the GenEO method is well-equipped to deal with heterogeneities in the coefficient $A$, allowing us to treat the heterogeneous problem similarly to the homogeneous one.

\begin{table}[ht]
	\centering
	\tabulinesep=1.2mm
	\begin{tabu}{c|ccccc}
		& \multicolumn{5}{c}{$N$} \\
		$a_\text{max}$ & 4 & 16 & 36 & 64 & 100 \\
		\hline
		1  & 212 (53.0) & 624 (39.0) & 1060 (29.4) & 1480 (23.1) & 1800 (18.0) \\
		10 & 209 (52.3) & 613 (38.3) & 1046 (29.1) & 1447 (22.6) & 1828 (18.3) \\
		50 & 202 (50.5) & 612 (38.3) & 1055 (29.3) & 1442 (22.5) & 1845 (18.5)
	\end{tabu}
	\caption{The size of the coarse space for the homogeneous
          problem (denoted $a_\text{max} = 1$) and the heterogeneous
          inclusions and channels problem, varying $a_\text{max}$ and
          the number of subdomains $N$ for fixed $h = 1/600$. The average number of eigenvectors per subdomain, namely the size of the coarse space divided by $N$, is given in parenthesis. \nb{The total number of degrees of freedom for this problem is $361,\!201$, yielding coarse space sizes ranging from $0.06\%$ to $0.5\%$ of the global problem size as $N$ increases from 4 to 100.}}
	\label{Table:Coarse_space_size_nglob600}
\end{table}

\subsection{Dependence on mesh refinement}
\label{subsec:numerics-meshrefinement}

In this subsection we explore dependence upon the mesh spacing $h$. We consider problem  \eqref{eq:helmholtz}
with $A$ given as in \eqref{eq:hetcoeff} and we set  $\kappa = 1000$
and $a_\text{max} = 50$. Table~\ref{Table:Indefinite_heterogeneous_kappa1000_amax50} (left)
shows how the iteration count and the size of the coarse space vary with $h$. We
note that the iteration count remains stable as
$h$ decreases and the number $N$ of subdomains grows. On
the other hand, the size of the coarse space appears to
grow at a rate slightly slower than $\mathcal{O}(h^{-1})$ as $h$ is
decreased. This is in full agreement with similar results in
\cite[Table 1]{peter} for positive-definite problems in the case of minimal
overlap $\delta \sim h$.

As in \cite{peter}, we repeat our experiment for the case of
generous overlap $\delta \sim H$. We fix the subdomain size $H$ and
the overlap $\delta$ independent of $h$, in particular to that given
by minimal overlap on the coarsest mesh size $h=\frac1{200}$, such that $\delta
=\frac{1}{100}$. The partition of unity is chosen to vary smoothly
over the entire extent of the overlap. (For details of the implementation
within FreeFem, see \cite[Section 4.1]{jolivet:2012}.) The iteration counts presented
in Table~\ref{Table:Indefinite_heterogeneous_kappa1000_amax50} (right)
for this setup remain stable and are essentially identical to those for minimal overlap
in the left table, but in addition, we now also observe that the size of the coarse space
remains bounded as $h\rightarrow 0$, which is again in agreement with the
results in \cite[Table 1]{peter}.

In all cases, our results show  that the dimension of the coarse space depends on $N$,
and grows approximately as $\mathcal{O}(N^{2/3})$; this means the
number of eigenvectors chosen per subdomain decreases with $N$ at
approximately the rate $\mathcal{O}(N^{-1/3})$. \nb{Further, Table~\ref{Table:Indefinite_heterogeneous_kappa1000_amax50} (below) provides the size of the coarse space as a percentage of the global problem size and we note that this is both small (at most $1.48\%$) and decreases with $h$.}

\begin{table}[t]
	\centering
	\tabulinesep=1.2mm
	\begin{tabu}{c|ccc|ccc}
		& \multicolumn{3}{c|}{$N$} & \multicolumn{3}{c}{$N$}\\
		$h$ & 16 & 36 & 100 & 16 & 36 & 100\\
		\hline
		$\frac{1}{200}$ & 87 (185) & 115 (318) & 90 (598) &
				87 (185) & 115 (318) & 90 (598) \\
		$\frac{1}{400}$ & 89 (419) & 107 (661) & 91 (1214) &
                88 (225) & 109 (356) & 94 (639) \\
		$\frac{1}{600}$ & 94 (612) & 114 (1055) & 99 (1845) &
                92 (227) & 116 (384) & 99 (672) \\
		$\frac{1}{800}$ & 95 (847) & 115 (1329) & 100 (2478) &
                93 (228) & 116 (379) & 100 (680) \\
        \hline
        $\frac{1}{200}$ & $0.46\%$ & $0.79\%$ & $1.48\%$ & $0.46\%$ & $0.79\%$ & $1.48\%$ \\
        $\frac{1}{400}$ & $0.26\%$ & $0.41\%$ & $0.75\%$ & $0.14\%$ & $0.22\%$ & $0.40\%$ \\
        $\frac{1}{600}$ & $0.17\%$ & $0.29\%$ & $0.51\%$ & $0.06\%$ & $0.11\%$ & $0.18\%$ \\
        $\frac{1}{800}$ & $0.13\%$ & $0.21\%$ & $0.39\%$ & $0.04\%$ & $0.06\%$ & $0.11\%$ \\
	\end{tabu}
        %
	\caption{\nb{Above:} Iteration count and size of the coarse space (in brackets) for
          minimal overlap $\delta \sim h$ (left) and for generous
          overlap $\delta \sim H$ (right) for problem \eqref{eq:helmholtz}
          with heterogeneous $A$ as in \eqref{eq:hetcoeff}, varying $h$ and
          $N$ for $\kappa = 1000$ and $a_\text{max} = 50$ fixed. \nb{Below: The size of the coarse space as a percentage of the global problem size.}}
	\label{Table:Indefinite_heterogeneous_kappa1000_amax50}
\end{table}

\subsection{The effect of the convection field}
\label{subsec:numerics-convfield}

For problem \eqref{eq:convdiff} the specific form of $\mathbf{b}$  can play an  important role.
Here we consider some further examples to illustrate this point and show how the divergence of $\mathbf{b}$ can dictate performance, as predicted by the theory.
We start with the  homogeneous case $A = I$,  and with a circulating convection field. Here the problem is posed on $(-1,1)\times(0,1)$ and the convection coefficient is given as
\begin{align}
\label{conv:circulating}
\mathbf{b} = b \, (2y(1-x^2),-2x(1-y^2))^{T},
\end{align}
illustrated in Figure~\ref{fig:conv_circ}. Results varying $b$, which
controls the strength of the convective term, are given in Table
\ref{Table:Convection_circulating_homogeneous_nglob600} (left). As before,
GenEO  behaves robustly  for  $b$ up to at least $1000$.

\begin{figure}[t]
	\centering
	\subfigure[Without oscillations: $\mathbf{b}$ given by \eqref{conv:circulating}.\label{fig:conv_circ}]{\includegraphics[width=0.49\textwidth,trim=1.8cm 0cm 1.8cm 0cm ,clip]{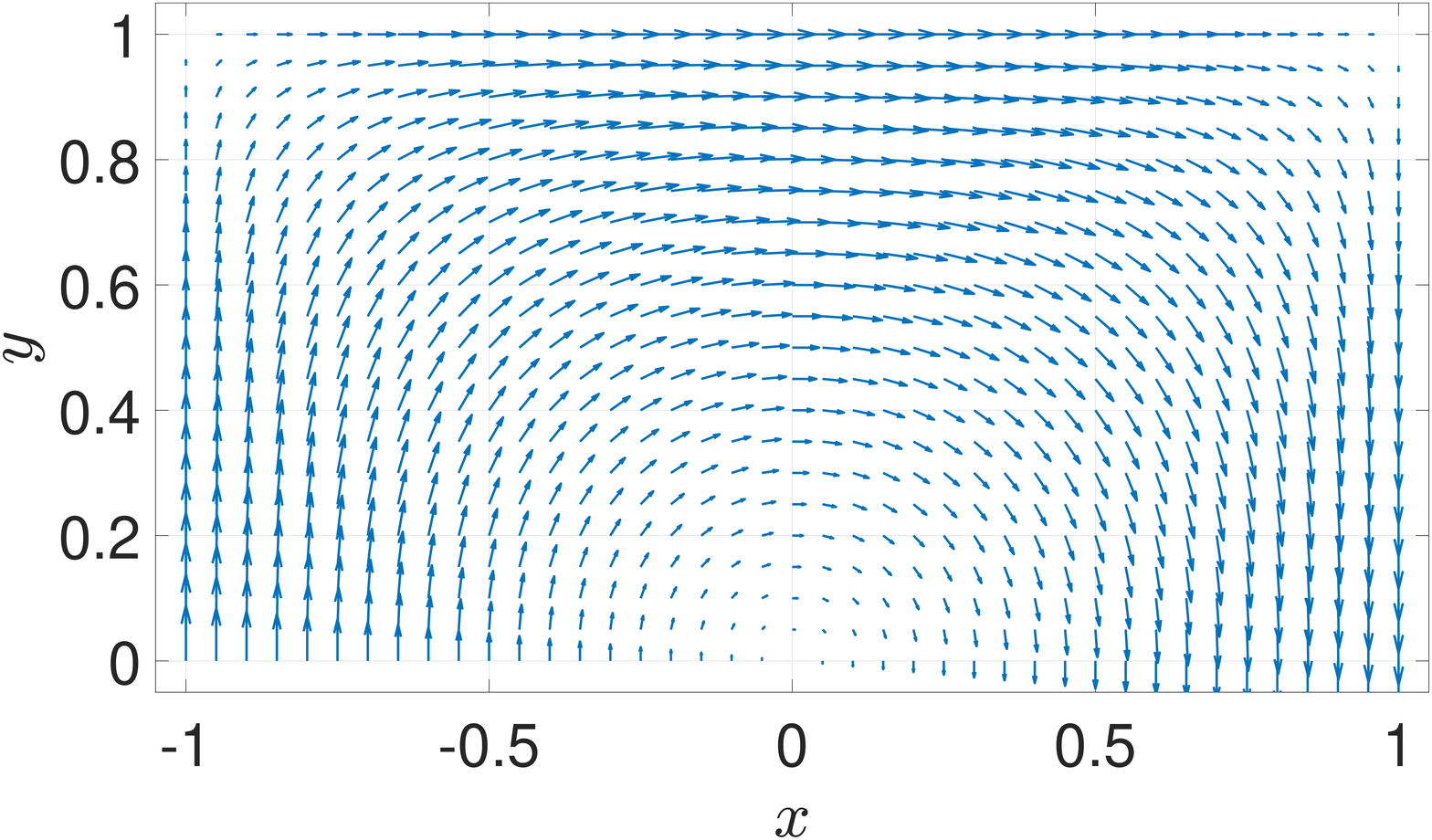}}
	\subfigure[With oscillations: $\mathbf{b}$ given by \eqref{conv:circulating-sine-radial} with $n=4$.\label{fig:conv_circ_osc}]{\includegraphics[width=0.49\textwidth,trim=1.8cm 0cm 1.8cm 0cm ,clip]{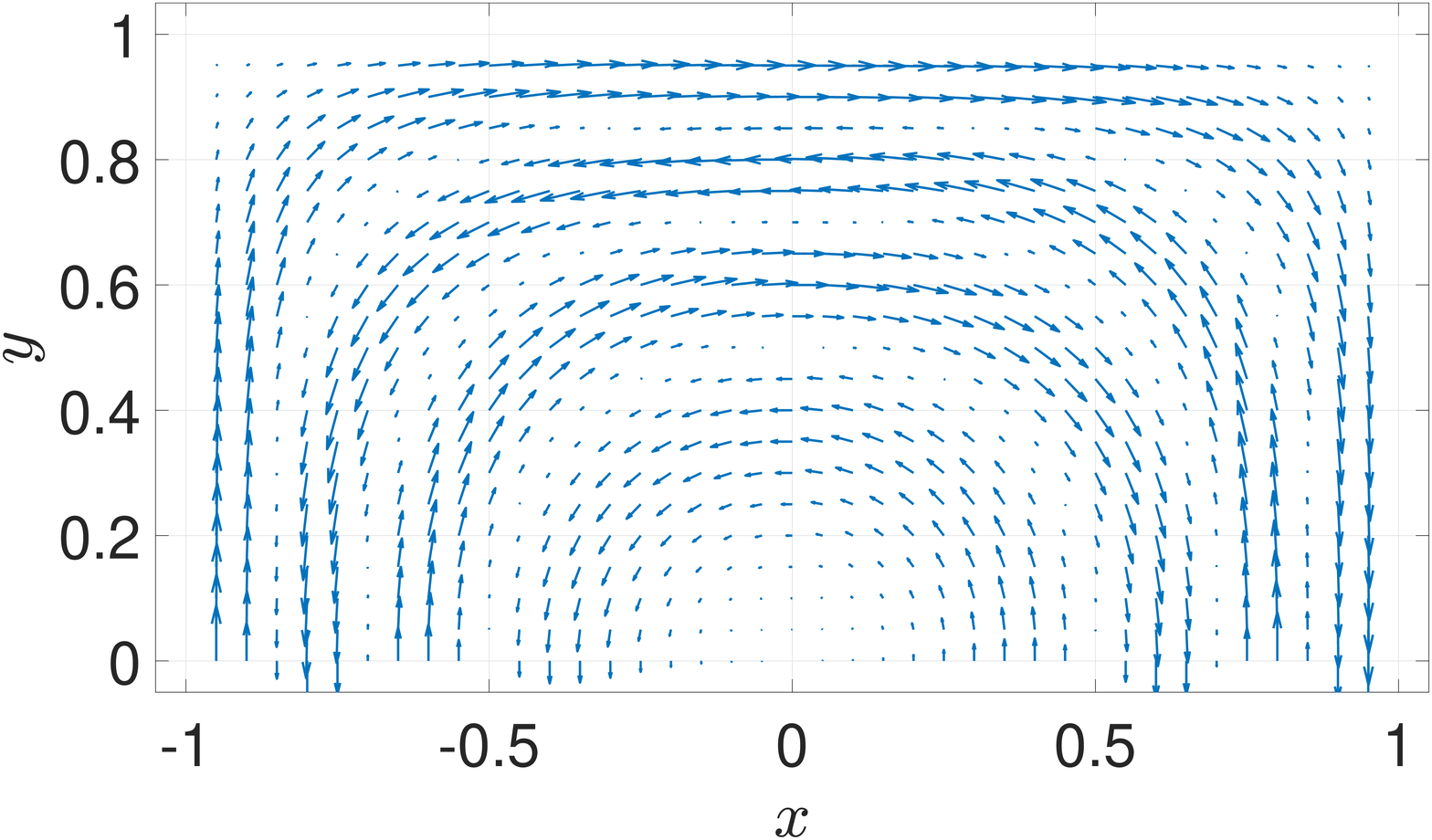}}
	\caption{Circulating convection fields with and without oscillation in the radial direction.}
	\label{fig:conv_fields_circ}
\end{figure}

\begin{table}[t]
	\centering
	\tabulinesep=1.2mm
	\begin{tabu}{c|ccccc}
		& \multicolumn{5}{c}{$N$} \\
		$b$ & 4 & 16 & 36 & 64 & 100 \\
		\hline
		1 & 18 & 18 & 18 & 18 & 18 \\
		10 & 24 & 24 & 22 & 21 & 20 \\
		100 & 43 & 44 & 41 & 36 & 33 \\
		1000 & 31 & 34 & 35 & 35 & 36 \\
		10000 & 143 & 231 & 282 & 314 & 339 \\
		\hline
		$-$ & 316 & 931 & 1584 & 2170 & 2796
	\end{tabu} \qquad\qquad
	\begin{tabu}{c|ccccc}
		& \multicolumn{5}{c}{$N$} \\
		$n$ & 4 & 16 & 36 & 64 & 100 \\
		\hline
		1 & 30 & 33 & 35 & 35 & 36 \\
		2 & 35 & 44 & 49 & 45 & 49 \\
		4 & 44 & 64 & 65 & 57 & 57 \\
		8 & 46 & 66 & 70 & 65 & 60
	\end{tabu}
	\caption{Iteration counts for problem \eqref{eq:convdiff} with
          $A= I$ and  with circulating convection coefficient for
            $h = 1/600$. Left:  Using \eqref{conv:circulating} and
            varying the parameter $b$ and the number of subdomains
            $N$; the coarse space size (independent of $b$) is
            tabulated in the final row. Right: Using
            \eqref{conv:circulating-sine-radial} with radial
            oscillations and $b=1000$, varying the parameter $n$ and the
            number of subdomains $N$.}
	\label{Table:Convection_circulating_homogeneous_nglob600}
\end{table}

As well as varying the strength of the convective term, we can
also add local variation. To this end,  we  add sinusoidal
variation of increasing frequency in the radial direction
(perpendicular to the characteristics
of the convection field). That is, we consider
\begin{align}
\label{conv:circulating-sine-radial}
\mathbf{b} = b \, \sin( n\pi(1-x^2)(1-y^2) ) (2y(1-x^2),-2x(1-y^2))^{T},
\end{align}
for a parameter $n$; see Figure~\ref{fig:conv_circ_osc}. Note that for
any choice of $n$ this convection coefficient is divergence-free
since $\nabla \cdot \mathbf{b} = 0$. Thus, here we
illustrate the effect of local variation without changing the
divergence. Results for $b=1000$ and varying $n$ are given in Table
\ref{Table:Convection_circulating_homogeneous_nglob600} (right),
where we see that iteration counts slightly increase with the
frequency parameter $n$ but that this remains mild and the coarse
space continues to provide a robust preconditioner as $N$ increases.

We now explore the role the local variation of the divergence of
$\mathbf{b}$ plays by considering a simple unidirectional convection
field with sinusoidal variation added in such a way as to vary
$\nabla \cdot \mathbf{b}$. In particular, we consider
\begin{align}
\label{conv:unidirectional-oscillations}
\mathbf{b} = b \, (1+\sin(m\pi(2x+y))) (1+\sin(2\pi(2y-x))) (2,1)^{T},
\end{align}
see Figure~\ref{fig:conv_field_unidir}, on the unit square. In this
case, we also choose the heterogeneous diffusion coefficient $A$ in
\eqref{eq:hetcoeff} to incorporate the full range of heterogeneity.

Here $\mathbf{b}$ includes oscillation both
across characteristics and along characteristics. The term
$(1+\sin(2\pi(2y-x)))$ introduces oscillations that do not
change the strength of the divergence. On the
other hand, the term $(1+\sin(m\pi(2x+y)))$ introduces oscillations
along characteristics and as we increase their frequency through the
parameter $m$ we increase the strength of the divergence locally. This can be
seen visually in Figure~\ref{fig:conv_field_unidir} where we have
$m=4$ oscillations along any unit vector parallel to $(2,1)^{T}$ and
$2$ oscillations along any unit vector perpendicular to
$(2,1)^{T}$. To see how this affects performance we fix $b = 1000$ and
$a_\text{max} = 50$ and display results for increasing $m$ in Table
\ref{Table:Convection_unidirectional_oscillating_heterogeneous_nglob600}. This
time we see that increasing the oscillations has a significant impact
on the iteration counts and the GenEO coarse space starts to
struggle if  $m$ gets too big. This observation aligns with the
theory which suggests that a large divergence can hamper the
effectiveness of the coarse space.

\begin{figure}[t]
	\centering
	\includegraphics[width=0.49\textwidth,trim=1.8cm 0cm 1.8cm 0cm ,clip]{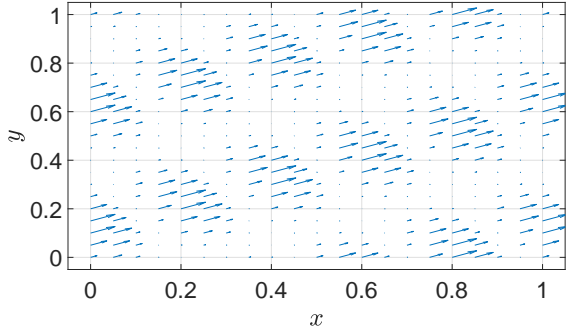}
	\caption{Unidirectional convection field with oscillations: $\mathbf{b}$ given by \eqref{conv:unidirectional-oscillations} with $m=4$.}
    \label{fig:conv_field_unidir}
\end{figure}

\begin{table}[t]
	\centering
	\tabulinesep=1.2mm
	\begin{tabu}{c|ccccc}
		& \multicolumn{5}{c}{$N$} \\
		$m$ & 4 & 16 & 36 & 64 & 100 \\
		\hline
		0 & 53 & 57 & 55 & 54 & 51 \\
		2 & 56 & 85 & 88 & 95 & 78 \\
		4 & 50 & 97 & 143 & 162 & 123 \\
		8 & 56 & 79 & 171 & 133 & 181 \\
	\end{tabu}
	\caption{Iteration counts for the heterogeneous convection--diffusion problem with oscillating unidirectional convection coefficient \eqref{conv:unidirectional-oscillations}, varying the parameter $m$ (which controls the size of $\nabla \cdot \mathbf{b}$) and the number of subdomains $N$, with fixed $h = 1/600$, $b = 1000$ and $a_\text{max} = 50$.}
	\label{Table:Convection_unidirectional_oscillating_heterogeneous_nglob600}
\end{table}

To summarise, the GenEO coarse space is able to handle variations in the convection field to an extent. However, should $\nabla \cdot \mathbf{b}$ become too large then the approach can lose robustness.

\subsection{Time evolution for convection--diffusion problems}
\label{subsec:numerics-timestepping}

Finally, we also include one experiment on time evolution for
convection--diffusion problems. The
underlying time-dependent problem for $u(\mathbf{x},t)$ over the
time-interval $(0,T)$ is given by
\begin{subequations}
\label{eq:convdiff_timedependent}
\begin{align}
\frac{\partial u}{\partial t}-\mathrm{div}( A \nabla u) + \mathbf{b}\cdot\nabla u + \kappa u &= g & & \text{in } \Omega \times (0,T), \\
u &= 0 & & \text{on } \partial\Omega \times (0,T), \\
u &= u_0 & & \text{on } \Omega \times \left\lbrace 0 \right\rbrace.
\end{align}
\end{subequations}
Consider using an implicit scheme in time to yield a semi-discrete
problem. In particular, by way of example, we consider the backward
Euler scheme with time-step $\Delta t > 0$ and set (without loss of generality) $\kappa = 0$, yielding
\begin{subequations}
\label{eq:convdiff_semidiscrete}
\begin{align}
-\mathrm{div}( A \nabla u^{n}) + \mathbf{b}\cdot\nabla u^{n} + \frac{1}{\Delta t} u^{n} &= g^{n} + \frac{1}{\Delta t} u^{n-1} & & \text{in } \Omega, \\
u^{n} &= 0 & & \text{on } \partial\Omega,
\end{align}
\end{subequations}
where $u^{n}(\mathbf{x}) \approx u(\mathbf{x},n \Delta t)$ and $u^0 =
u_0$ with a similar convention for $g$. The canonical expression of the semi-discrete problem at each time-step takes the form
\begin{subequations}
\label{eq:convdiff_timestepping}
\begin{align}
-\mathrm{div}( A \nabla u) + \mathbf{b}\cdot\nabla u + \frac{1}{\Delta t} u &= f & & \text{in } \Omega, \\
u &= 0 & & \text{on } \partial\Omega,
\end{align}
\end{subequations}
for some $f$ and it is this generic problem we consider solving. Other implicit time-stepping schemes will lead to similar PDE problems at each time-step, so this investigation covers the general case.

Since $1/\Delta t > 0$, it could be put completely into the
coefficient  $c^+$, as defined in \eqref{eq:splitc}, and thus used in
the definition of the GenEO coarse space. However, to allow better efficiency in
the case of variable time-stepping it would be better to build a
preconditioner that is independent of  the time-step size. So  we suppose that
a `median time-step' $\Delta t_0$ is chosen and we perform the splitting \eqref{eq:splitc} with
$$ c^+ = 1/\Delta t_0 \quad \text{and} \quad c^{-} =  1/\Delta t  - 1/\Delta t_0. $$
With this choice, the GenEO preconditioner is built---once only,
before time-stepping takes place---incorporating the
$1/\Delta t_{0}$ term, i.e., using the bilinear form
\begin{align*}
a_{\Omega_{i}}(u,v) &= \int_{\Omega_{i}}\left(A\nabla u\cdot{\nabla v}\, + \frac{1}{\Delta t_{0}} u v\right) dx.
\end{align*}
We  study the utility of this choice
by varying parameters $\Delta t_0$ and $\Delta t$.

\begin{table}[t]
	\centering
	\tabulinesep=1.2mm
	\begin{tabu}{c|ccccc}
		 & \multicolumn{5}{c}{$N$} \\
		$\Delta t$ & 4 & 16 & 36 & 64 & 100 \\
		\hline
		1000 & 53/53/44  & 57/56/49  & 55/55/51 &
                54/54/51 & 51/51/49 \\
	        0.1 & 53/53/44 & 57/56/49 & 55/55/51 & 54/54/51 & 51/51/49 \\
		0.001 & 50/50/43  & 53/53/48 & 53/53/50 &
                52/52/50 & 50/50/48\\
	\end{tabu}
	\caption{Iteration counts with $\Delta
          t_{0} = 10$ (left), $0.1$ (middle), and $0.001$ (right) for
          one time-step \eqref{eq:convdiff_timestepping} of backward Euler applied
          to the time-dependent convection--diffusion
          problem with heterogeneous diffusion coefficient \eqref{eq:hetcoeff}
          and unidirectional convection coefficient
          \eqref{conv:zero-div}, varying the time-step
          $\Delta t$ and the number of subdomains $N$, for fixed $h =
          1/600$, $b = 1000$ and $a_\text{max} = 50$.}
	\label{Table:Convection_time-stepping_unidirectional_oscillating_m0n2_heterogeneous_nglob600}
\end{table}
We take $\Omega = (0,1)^2$, $A$ as in \eqref{eq:hetcoeff}, and consider the divergence-free convection coefficient \eqref{conv:zero-div}, fixing $b = 1000$ and $a_\text{max} = 50$.
Results are given in Table~\ref{Table:Convection_time-stepping_unidirectional_oscillating_m0n2_heterogeneous_nglob600}
where we see that the GenEO preconditioner is robust to changes in
both $\Delta t_0$ and $\Delta t$ (with either $\Delta t \le \Delta t_0 $ or $\Delta t > \Delta t_0 $), thus providing
an effective and efficient preconditioner for implicit time-stepping of
non-self adjoint or indefinite time-dependent problems.

\subsection{Alternative two-level preconditioners and GenEO-type coarse spaces}
\label{subsec:alternative-geneo}

The theory in this paper and the numerical results above are for the
classical, additive two-level Schwarz preconditioner with the
GenEO coarse space in \eqref{eq:2-19}. The underlying generalised
eigenproblem in \eqref{eq:2-18} is based on the self-adjoint and positive-definite
subproblem \eqref{eq:2-4}, which inevitably ceases to provide a robust
preconditioner when the indefiniteness or non-self-adjointness is very strong.

Especially for indefinite problems, such as
\eqref{eq:helmholtz}, the \emph{Restricted Additive Schwarz} (RAS)
preconditioner \cite{cai_sarkis}
along with a deflation approach to incorporate
the coarse space has previously been shown to lead  to
significantly lower iteration counts and better robustness with respect
to the indefiniteness \cite{bootland:2021:GCS,bootland:2021:ACS}.

For the remainder of this section we return to problem \eqref{eq:helmholtz},
and start by repeating the
experiments  reported in Table~\ref{Table:Heterogeneous_nglob600} (left),
but this time we use RAS instead of additive Schwarz for the one-level part of the preconditioner,
and we combine this with the coarse solve using a deflation approach. Otherwise the setup is identical to that in Table~\ref{Table:Heterogeneous_nglob600} (left). See, e.g., \cite{bootland:2021:GCS} for more details of this `RAS plus deflation' algorithm.
Results for this `RAS plus deflation' approach are given in Table
\ref{Table:RAS} (right) which can be compared with the corresponding  results from Table~\ref{Table:Heterogeneous_nglob600}  (given again here on the left for convenience). We see that the `RAS plus deflation' algorithm
requires roughly half the number of iterations needed by the standard algorithm,
but it does not yield any significantly improved
robustness with respect to the indefiniteness.
\begin{table}[t]
\centering
\tabulinesep=1.2mm
\begin{tabu}{cc|cccc|cccc}
	& & \multicolumn{8}{c}{$N$} \\
	$\kappa$ & $a_\text{max}$ & 16 & 36 & 64 & 100 & 16 & 36 & 64 & 100 \\
	\hline
	10 & 5 & 18 & 18 & 19 & 19     & 9 & 9 & 9 & 9 \\
	10 & 10 & 18 & 19 & 19 & 19      & 9 & 9 & 9 & 9 \\
	10 & 50 & 18 & 20 & 19 & 19     & 9 & 9 & 9 & 9 \\
	\hline
	100 & 5 &  30 & 26 & 23 & 22     & 11 & 10 & 10 & 9 \\
	100 & 10 & 26 & 24 & 23 & 22     & 11 & 10 & 10 & 9 \\
	100 & 50 & 22 & 22 & 20 & 20     & 9 & 9 & 9 & 9 \\
	\hline
	1000 & 5 & 140 & 177 & 167 & 138     & 62 & 76 & 70 & 57 \\
	1000 & 10 & 118 & 146 & 131 & 108     & 50 & 62 & 55 & 45 \\
	1000 & 50 & 94  & 114 & 106 & 99     & 38 & 46 & 43 & 39 \\
	\hline
	10000 & 5 & 524 & 906 & $\!\!1000+\!\!$ & $\!\!1000+\!\!$ &   297 & 504 & 628 & 753 \\
	10000 & 10 & 457 & 815 & $\!\!1000+\!\!$& $\!\!1000+\!\!$ &    234 & 445 & 548 & 644 \\
	10000 & 50 & 318 & 704 & 888      & $\!\!1000+\!\!$ &    171 & 372 & 464 & 538
\end{tabu}
\caption{Iteration counts for problem \eqref{eq:helmholtz} with
heterogeneous $A$ as in \eqref{eq:hetcoeff}, using the GenEO coarse space based on
\eqref{eq:2-18} and varying $\kappa$, $a_\text{max} > 1$ and $N$
with $h = 1/600$ fixed. Left: Classical two-level, additive Schwarz.
Right: Restricted additive Schwarz and deflation.}
\label{Table:RAS}
\end{table}

In \cite{bootland:2021:GCS} we also compared the GenEO coarse space
based on the eigenproblem \eqref{eq:2-18}, denoted $\Delta$-GenEO
in \cite{bootland:2021:GCS}, with an alternative approach first
introduced for the Helmholtz problem in~\cite{bootland:2021:ACS},
going under the name of $\mathcal{H}$-GenEO.
\begin{definition}\label{def:5-12}
($\mathcal{H}$-GenEO eigenproblem \iggr{for problem \eqref{eq:helmholtz}} ). For each $j=1,\ldots,N$, we define the following generalized eigenvalue problem
\begin{equation}\label{eq:5-12}
\text{Find} \quad (p, \lambda) \in {\tV}_j\backslash \{0\} \times \mathbb{R} \quad \text{such that} \quad      b_{\Omega_{j}}(p,v) = \lambda \,a_{\Omega_{j}}(\Xi_{j}(p), \Xi_{j}(v)),\quad {\rm for}\;\,{\rm all}\;\,v\in \widetilde{V}_{j},
\end{equation}
where $b_{\Omega_j}(\cdot,\cdot)$ \iggr{is the bilinear form corresponding to the indefinite  operator $-\Delta - \kappa$}  and $\Xi_{j}$ is the local partition of unity operator from Definition~\ref{def:2-0}.
\end{definition}

Recalling \eqref{eq:2-18},  we see that the key difference in $\mathcal{H}$-GenEO compared with $\Delta$-GenEO is the
use  of the bilinear form $b_{\Omega_j}(\cdot,\cdot)$ of the underlying
indefinite problem on the left-hand side. Note that this also implies that
the eigenvalues $\lambda$ in \eqref{eq:5-12} {can be negative}.
Thus,
when incorporating eigenvectors into the coarse space using the
threshold $\lambda < \lambda_\text{max}$ we also include all
negative eigenvalues (see again \cite{bootland:2021:GCS} for  details).

In our final set of results, Table~\ref{Table:H-GenEO}, we repeat the
experiments in Table~\ref{Table:RAS}
using the GenEO coarse space based on the $\mathcal{H}$-GenEO
eigenproblem in \eqref{eq:5-12}, again with an otherwise identical
setup. We see that for small $\kappa$ the iteration counts are almost
identical. For large $\kappa$, we see that $\mathcal{H}$-GenEO leads to
much lower iteration counts, such that GMRES
converges in less than $50$ iterations even for $\kappa = 10000$ when
using the `RAS plus deflation' approach. Unfortunately, a theoretical
justification for this excellent performance of the $\mathcal{H}$-GenEO
method is still lacking. We note that for larger $\kappa$ the coarse
space induced by the
$\mathcal{H}$-GenEO eigenproblem in \eqref{eq:5-12} is slightly larger
than that induced by the eigenproblem in \eqref{eq:2-18}
(see \cite[Table 2]{bootland:2021:GCS}). Nonetheless, this
increase is relatively modest compared to the
reduction in iteration counts that can be achieved.

\begin{table}[t]
\centering
\tabulinesep=1.2mm
\begin{tabu}{cc|cccc|cccc}
	& & \multicolumn{8}{c}{$N$} \\
	$\kappa$ & $a_\text{max}$ & 16 & 36 & 64 & 100 & 16 & 36 & 64 & 100 \\
	\hline
	10 & 5 & 18 & 18 & 19 & 19 & 9 & 9 & 9 & 9  \\
	10 & 10 & 18 & 19 & 19 & 19 & 9 & 9 & 9 & 9   \\
	10 & 50 & 18 & 19 & 19 & 19 & 9 & 9 & 9 & 9    \\
	\hline
	100 & 5 & 18 & 19 & 19 & 19 & 9 & 9 & 9 & 9  \\
	100 & 10 & 18 & 19 & 19 & 19 & 9 & 9 & 9 & 9 \\
	100 & 50 & 18 & 19 & 19 & 19 & 8 & 9 & 9 & 9  \\
	\hline
	1000 & 5 & 43 & 36 & 29 & 32 & 12 & 12 & 11 & 12  \\
	1000 & 10 & 36 & 43 & 23 & 24 & 11 & 11 & 10 & 10 \\
	1000 & 50 & 28 & 31 & 24 & 26 & 12 & 11 & 11 & 12  \\
	\hline
	10000 & 5 & 134 & 173 & 192 & 258 & 35 & 35 & 43 & 45   \\
	10000 & 10 & 113 & 147 & 191 & 192 & 21 & 29 & 41 & 34  \\
	10000 & 50 & 92 & 109 & 173 & 182 & 19 & 25 & 48 & 26
\end{tabu}
\caption{Iteration counts for problem \eqref{eq:helmholtz} with
  heterogeneous $A$ as
  in \eqref{eq:hetcoeff}, using the $\mathcal{H}$-GenEO coarse space based on
  \eqref{eq:5-12} and varying $\kappa$, $a_\text{max} > 1$ and $N$
  with $h = 1/600$ fixed. Left: Classical two-level, additive Schwarz.
  Right: Restricted additive Schwarz and deflation.}
\label{Table:H-GenEO}
\end{table}

These results provide an insight into the power of using generalised
eigenproblems to yield coarse spaces for challenging indefinite or
non-self-adjoint problems, which has yet to be fully explored even
numerically. A number of theoretical obstacles remain in order to analyse
the effect of such GenEO-type methods, but theory in this direction may
provide a greater understanding of where such approaches can be
successful and how to define appropriate, problem-dependent
generalised eigenproblems to incorporate the salient properties of the
underlying problem.



\begin{thebibliography}{99}
\small

\bibitem{giraud} {\sc E. Agullo, L. Giraud, and L. Poirel},
{\it Robust preconditioners via generalized eigenproblems for hybrid sparse linear solvers},
SIAM J. Matrix Anal. Appl. 40, 417--439, 2019.

\bibitem{babuska} {\sc I. Babuska and R. Lipton}, {\it Optimal local approximation spaces for generalized finite element
methods with application to multiscale problems},
Multiscale Model. Simul. 9, 373--406, 2011.

\bibitem{peter}
{\sc P. Bastian, R. Scheichl, L. Seelinger and A. Strehlow},
{\it Multilevel spectral domain decomposition}, to appear in SIAM
J. Sci. Comput., \url{https://arxiv.org/abs/2106.06404}, 2021.

\bibitem{bootland:2021:OTD}
{\sc N. Bootland and V. Dolean},
{\it On the {D}irichlet-to-{N}eumann coarse space for solving the {H}elmholtz problem using domain decomposition.}
In: Numerical {M}athematics and {A}dvanced {A}pplications {ENUMATH} 2019, \emph{Lect. Notes Comput. Sci. Eng.}, vol. 139, pp. 175--184. Springer, Cham, 2021.

\bibitem{bootland:2021:GCS}
{\sc N. Bootland, V. Dolean, I. G. Graham, C. Ma and R. Scheichl},
{\it GenEO coarse spaces for heterogeneous indefinite elliptic problems}, to appear in Proc. 26th Int. Domain Decomposition Conference (2020), \url{https://arxiv.org/abs/2103.16703}, 2021.

\bibitem{bootland:2021:ACS}
{\sc N. Bootland, V. Dolean, P. Jolivet and {P.}-{H.} Tournier}, {\it A comparison of coarse spaces for Helmholtz problems in the high frequency regime}, Comput. Math. Appl. 98, 239--253, 2021.

\bibitem{brenner}
{\sc S. Brenner and R. Scott}, {\it The Mathematical Theory of Finite Element Methods}, Springer Science \& Business Media, 2007.

\bibitem{cai_sarkis}
{\sc X.C. Cai and M. Sarkis}, {\it  A restricted additive Schwarz
preconditioner for general sparse linear systems}, SIAM J. Sci.
Comput. 21, 239--247, 1999.

\bibitem{cai}
{\sc X.C. Cai and O.B. Widlund}, {\it Domain decomposition algorithms for indefinite elliptic problems}, SIAM J. Sci. Stat. Comput. 13(1), 243--258, 1992.

\bibitem{cai_zou}  {\sc X.C. Cai and J. Zou},
{\it Some observations on the $l_2$ convergence of the additive Schwarz preconditioned GMRES method},
Numer. Linear Algebra Appl. 9, 379--397, 2002.

\bibitem{conen:2014}
{\sc L. Conen, V. Dolean, R. Krause and  F. Nataf}, {\it A coarse space for heterogeneous Helmholtz problems based on the Dirichlet-to-Neumann operator},
J. Comput. Appl. Math. 271, 83--99, 2014.

\bibitem{dolean:2015}
{\sc V. Dolean, P. Jolivet, and F. Nataf},
{\it An Introduction to Domain Decomposition Methods:
Algorithms, Theory and Parallel Implementation}, SIAM, Philadelphia, 2015.

\bibitem{efendiev}
{\sc Y. Efendiev, J. Galvis and T.-Y. Hou},
{\it Generalized multiscale finite element methods (GMsFEM)},
J. Comput. Phys. 251, 116--135, 2013.

\bibitem{elman}
{\sc S.C. Eisenstat, H.C. Elman, and M.H. Schultz}, {\it Variational iterative methods for nonsymmetric systems of linear equations}, SIAM J. Numer. Anal. 20, 345--357, 1983.

\bibitem{galvis:2018:ODD}
{\sc J. Galvis, E.T. Chung, Y. Efendiev and W.T. Leung},
{\it On Overlapping Domain Decomposition Methods for High-Contrast Multiscale Problems}.
In: P. Bj{\o}rstad  et al. (eds), Domain Decomposition Methods in Science and Engineering XXIV, \emph{Lect. Notes Comput. Sci. Eng.}, vol 125, pp. 45--57. Springer, Cham, 2018.

\bibitem{galvis1} {\sc J. Galvis and Y. Efendiev}, {\it  Domain decomposition preconditioners for multiscale flows in high-contrast media}, Multiscale Model. Simul. 8, 1461--1483, 2010.

\bibitem{galvis2} {\sc J. Galvis and Y. Efendiev}, {\it Domain decomposition preconditioners for multiscale flows in high contrast media: reduced dimension coarse spaces}, Multiscale Model. Simul. 8, 1621--1644, 2010.

\bibitem{gilbarg}
{\sc D. Gilbarg and N.S. Trudinger}, {\it Elliptic Partial Differential Equations of Second Order}, Springer-Verlag, 2001.

\bibitem{shihua} {\sc S. Gong, I.G. Graham and E.A. Spence}, {\it
Domain decomposition preconditioners for high-order discretisations of the heterogeneous Helmholtz equation},
IMA J. Numer. Anal. 41, 2139--2185, 2021.

\bibitem{pasciak}
{\sc J. Gopalakrishnan and J. Pasciak}, {\it Overlapping Schwarz
preconditioners for indefinite time harmonic Maxwell equations},
Math. Comput. 72(241), 1--15, 2001.

\bibitem{ivanrob} {\sc I.G. Graham, P.O. Lechner, and R. Scheichl},
{\it Domain decomposition for multiscale PDEs},
Numer. Math. 106, 589--626, 2007.

\bibitem{ivan} {\sc I.G. Graham, E.A. Spence and E. Vainikko}, {\it Domain decomposition preconditioning for high-frequency Helmholtz problems with absorption}, Math. Comput. 86, 2089-2127, 2017.

\bibitem{hack}
{\sc W. Hackbusch}, {\it Elliptic Differential Equations Theory and Numerical Treatment}, Second Edition, Springer, 2017.

\bibitem{hecht:2012}
{\sc F. Hecht}, {\it New development in {FreeFem++}}, J. Numer. Math., 20(3--4), 251--266, 2012.

\bibitem{klawonn} {\sc A. Heinlein, A. Klawonn, J.  Knepper, and O. Rheinbach},
{\it Adaptive GDSW coarse spaces for overlapping Schwarz methods in three dimensions},
SIAM J. Sci. Comput. 41, A3045--A3072, 2019.

\bibitem{jolivet:2012}
{\sc P. Jolivet, V. Dolean, F. Hecht, F. Nataf, C. Prud'Homme, and N. Spillane}, {\it High performance domain decomposition methods on massively parallel architectures with {FreeFEM++}}, J. Numer. Math., 20(3--4), 287--302, 2012.

\bibitem{arpack:1998}
{\sc R. Lehoucq, D. Sorensen, and C. Yang},
{\it {ARPACK} users' guide: solution of large-scale eigenvalue problems with implicitly restarted {A}rnoldi methods}, SIAM, Philadelphia, 1998.

\bibitem{Leoni}
{\sc G. Leoni}, {\it A First Course in Sobolev Spaces}, American Mathematical Soc., 2017.

\bibitem{ma1}
{\sc C. Ma, R. Scheichl and T. Dodwell}, {\it Novel design and analysis
of generalized FE methods based on locally optimal spectral approximations},
\url{https://arxiv.org/abs/2103.09545}, 2021.

\bibitem{ma2}
{\sc C. Ma and R. Scheichl}, {\it Error estimates for fully discrete
generalized FEMs with locally optimal spectral approximations},
\url{https://arxiv.org/abs/2107.09988}, 2021.

\bibitem{nataf:2021}
{\sc F. Nataf and {P.}-{H.} Tournier}, {\it A GenEO domain decomposition method for saddle point problems},
\url{https://hal.archives-ouvertes.fr/hal-02343808}, 2021.

\bibitem{nataf:2011}
{\sc F. Nataf, H. Xiang, V. Dolean, and N. Spillane}, {\it A coarse space construction based on local Dirichlet-to-Neumann maps}, SIAM J. Sci. Comput., 33(4), 1623--1642, 2011.

\bibitem{schatz}
{\sc A.H. Schatz}, {\it An observation concerning Ritz--Galerkin
methods with indefinite bilinear forms}, Math. Comput., 28(128), 959--962, 1974.

\bibitem{schatz-1}
{\sc A.H. Schatz and J.P. Wang}, {\it Some new error estimates for
Ritz--Galerkin methods with minimal regularity assumptions},
Math. Comput., 213(65), 19--27, 1996.

\bibitem{smith}
{\sc B.F. Smith, P. Bjorstad, and W. Gropp}, {\it Domain Decomposition: Parallel Multilevel Methods for Elliptic Partial Differential Equations}, Cambridge University Press, 1996.

\bibitem{spillane:2021}
{\sc N. Spillane}, {\it Toward a new fully algebraic preconditioner for symmetric positive definite problems}, \url{https://hal.archives-ouvertes.fr/hal-03187092}, 2021.

\bibitem{spillane:2014}
{\sc N. Spillane, F. Nataf, V. Dolean, P. Hauret, C. Pechstein, and R. Scheichl},
{\it Abstract robust coarse spaces for systems of PDEs via generalized eigenproblems in the overlaps},
Numer. Math. 4(126), 741--740, 2014.

\bibitem{toselli}
{\sc A. Toselli and O. Widlund}, {\it Domain Decomposition Methods: Algorithms and Theory}, Springer Series in Computational Mathematics. Springer-Verlag, 2005.

\end{thebibliography}
\end{document}